
\documentclass{amsart}
\linespread{1.2}
\usepackage[a4paper,hmargin={2.5cm,2.5cm},vmargin={2.5cm,2.5cm},heightrounded,marginparwidth=2.3cm,marginparsep=0.05cm]{geometry}

\usepackage[utf8]{inputenc}

\usepackage[square]{natbib} 
\usepackage[leqno]{amsmath}
\usepackage{amssymb,amsthm}
\usepackage[mathscr]{euscript}
\usepackage{bbm}
\usepackage{dsfont}
\usepackage{stmaryrd} 
\usepackage{enumitem}
\usepackage[all]{xy}
\usepackage{mathtools}

\makeatletter
\def\namedlabel#1#2{\begingroup
    #2%
    \def\@currentlabel{#2}%
    \phantomsection\label{#1}\endgroup
}
\makeatother


\theoremstyle{plain}
\newtheorem{theorem}{Theorem}[section]
\newtheorem{lemma}[theorem]{Lemma}
\newtheorem{proposition}[theorem]{Proposition}
\newtheorem{corollary}[theorem]{Corollary}

\theoremstyle{definition}
\newtheorem{definition}[theorem]{Definition}
\newtheorem{examples}[theorem]{Examples}
\newtheorem{example}[theorem]{Example}
\newtheorem{assumption}[theorem]{Assumption}

\theoremstyle{remark}
\newtheorem{remark}[theorem]{Remark}
\newtheorem{remarks}[theorem]{Remark}

\newlist{tfae}{enumerate}{1}
\setlist[tfae,1]{label=(\roman*)}

\usepackage{chngcntr}
\counterwithin*{equation}{section}


\newcommand{\fw}{\mathfrak{w}}
\newcommand{\fx}{\mathfrak{x}}
\newcommand{\fy}{\mathfrak{y}}

\newcommand{\calA}{\mathcal{A}}

\newcommand{\fX}{\mathfrak{X}}


\DeclareMathOperator{\upc}{\uparrow\!}
\DeclareMathOperator{\downc}{\downarrow\!}
\DeclareMathOperator{\ev}{ev}
\DeclareMathOperator{\supp}{supp}
\DeclareMathOperator{\Zero}{\mathcal{Z}}
\DeclareMathOperator{\Anti}{\mathcal{A}}
\DeclareMathOperator{\im}{im}

\newcommand{\ouc}[1]{\upc\overline{#1}}

\DeclareMathOperator{\Mnd}{Mon}
\DeclareMathOperator{\LaxMnd}{LaxMon}
\DeclareMathOperator{\CoAlg}{CoAlg}
\DeclareMathOperator{\Fix}{Fix}

\newcommand{\catfont}[1]{\mathsf{#1}}

\newcommand{\catX}{\catfont{X}}
\newcommand{\catA}{\catfont{A}}
\newcommand{\catB}{\catfont{B}}

\newcommand{\SET}{\catfont{Set}}
\newcommand{\REL}{\catfont{Rel}}
\newcommand{\ORD}{\catfont{Ord}}
\newcommand{\MET}{\catfont{Met}}
\newcommand{\UMET}{\catfont{UMet}}
\newcommand{\BMET}{\catfont{BMet}}
\newcommand{\TOP}{\catfont{Top}}
\newcommand{\ORDCH}{\catfont{OrdComp}}
\newcommand{\POSCH}{\catfont{PosComp}}
\newcommand{\POSCHDIST}{\catfont{PosCompDist}}
\newcommand{\STCOMP}{\catfont{StComp}}
\newcommand{\STONE}{\catfont{BooSp}}
\newcommand{\COMPHAUS}{\catfont{CompHaus}}
\newcommand{\COMPHAUSREL}{\catfont{CompHausRel}}
\newcommand{\APP}{\catfont{App}}
\newcommand{\SLAT}{\catfont{SLat}}
\newcommand{\BOOL}{\catfont{Boo}}
\newcommand{\BAO}{\catfont{BAO}}

\newcommand{\Rels}[1]{#1\text{-}\catfont{Rel}}
\newcommand{\Cats}[1]{#1\text{-}\catfont{Cat}}
\newcommand{\Reps}[1]{#1\text{-}\catfont{Rep}}

\newcommand{\Tens}[1]{#1\text{-}\catfont{CoPow}}
\newcommand{\FinCoCts}[1]{#1\text{-}\catfont{FinCoCts}}
\newcommand{\FinSup}[1]{#1\text{-}\catfont{FinSup}}
\newcommand{\POSCHDists}[1]{#1\text{-}\catfont{PosCompDist}}
\newcommand{\RepDists}[1]{#1\text{-}\catfont{RepDist}}

\newcommand{\two}{\catfont{2}}
\newcommand{\V}{\mathcal{V}}

\newcommand{\op}{\mathrm{op}}
\newcommand{\sep}{\mathrm{sep}}
\newcommand{\cc}{\mathrm{cc}}
\newcommand{\bool}{\mathrm{boo}}

\newcommand{\luk}{\odot}


\newcommand{\relto}{\mathrel{\mathmakebox[\widthof{$\xrightarrow{\rule{1.45ex}{0ex}}$}]
{\xrightarrow{\rule{1.45ex}{0ex}}\hspace*{-2.4ex}{\mapstochar}\hspace*{1.8ex}}}} 
\newcommand{\modto}{\mathrel{\mathmakebox[\widthof{$\xrightarrow{\rule{1.45ex}{0ex}}$}]
{\xrightarrow{\rule{1.45ex}{0ex}}\hspace*{-2.8ex}{\circ}\hspace*{1ex}}}} 

\newcommand\adjunctop[2]{\xymatrix@=8ex{\ar@{}[r]|{\bot}\ar@<1mm>@/^2mm/[r]^{{#2}} & \ar@<1mm>@/^2mm/[l]^{{#1}}}}
\newcommand\adjunct[2]{\xymatrix@=8ex{\ar@{}[r]|{\top}\ar@<1mm>@/^2mm/[r]^{{#2}} & \ar@<1mm>@/^2mm/[l]^{{#1}}}}


\newcommand{\monadfont}[1]{\mathbbm{#1}}

\newcommand{\mT}{\monadfont{T}}
\newcommand{\mD}{\monadfont{D}}

\newcommand{\mU}{\monadfont{U}}
\newcommand{\mP}{\monadfont{P}}
\newcommand{\mV}{\monadfont{V}}

\newcommand{\monad}{(T,m,e)}
\newcommand{\umonad}{(U,m,e)}
\newcommand{\vmonad}{(V,m,e)}

\newcommand{\theoryfont}[1]{\mathscr{#1}}
\newcommand{\thU}{\theoryfont{U}}

\newcommand{\utheory}{(\mU,[0,1],\xi)}


\newcommand{\Mon}{\ref{Mon}}
\newcommand{\Act}{\ref{Act}}
\newcommand{\Sup}{\ref{Sup}}
\newcommand{\LTen}{\ref{LTen}}
\newcommand{\Ten}{\ref{Ten}}
\newcommand{\Top}{\ref{Top}}
\newcommand{\Min}{\ref{Min}}

\newcommand{\CondA}{\ref{CondA}}


\newcommand{\doo}[1]{\overset{\centerdot}{#1}}

\newcommand{\df}[1]{\emph{\textbf{#1}}}

\newcommand{\field}[1]{\mathds{#1}}

\newcommand{\N}{\field{N}}

\newcommand{\R}{\field{R}}

\title{Enriched Stone-type dualities}

\author{Dirk Hofmann}

\author{Pedro Nora}

\thanks{Partial financial assistance by Portuguese funds through CIDMA
  (Center for Research and Development in Mathematics and
  Applications), and the Portuguese Foundation for Science and
  Technology (``FCT -- Funda\c{c}\~ao para a Ci\^encia e a
  Tecnologia''), within the project UID/MAT/04106/2013 is gratefully
  acknowledged. Pedro Nora is also supported by FCT grant
  SFRH/BD/95757/2013}

\address{Center for Research and Development in Mathematics and
  Applications, Department of Mathematics, University of Aveiro,
  3810-193 Aveiro, Portugal}

\email{dirk@ua.pt}
\email{a28224@ua.pt}

\date{\today}

\subjclass[2010]{%
03G10, 
18A40, 
18B10, 
18C15, 
18C20, 
18D20, 
54H10  
}

\keywords{Dual equivalence, monad, Kleisli construction, Vietoris functor, ordered compact Hausdorff space, quantale-enriched category}

\usepackage{hyperref} 

\begin{document}
\begin{abstract}
  A common feature of many duality results is that the involved
  equivalence functors are liftings of hom-functors into the
  two-element space resp.{} lattice. Due to this fact, we can only
  expect dualities for categories cogenerated by the two-element set
  with an appropriate structure. A prime example of such a situation
  is Stone's duality theorem for Boolean algebras and Boolean spaces,
  the latter being precisely those compact Hausdorff spaces which are
  cogenerated by the two-element discrete space. In this paper we aim
  for a systematic way of extending this duality theorem to categories
  including all compact Hausdorff spaces. To achieve this goal, we
  combine duality theory and quantale-enriched category theory. Our
  main idea is that, when passing from the two-element discrete space
  to a cogenerator of the category of compact Hausdorff spaces, all
  other involved structures should be substituted by corresponding
  enriched versions. Accordingly, we work with the unit interval
  $[0,1]$ and present duality theory for ordered and metric compact
  Hausdorff spaces and (suitably defined) finitely cocomplete
  categories enriched in $[0,1]$.
\end{abstract}

\maketitle

\tableofcontents

\section{Introduction}
\label{sec:intro}

In \citep{BD01}, the authors make the seemingly paradoxical
observation that ``\dots an equation is only interesting or useful to
the extent that the two sides are different!''. Undoubtedly, a
moment's thought convinces us that an equation like
$e^{i\omega}=\cos(\omega)+i\sin(\omega)$ is far more interesting than
the rather dull statement that $3=3$. A comparable remark applies if
we go up in dimension: equivalent categories are thought to be
essentially equal, but an equivalence is of more interest if the
involved categories look different. Numerous examples of equivalences
of ``different'' categories relate a category $\catX$ and the dual of
a category $\catA$. Such an equivalence is called a \df{dual
  equivalence} or simply a \df{duality}, and is usually denoted by
$\catX\simeq\catA^\op$.  As it is true for every equivalence, a
duality allows to transport properties from one side to the other. The
presence of the dual category on one side is often useful since our
knowledge of properties of a category is typically asymmetric. Indeed,
many ``everyday categories'' admit a representable and hence limit
preserving functor to $\SET$. Therefore in these categories limits are
``easy''; however, colimits are often ``hard''. In these
circumstances, an equivalence $\catX\simeq\catA^\op$ together with the
knowledge of limits in $\catA$ help us to understand colimits in
$\catX$. The dual situation, where colimits are ``easy'' and limits
are ``hard'', frequently emerges in the context of coalgebras. For
example, the category $\CoAlg(V)$ of coalgebras for the Vietoris
functor $V$ on the category $\STONE$ of Boolean spaces and continuous
functions is known to be equivalent to the dual of the category $\BAO$
with objects Boolean algebras $B$ with an operator $h:B\to B$
satisfying the equations
\begin{align*}
  h(\bot)=\bot &&\text{and}&& h(x\vee y)=h(x)\vee h(y),
\end{align*}
and morphisms the Boolean homomorhisms which also preserve the
additional unary operation (see \citep{Hal56}). It is a trivial
observation that $\BAO$ is a category of algebras over $\SET$ defined
by a (finite) set of operations and a collection of equations; every
such category is known to be complete and cocomplete. Notably, the
equivalence $\CoAlg(V)\simeq\BAO^\op$ allows to conclude the
non-trivial fact that $\CoAlg(V)$ is complete. This argument also
shows that, starting with a category $\catX$, the category $\catA$ in
a dual equivalence $\catX\simeq\catA^\op$ does not need to be a
familiar category. It is certainly beneficial that $\catA=\BAO$ is a
well-known category; however, every algebraic category describable by
a set of operations would be sufficient to conclude completeness of
$\catX=\CoAlg(V)$. We refer to \citep{KKV04,BKR07} for more examples of
dualities involving categories of coalgebras.

The example above as well as the classical Stone-dualities for Boolean
algebras and distributive lattices (see \citep{Sto36,Sto38a,Sto38})
are obtained using the two-element space or the two-element
lattice. Due to this fact, we can only expect dualities for categories
cogenerated by $\two=\{0,1\}$ with an appropriate structure. For
instance, the category $\STONE$ is the full subcategory of the
category $\COMPHAUS$ of compact Hausdorff spaces and continuous maps
defined by those spaces $X$ where the cone $(f:X\to\two)_f$ is
point-separating and initial. In order to obtain duality results
involving all compact Hausdorff spaces, we need to work with a
cogenerator of $\COMPHAUS$ rather than the 2-element discrete
space. Of course, this is exactly the perspective taken in the
classical Gelfand duality theorem (see \citep{Gel41a}) or in several
papers on lattices of continuous functions (see \citep{Kap47,Kap48}
and \citep{Ban83}) that consider functions into the unit disc or the
unit interval. However, in these approaches, the objects of the dual
category of $\COMPHAUS$ do not appear immediately as generalisations
of Boolean algebras.

This is the right moment to mention another cornerstone of our work:
the theory of quantale-enriched categories. Our main motivation stems
from Lawvere's seminal paper \citep{Law73} that investigates metric
spaces as categories enriched in the quantale $[0,\infty]$. Keeping in
mind that ordered sets are categories enriched in the two-element
quantale $\two$, our thesis is \emph{that the passage from the
  two-element space to the compact Hausdorff space $[0,\infty]$ should
  be matched by a move from ordered structures to metric structures on
  the other side}. In fact, we claim that some results about lattices
of real-valued continuous functions secretly talk about (ultra)metric
spaces; for instance, in Section~\ref{sec:enriched-cats-act}, we point
out how to interpret Propositions 2 and 3 of \citep{Ban83} in this
way. Roughly speaking, in analogy with the results for the two-element
space, we are looking for an equivalence functor (or at least a full
embedding)
\[
  \COMPHAUS\longrightarrow(\text{metric spaces with some
    (co)completeness properties})^\op
\]
and, more generally, with $\POSCH$ denoting the category of partially
ordered compact (Hausdorff) spaces and monotone continuous maps, a
full embedding
\[
  \POSCH\longrightarrow(\text{metric spaces with some
    (co)completeness properties})^\op.
\]
Inspired by \citep{Hal56}, we obtain this as a restriction of a more
general result about a full embedding of the Kleisli category
$\POSCH_{\mV}$ of the Vietoris monad $\mV$ on $\POSCH$:
\[
  \POSCH_{\mV}\longrightarrow(\text{``finitely cocomplete'' metric
    spaces})^\op.
\]
The notion of ``finitely cocomplete metric space'' should be
considered as the metric counterpart to semi-lattice, and ``metric
space with some (co)completeness properties'' as the metric
counterpart to (distributive) lattice. This way we also exhibit the
algebraic nature of the dual category of $\POSCH$ which resembles the
classical result stating that $\COMPHAUS^\op$ is a $\aleph_1$-ary
variety (see \citep{Isb82,MR17}).

For technical reasons, we consider structures enriched in a quantale
based on $[0,1]$ rather than in $[0,\infty]$; nevertheless, since the
lattices $[0,1]$ and $[0,\infty]$ are isomorphic, we still talk about
metric spaces. In Section~\ref{sec:enriched-cats-act} we recall the
principal facts about quantale-enriched categories needed in this
paper, and in Section~\ref{sec:cont-quantale-unit} we present the
classification of continuous quantale structures on the unit interval
$[0,1]$ obtained in \citep{Fau55} and \citep{MS57}. Since the Vietoris
monad $\mV$ on the category $\POSCH$ of partially ordered compact
spaces and monotone continuous maps plays a key role in the results of
Section~\ref{sec:duality-distributors}, we provide the necessary
background material in Section~\ref{sec:stably-compact-Vietoris}. We
review duality theory in Section~\ref{sec:dual-adjunctions}; in
particular, for a monad $\mT$, we explain the connection between
functors $\catX_\mT\to\catA^\op$ and certain $\mT$-algebras. After
these introductory part, in Section~\ref{sec:duality-distributors} we
develop duality theory for the Kleisli category $\POSCH_\mV$. We found
a first valuable hint in \citep{Sha92} where the author gives a
functional representation of the classical Vietoris monad on
$\COMPHAUS$ using the algebraic structure on the non-negative
reals. Inspired by this result, for every continuous quantale
structure on $[0,1]$, we obtain a functional representation of the
Vietoris monad on $\POSCH$, which leads to a full embedding of
$\POSCH_\mV$ into a category of monoids of finitely cocomplete
$[0,1]$-categories. We also identify the continuous functions in
$\POSCH_\mV$ as precisely the monoid homomorphisms on the other
side. Section~\ref{sec:stone-weierstrass-theorem} presents a
Stone--Weierstra\ss{} type theorem for $[0,1]$-categories which helps
us to establish a dual equivalence involving the category
$\POSCH_\mV$. Finally, since we moved from order structures to
structures enriched in $[0,1]$, it is only logical to also substitute
the Vietoris monad, which is based on functions $X\to\two$, by a monad
that uses functions of type $X\to [0,1]$ defined on metric
generalisations of partially ordered compact spaces. In
Sections~\ref{sec:metr-comp-hausd} and \ref{sec:enriched-Vietoris} we
extend our setting from partially ordered compact spaces to ``metric
compact Hausdoff spaces'' and consider the enriched Vietoris monads
introduced in \citep{Hof14}. Denoting these monads by $\mV$ as well,
in analogy to the ordered case, for certain quantale structures on
$[0,1]$ we obtain a full embedding
\[
  (\text{metric compact Hausdoff
    spaces})_{\mV}\longrightarrow(\text{``finitely cocomplete''
    $[0,1]$-categories})^\op.
\]

Last but not least, we would like to point out that this is not the
first work transporting classical duality results into the realm of
metric spaces. An approach version (see \citep{Low97}) of the duality
between the categories of sober spaces and continuous maps and of
spatial frames and homomorphisms is obtained in \citep{BLO06} and
extensively studied in \citep{Olm05} (see also \citep{OV10}). By
definition, an approach frame is a frame with some actions of
$[0,\infty]$; keeping in mind the results of
Section~\ref{sec:enriched-cats-act}, we can describe approach frames
as particular (co)complete metric spaces. This point of view is taken
in \citep{HS11}.

\section{Enriched categories as actions}\label{sec:enriched-cats-act}

To explain the passage from the ordered to the metric case, it is
convenient to view ordered sets and metric spaces as instances of the
same notion, namely that of a quantale-enriched category. All material
presented here is well-known, we refer to the classical sources
\citep{EK66}, \citep{Law73} and \citep{Kel82}. A very extensive
presentation of this theory in the quantaloid-enriched case can be
found in \citep{Stu05,Stu06,Stu07a}. We would also like to point the
reader to \citep{KL00}, \citep{KS05} and \citep{CH09a} where enriched
categories with certain colimits are studied.

\begin{definition}
  A (commutative and unital) \df{quantale} $\V$ is a complete lattice
  which carries the structure of a commutative monoid
  $\otimes:\V\times\V\to\V$ with unit element $k\in\V$ such that
  $u\otimes-:\V\to\V$ preserves suprema, for each $u\in\V$.
\end{definition}

Hence, every monotone map $u\otimes-:\V\to\V$ has a right adjoint
$\hom(u,-):\V\to\V$ which is characterised by
\[
  u\otimes v\le w\iff v\le\hom(u,w),
\]
for all $v,w\in\V$.

\begin{definition}
  Let $\V$ be a quantale. A \df{$\V$-category} is a pair $(X,a)$
  consisting of a set $X$ and a map $a:X\times X\to\V$ satisfying
  \begin{align*}
    k&\le a(x,x), & a(x,y)\otimes a(y,z)\le a(x,z),
  \end{align*}
  for all $x,y,z\in X$. Given $\V$-categories $(X,a)$ and $(Y,b)$, a
  \df{$\V$-functor} $f:(X,a)\to (Y,b)$ is a map $f:X\to Y$ such that
  \[
    a(x,y)\le b(f(x),f(y)),
  \]
  for all $x,y\in X$.
\end{definition}

For every $\V$-category $(X,a)$, $a^\circ(x,y)=a(y,x)$ defines another
$\V$-category structure on $X$, and the $\V$-category
$(X,a)^\op:=(X,a^\circ)$ is called the \df{dual} of $(X,a)$. Clearly,
$\V$-categories and $\V$-functors define a category, denoted as
$\Cats{\V}$. The category $\Cats{\V}$ is complete and cocomplete, and
the canonical forgetful functor $\Cats{\V}\to\SET$ preserves limits
and colimits. The quantale $\V$ becomes a $\V$-category with structure
$\hom:\V\times\V\to\V$. For every set $S$, we can form the $S$-power
$\V^S$ of $\V$ which has as underlying set all functions $h:S\to\V$,
and the $\V$-category structure $[-,-]$ is given by
\[
  [h,l]=\bigwedge_{s\in S}\hom(h(s),l(s)),
\]
for all $h,k:S\to\V$.

For a quantale $\V$ and sets $X$, $Y$, a \df{$\V$-relation} from $X$
to $Y$ is a map $X \times Y \to \V$ and it will be represented by
$X \relto Y$. As for ordinary relations, $\V$-relations can be
composed via "matrix mulptiplication". That is, for $r: X \relto Y$
and $s: Y \relto Z$, the composite $s \cdot r:X \relto Z$ is
calculated pointwise by
\[
  (s \cdot r)(x,z)=\bigvee_{y \in Y} r(x,y) \otimes s(y,z),
\]
for every $x \in X$ and $z \in Z$. We note that the structure of a
$\V$-category is by definition a reflexive and transitive
$\V$-relation, since the axioms dictate that, for a $\V$-category
$(X,a)$, $1_X \leq a$ and $a \cdot a \leq a$. A $\V$-relation
$r:X \relto Y$ between $\V$-categories $(X,a)$ and $(Y,b)$ is called a
\df{$\V$-distributor} if $ r \cdot a \leq r $ and $ b \cdot r \leq r$,
and we write $r:(X,a) \modto (Y,b)$. Clearly, the reverse inequalities
always hold, which means that a $\V$-distributor is a $\V$-relation
that is strictly compatible with the structure of $\V$-category.

\begin{examples}\label{exs:quantales}
  Our principal examples are the following.
  \begin{enumerate}
  \item The two-element Boolean algebra $\two=\{0,1\}$ of truth values
    with $\otimes$ given by ``and'' $\&$. Then
    $\hom(u,v)=(u\implies v)$ is implication. The category
    $\Cats{\two}$ is equivalent to the category $\ORD$ of ordered sets
    and monotone maps.
  \item The complete lattice $[0,\infty]$ ordered by the ``greater or
    equal'' relation $\geqslant$ (so that the infimum of two numbers
    is their maximum and the supremum of $S\subseteq[0,\infty]$ is
    given by $\inf S$), with multiplication $\otimes=+$. In this case
    we have
    \[\hom(u,v)=v\ominus u:=\max(v-u,0).\]
    For this quantale, a $[0,\infty]$-category is a generalised metric
    space \`a la \citeauthor{Law73} and a $[0,\infty]$-functor is a
    non-expansive map (see \citep{Law73}). We denote this category by
    $\MET$.
  \item\label{item:4} Of particular interest to us is the complete
    lattice $[0,1]$ with the usual ``less or equal'' relation $\le$,
    which is isomorphic to $[0,\infty]$ via the map
    $[0,1]\to[0,\infty],\,u\mapsto -\ln(u)$ where $-\ln(0)=\infty$. As
    the examples below show, metric, ultrametric and bounded metric
    spaces appear as categories enriched in a quantale based on this
    lattice. More in detail, we consider the following quantale
    operations on $[0,1]$ with neutral element $1$.
    \begin{enumerate}
    \item The tensor $\otimes=*$ is multiplication and then
      \[
        \hom(u,v)=v\varoslash u:=
        \begin{cases}
          \min(\frac{v}{u},1) & \text{if $u\neq 0$,}\\
          1 & \text{otherwise.}
        \end{cases}
      \]
      Via the isomorphism $[0,1]\simeq[0,\infty]$, this quantale is
      isomorphic to the quantale $[0,\infty]$ described above, hence
      $\Cats{[0,1]}\simeq\MET$.
    \item The tensor $\otimes=\wedge$ is infimum and then
      \[
        \hom(u,v)=
        \begin{cases}
          1 & \text{if $u\le v$,}\\
          v & \text{otherwise.}
        \end{cases}
      \]
      In this case, the isomorphism $[0,1]\simeq[0,\infty]$
      establishes an equivalence between $\Cats{[0,1]}$ and the
      category $\UMET$ of ultrametric spaces and non-expansive maps.
    \item The tensor $\otimes=\luk$ is the \df{\L{}ukasiewicz tensor}
      given by $u\luk v=\max(0,u+v-1)$, here
      $\hom(u,v)=\min(1,1-u+v)=1-\max(0,u-v)$. Via the lattice
      isomorphism $[0,1]\to[0,1],\,u\mapsto 1-u$, this quantale is
      isomorphic to the quantale $[0,1]$ with ``greater or equal''
      relation $\geqslant$ and tensor $u\otimes v=\min(1,u+v)$
      truncated addition. This observation identifies $\Cats{[0,1]}$
      as the category $\BMET$ of bounded-by-$1$ metric spaces and
      non-expansive maps.
    \end{enumerate}
  \end{enumerate}
\end{examples}

Every $\V$-category $(X,a)$ carries a natural order defined by
\[
  x\le y \text{ whenever } k\le a(x,y),
\]
which can be extended pointwise to $\V$-functors making $\Cats{\V}$ a
\emph{2-category}. Therefore we can talk about adjoint $\V$-functors;
as usual, $f:(X,a)\to(Y,b)$ is left adjoint to $g:(Y,b)\to(X,a)$,
written as $f\dashv g$, whenever $1_X\le gf$ and $fg\le
1_Y$. Equivalently, $f\dashv g$ if and only if
\[
  b(f(x),y)=a(x,g(y)),
\]
for all $x\in X$ and $y\in Y$. We note that maps $f$ and $g$ between
$\V$-categories satisfying the equation above are automatically
$\V$-functors.

The natural order of $\V$-categories defines a faithful functor
$\Cats{\V}\to\ORD$. A $\V$-category is called \df{separated} whenever
its underlying ordered set is anti-symmetric, and we denote by
$\Cats{\V}_\sep$ the full subcategory of $\Cats{\V}$ defined by all
separated $\V$-categories. In particular, $\ORD_\sep$ denotes the
category of all anti-symmetric ordered sets and monotone maps. In the
sequel we will frequently consider separated $\V$-categories in order
to guarantee that adjoints are unique. We note that the underlying
order of the $\V$-category $\V$ is just the order of the quantale
$\V$, and the order of $\V^S$ is calculated pointwise. In particular,
$\V^S$ is separated.

\begin{definition}
  A $\V$-category $(X,a)$ is called \df{$\V$-copowered} whenever the
  $\V$-functor $a(x,-):(X,a)\to(\V,\hom)$ has a left adjoint
  $x\otimes-:(\V,\hom)\to(X,a)$ in $\Cats{\V}$, for every $x\in X$.
\end{definition}

Elementwise, this means that, for all $x\in X$ and $u\in\V$, there
exists some element $x\otimes u\in X$, called the $u$-copower of $x$,
such that
\[
  a(x\otimes u,y)=\hom(u,a(x,y)),
\]
for all $y\in X$.

\begin{example}
  The $\V$-category $\V$ is $\V$-copowered, with copowers given by the
  multiplication of the quantale $\V$. More generally, for every set
  $S$, the $\V$-category $\V^S$ is $\V$-copowered: for every
  $h\in\V^S$ and $u\in\V$, the $u$-copower of $h$ is given by
  \[
    (h\otimes u)(x)=h(x)\otimes u,
  \]
  for all $x\in S$.
\end{example}

\begin{remark}
  If $(X,a)$ is a $\V$-copowered $\V$-category, then, for every
  $x\in X$ and $u=\bot$ the bottom element of $\V$, we have
  \[
    a(x\otimes \bot,y)=\hom(\bot,a(x,y))=\top
  \]
  for all $y\in X$. In particular, $x\otimes \bot$ is a bottom element
  of the $\V$-category $(X,a)$.
\end{remark}

Every $\V$-copowered and separated $\V$-category comes equipped with
an action $\otimes:X\times\V\to X$ of the quantale $\V$ satisfying
\begin{enumerate}
\item $x\otimes k=x$,
\item $(x\otimes u)\otimes v=x\otimes(u\otimes v)$,
\item
  $\displaystyle{x\otimes\bigvee_{i\in I}u_i=\bigvee_{i\in I}(x\otimes
    u_i)}$;
\end{enumerate}
for all $x\in X$ and $u,v,u_i\in \V$ ($i\in I$). Conversely, given an
anti-symmetric ordered set $X$ with an action $\otimes:X\times\V\to X$
satisfying the three conditions above, one defines a map
$a:X\times X\to\V$ by $x\otimes-\dashv a(x,-)$, for all $x\in X$. It
is easy to see that $(X,a)$ is a $\V$-copowered $\V$-category whose
order is the order of $X$ and where copowers are given by the action
of $X$. Writing $\Tens{\V}_{\sep}$ for the category of $\V$-copowered
and separated $\V$-categories and copower-preserving $\V$-functors and
$\ORD_{\sep}^\V$ for the category of anti-symmetric ordered sets $X$
with an action $\otimes:X\times\V\to X$ satisfying the three
conditions above and action-preserving monotone maps, the above
construction yields an isomorphism
\[
  \Tens{\V}_{\sep}\simeq\ORD_{\sep}^\V.
\]
We also note that the inclusion functor $\Tens{\V}_{\sep}\to\Cats{\V}$
is monadic.

\begin{remark}\label{rem:Ban}
  The identification of certain metric spaces as ordered sets with an
  action of $[0,1]$ allows us to spot the appearance of metric spaces
  where it does not seem obvious at first sight. For instance,
  \citep{Ban83} considers the distributive lattice $DX$ of continuous
  functions from a compact Hausdorff space $X$ into the unit interval
  $[0,1]$, and, for a continuous map $f:X\to Y$, the lattice
  homomorphism $Df:DY\to DX,\,\psi\mapsto\psi\cdot f$ is given by
  composition with $f$. In \citep[Proposition 2]{Ban83} it is shown
  that a lattice homomorphism $\varphi:DY\to DX$ is of the form
  $\varphi=Df$, for some continuous map $f:X\to Y$, if and only if
  $\varphi$ preserves constant functions. Subsequently, \citep{Ban83}
  consideres the algebraic theory of distributive lattices augmented
  by constants, one for each element of $[0,1]$; and eventually
  obtains a duality result for compact Hausdorff spaces. Motivated by
  the considerations in this section, instead of adding constants we
  will consider $DX$ as a lattice equipped with the action of $[0,1]$
  defined by
  \[
    (f\otimes u)(x)=f(x)\wedge u,
  \]
  and then \citep[Proposition 2]{Ban83} tells us that the lattice
  homomorphisms $\varphi:DY\to DX$ of the form $\varphi=Df$ are
  precisely the action-preserving ones. Hence, \citeauthor{Ban83}'s
  result can be reinterpreted in terms of $[0,1]$-copowered
  ultrametric spaces.
\end{remark}

The notion of copower in a $\V$-category $(X,a)$ is a special case of
a weighted colimit in $(X,a)$, as we recall next. In the remainder of
this section we write $G$ to denote the $\V$-category $(1,k)$.

A \df{weighted colimit diagram} in $X$ is given by a $\V$-category $A$
together with a $\V$-functor $h:A\to X$ and a $\V$-distributor
$\psi:A\modto G$, the latter is called the \df{weight} of the
diagram. A \df{colimit} of such a diagram is an element $x_0\in X$
such that, for all $x\in X$,
\[
  a(x_0,x)=\bigwedge_{z\in A}\hom(\psi(z),a(h(z),x)).
\]
If a weighted colimit diagram has a colimit, then this colimit is
unique up to equivalence. A $\V$-functor $f:X\to Y$ between
$\V$-categories \df{preserves} the colimit of this diagram whenever
$f(x_0)$ is a colimit of the weighted colimit diagram in $Y$ given by
$fh:A\to Y$ and $\psi:A\modto G$.
\begin{examples}
  \begin{enumerate}
  \item For $A=G$, a weighted colimit diagram in $X$ is given by an
    element $x:G\to X$ and an element $u:G\modto G$ in $\V$, a colimit
    of this diagram is the $u$-copower $x\otimes u$ of $x$.
  \item For a family $h:I\to X,\,i\mapsto x_i$ in $X$ we consider the
    distributor $\psi:I\modto G$ defined by $\psi(z)=k$, for all
    $z\in I$. Then $\bar{x}$ is a colimit of this diagram precisely
    when
    \[
      a(\bar{x},x)=\bigwedge_{i\in I}a(x_i,x),
    \]
    for all $x\in X$; that is, $\bar{x}$ is an order-theoretic
    supremum of $(x_i)_{i\in I}$ and every $a(-,x):X\to\V^\op$
    preserves this supremum. Such a supremum is called \df{conical
      supremum}.

    Recall that a $\V$-copowered $\V$-category $(X,a)$ can be viewed
    as an ordered set $X$ with an action $\otimes:X\times\V\to X$. In
    terms of this structure, $(X,a)$ has all conical suprema of a
    given shape $I$ if and only if every family $(x_i)_{i\in I}$ has a
    supremum in the ordered set $X$ and, moreover,
    \[
      \left(\bigvee_{i\in I}x_i\right)\otimes u\simeq\bigvee_{i\in
        I}(x_i\otimes u)
    \]
    for all $u\in\V$. This follows from the facts that
    $\bigvee_{i\in I}x_i\otimes-$ is left adjoint to
    $a(\bigvee_{i\in I}x_i,-)$ and
    \[
      \V\xrightarrow{\,\Delta_\V\,}\V^I\xrightarrow{\,\prod_{i\in
          I}(x_i\otimes-)\,}X^I\xrightarrow{\,\bigvee\,}X
    \]
    is left adjoint to
    \[
      X\xrightarrow{\,\Delta_X\,}X^I\xrightarrow{\,\prod_{i\in
          I}(a(x_i,-)\,}\V^I\xrightarrow{\,\bigwedge\,}\V.
    \]
  \end{enumerate}
\end{examples}
A $\V$-category $X$ is called \df{cocomplete} if every weighted
colimit diagram has a colimit in $X$. One can show that $X$ is
cocomplete if and only if $X$ has the two types of colimits described
above, in this case the colimit of an arbitrary diagram can be
calculated as
\[
  x_0=\bigvee_{z\in A}h(z)\otimes\psi(z)
\]
since
\[
  a(\bigvee_{z\in A}h(z)\otimes\psi(z),x)=\bigwedge_{z\in
    A}a(h(z)\otimes\psi(z),x)=\bigwedge_{z\in
    A}\hom(\psi(z),a(h(z),x)).
\]
In particular, the $\V$-category $\V$ is cocomplete, and so are all
its powers $\V^S$.

A $\V$-functor $f:X\to Y$ between cocomplete $\V$-categories is called
\df{cocontinuous} whenever $f$ preserves all colimits of weighted
colimit diagrams; by the above, $f$ is cocontinuous if and only if $f$
preserves copowers and order-theoretic suprema.

\begin{definition}
  A $\V$-category $X$ is called \df{finitely cocomplete} whenever
  every weighted colimit diagram given by $h:A\to X$ and
  $\psi:A\modto G$ where the underlying set of $A$ is finite has a
  colimit in $X$. We call a $\V$-functor $f:X\to Y$ between finitely
  cocomplete $\V$-categories \df{finitely cocontinuous} whenever those
  colimits are preserved.
\end{definition}

Therefore:
\begin{itemize}
\item $X$ is finitely cocomplete if and only if $X$ has all copowers,
  a bottom element, all order-theoretic binary suprema and, moreover,
  these suprema are preserved by all $\V$-functors
  $a(-,x):X\to\V^\op$.
\item a map $f:X\to Y$ between finitely cocomplete $\V$-categories is
  a finitely cocontinuous $\V$-functor if and only if $f$ is monotone
  and preserves copowers and binary suprema.
\end{itemize}

In the sequel we write $\FinSup{\V}$ to denote the category of
separated finitely cocomplete $\V$-categories and finitely
cocontinuous $\V$-functors. We also recall that the inclusion functor
$\FinSup{\V}\to\Cats{\V}$ is monadic; in particular, $\FinSup{\V}$ is
complete and $\FinSup{\V}\to\Cats{\V}$ preserves all limits.

\begin{remark}\label{rem:FinCoCts_quasivariety}
  By the considerations of this section, $\FinSup{\V}$ can be also
  seen as a quasivariety (for more information on algebraic categories
  we refer to \citep{AR94} and \citep{ARV10}). In fact, a separated
  finitely cocomplete $\V$-category can be described as a set $X$
  equipped with a nullary operation $\bot$, a binary operation $\vee$,
  and unary operations $-\otimes u$ ($u\in\V$), subject to the following
  equations and implications:
  \begin{align*}
    x\vee x&=x, &x\vee y&=y\vee x, & x\vee (y\vee z) &=(x\vee y)\vee z, & x\vee\bot&=x,\\
    x\otimes k&=x, & (x\otimes u)\otimes v&=x\otimes (u\otimes v),& \bot\otimes u&=\bot, & (x\vee y)\otimes u &=(x\otimes u)\vee(y\otimes u);
  \end{align*}
  for all $x,y,z\in X$ and $u,v\in\V$. We also have to impose the
  conditions
  \[
    x\otimes v=\bigvee_{u\in S}(x\otimes u),
  \]
  for all $x\in X$ and $S\subseteq\V$ with $v=\bigvee S$; however,
  this is not formulated using just the operations above. Writing
  $x\le y$ as an abbreviation for the equation $y=x\vee y$, we can
  express the condition ``$x\otimes v$ is the supremum of
  $\{x\otimes u\mid u\in S\}$'' by the equations
  \[
    x\otimes u\le x\otimes v,\hspace{2em}(u\in S)
  \]
  and the implication
  \[
    \bigwedge_{u\in S}(x\otimes u\le y)\Longrightarrow (x\otimes v\le
    y).
  \]
  Furthermore, the morphisms of $\FinSup{\V}$ correspond precisely to
  the maps preserving these operations. By the considerations above,
  with $\lambda$ denoting the smallest regular cardinal larger than
  the size of $\V$, the category $\FinSup{\V}$ is equivalent to a
  $\lambda$-ary quasivariety. From that we conclude that $\FinSup{\V}$
  is also cocomplete. Finally, if the quantale $\V$ is based on the
  lattice $[0,1]$, then it is enough to consider countable subsets
  $S\subseteq\V$, and therefore $\FinSup{\V}$ is equivalent to a
  $\aleph_1$-ary quasivariety.
\end{remark}

Another important class of colimit weights is the class of all right
adjoint $\V$-distributors $\psi:A\modto G$.

\begin{definition}
  A $\V$-category $X$ is called \df{Cauchy-complete} whenever every
  diagram $(h:A\to X,\psi:A\modto G$ with $\psi$ right adjoint has a
  colimit in $X$.
\end{definition}

The designation ``Cauchy-complete'' has its roots in
\citeauthor{Law73}'s amazing observation that, for metric spaces
interpreted as $[0,\infty]$-categories, this notion coincides with the
classical notion of Cauchy-completeness (see \citep{Law73}). We hasten
to remark that every $\V$-functor preserves colimits weighted by right
adjoint $\V$-distributors.

In this context, \citep{HT10} introduces a closure operator
$\overline{(-)}$ on $\Cats{\V}$ which facilitates working with
Cauchy-complete $\V$-categories. As usual, a subset $M\subseteq X$ of
a $\V$-category $(X,a)$ is \df{closed} whenever $M=\overline{M}$ and
$M$ is \df{dense} in $X$ whenever $\overline{M}=X$. Below we recall
the relevant facts about this closure operator.
\begin{theorem}\label{thm:closure}
  The following assertions hold.
  \begin{enumerate}
  \item For every $\V$-category $(X,a)$, $x\in X$ and $M\subseteq X$,
    \[
      x\in\overline{M} \iff k\le\bigvee_{z\in M}a(x,z)\otimes a(z,x).
    \]
    If $\V$ is completely distributive (see \citep{Ran52} and
    \citep{Woo04}) with totally below relation $\ll$ and
    $k\le\bigvee_{u\ll k}u\otimes u$, then $x\in\overline{M}$ if and
    only if, for every $u\ll k$, there is some $z\in M$ with
    $u\le a(x,z)$ and $u\le a(z,x)$. By \citep[Theorem 1.12]{Fla92},
    the quantale $\V$ satisfies $k\le\bigvee_{u\ll k}u\otimes u$
    provided that the subset $A=\{u\in\V\mid u\ll k\}$ of $\V$ is
    directed.
  \item The $\V$-category $\V$ is Cauchy-complete.
  \item The full subcategory of $\Cats{\V}$ defined by all
    Cauchy-complete $\V$-categories is closed under limits in
    $\Cats{\V}$.
  \item Let $X$ be a Cauchy-complete and separated $\V$-category and
    $M\subseteq X$. Then the $\V$-subcategory $M$ of $X$ is
    Cauchy-complete if and only if the subset $M\subseteq X$ is closed
    in $X$.
  \end{enumerate}
\end{theorem}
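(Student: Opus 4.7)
My plan is to derive all four assertions from the description of the closure operator in \citep{HT10}, combined with routine manipulations of weighted colimits and distributors. For (1), the first equivalence is essentially definitional: given $M \subseteq X$ and $x \in X$, set $\phi : G \modto M$ and $\psi : M \modto G$ by $\phi(z) = a(x, z)$ and $\psi(z) = a(z, x)$. The counit inequality $\phi \cdot \psi \le a|_M$ is automatic from transitivity, so $\phi \dashv \psi$ reduces to the single unit inequality $k \le \psi \cdot \phi = \bigvee_{z \in M} a(x,z) \otimes a(z,x)$. The closure in \citep{HT10} is designed precisely so that $x \in \overline{M}$ corresponds to the existence of such a Cauchy adjunction from $M$ to $x$. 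For the completely distributive refinement, direction $\Leftarrow$ follows by fixing $z_u \in M$ for each $u \ll k$, computing $u \otimes u \le a(x,z_u) \otimes a(z_u,x) \le \bigvee_z a(x,z) \otimes a(z,x)$, and taking suprema over $u \ll k$ using $k \le \bigvee_{u \ll k} u \otimes u$. Direction $\Rightarrow$ uses the totally-below property of $\ll$ in a CD lattice to extract, from $u \ll k \le \bigvee_z a(x,z) \otimes a(z,x)$, a single $z_0 \in M$ with $u \le a(x, z_0) \otimes a(z_0, x)$; the separate bounds $u \le a(x, z_0)$ and $u \le a(z_0, x)$ then follow in the quantales of interest because the unit $k$ is the top element, forcing $a \otimes b \le a \wedge b$.

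Part (2) is immediate from the fact that $\V$ is cocomplete as a $\V$-category (copowers given by $\otimes$, suprema by the lattice structure); any cocomplete $\V$-category is in particular Cauchy-complete. For part (3), I would use that colimits weighted by right adjoint distributors are absolute, hence preserved by every $\V$-functor. A limit in $\Cats{\V}$ is computed at the level of underlying sets with pointwise structure, and projecting a Cauchy diagram onto each factor yields a Cauchy diagram with colimit in that factor; absoluteness makes these colimits compatible with the diagram maps, and the weighted-colimit formula reassembles through the limit because $\hom(u,-)$ preserves infima.

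For part (4), direction $\Rightarrow$: if $M$ is Cauchy-complete and $x \in \overline{M}$, part~(1) supplies a right adjoint weight $\psi : M \modto G$ whose weighted diagram along the inclusion $M \hookrightarrow X$ has $x$ as colimit in $X$ (verified by direct evaluation of the colimit formula using transitivity and the Cauchy condition). Cauchy-completeness of $M$ yields a colimit $x' \in M$ for the same diagram viewed in $M$; since the inclusion preserves Cauchy colimits (absoluteness) and $X$ is separated, $x = x' \in M$, so $M$ is closed. Direction $\Leftarrow$: if $M$ is closed, any Cauchy diagram $(h : A \to M, \psi : A \modto G)$ admits a colimit $x_0 \in X$ by Cauchy-completeness of $X$; a short computation from the colimit formula and the adjointness of $\psi$ yields $k \le \bigvee_{z \in M} a(x_0, z) \otimes a(z, x_0)$, so $x_0 \in \overline{M} = M$ serves as the colimit inside $M$. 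The main subtlety is matching the abstract closure of \citep{HT10} with the displayed formula in~(1); once that identification is in place, the remaining parts reduce to standard computations with weighted colimits.
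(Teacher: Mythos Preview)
The paper does not give a proof of this theorem; it is presented as background material collected from \citep{HT10} and \citep{Fla92}, so there is no ``paper's proof'' to compare against. Your sketch is sound and follows the intended line: part~(1) is precisely the characterisation of the Lawvere closure from \citep{HT10} via Cauchy adjunctions, and parts~(2)--(4) are the standard consequences (cocompleteness of~$\V$, absoluteness of Cauchy colimits, and the reflection of Cauchy-completeness along the inclusion of a closed sub-$\V$-category).

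One small point worth flagging explicitly: in the $\Rightarrow$ direction of the completely-distributive refinement in~(1), from $u\le a(x,z_0)\otimes a(z_0,x)$ you infer $u\le a(x,z_0)$ and $u\le a(z_0,x)$ by using $a\otimes b\le a\wedge b$. That inequality requires the quantale to be \emph{integral} (i.e.\ $k=\top$), which you correctly note holds ``in the quantales of interest'' but is not part of the theorem's hypotheses as stated. The argument goes through in full generality, but by a slightly different route: one first shows that $u\ll k$ implies $u\ll k\otimes k$ and hence, by complete distributivity of~$\otimes$ over suprema, $u\le u_1\otimes u_2$ for some $u_1,u_2\ll k$; then apply the totally-below extraction to each factor separately against $\bigvee_z a(x,z)\otimes a(z,x)$. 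Since the paper only ever uses $\V=[0,1]$ with $k=1=\top$, your restriction is harmless for the applications, but you should state the integrality assumption where you invoke it.
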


The notion of weighted colimit is dual to the one of weighted limit,
of the latter we only need the special case of $u$-powers, with
$u\in\V$.

\begin{definition}
  A $\V$-category $(X,a)$ is called \df{$\V$-powered} whenever, for
  every $x\in X$, the $\V$-functor $a(-,x):(X,a)^\op\to(\V,\hom)$ has
  a left adjoint in $\Cats{\V}$.
\end{definition}

Elementwise, this amounts to saying that, for every $x\in X$ and every
$u\in\V$, there is an element $x\pitchfork u\in X$, called the
\df{$u$-power} of $x$, satisfying
\[
  \hom(u,a(y,x))=a(y,x\pitchfork u),
\]
for all $y\in X$. The $\V$-category $\V$ is $\V$-powered where
$w\pitchfork u=\hom(u,w)$, more generally, $\V^S$ is $\V$-powered with
\[
  (h\pitchfork u)(x)=\hom(u,h(x)),
\]
for all $h\in\V^S$, $u\in\V$ and $x\in S$.

\begin{remark}\label{d:rem:1}
  For every $\V$-functor $f:X\to Y$, $x\in X$ and $u\in\V$,
  $f(u\pitchfork x)\le u\pitchfork f(x)$.
\end{remark}

\section{Continuous quantale structures on the unit interval}\label{sec:cont-quantale-unit}

In this paper we are particularly interested in quantales based on the
complete lattice $[0,1]$. We succinctly review the classification of
all \emph{continuous} quantale structures
$\otimes:[0,1]\times[0,1]\to[0,1]$ on $[0,1]$ with neutral element
$1$. Such quantale structures are also called \df{continuous
  t-norms}. The results obtained in \citep{Fau55} and \citep{MS57}
show that every such tensor is a combination of the three structures
mentioned in Examples~\ref{exs:quantales}(\ref{item:4}). A more
detailed presentation of this material is in \citep{AFS06}.

We start by recalling some standard notation. An element $x\in [0,1]$
is called \df{idempotent} whenever $x\otimes x=x$ and \df{nilpotent}
whenever $x\neq 0$ and, for some $n\in \N$, $x^n=0$. The number of
idempotent and nilpotent elements characterises the three tensors
$\wedge$, $\luk$ and $\otimes$ on $[0,1]$ among all continuous
t-norms.

\begin{proposition}
  Assume that $0$ and $1$ are the only idempotent elements of $[0,1]$
  with respect to a given continuous t-norm. Then
  \begin{enumerate}
  \item $[0,1]$ has no nilpotent elements, then $\otimes=*$ is
    multiplication.
  \item $[0,1]$ has a nilpotent element, then $\otimes=\luk$ is the
    \L{}ukasiewicz tensor. In this case, every element $x$ with
    $0<x<1$ is nilpotent.
  \end{enumerate}
\end{proposition}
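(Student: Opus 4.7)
The plan is to first establish that the absence of non-trivial idempotents forces the t-norm to be Archimedean (in the sense that $x^n\to 0$ for every $x\in(0,1)$, where $x^n$ denotes the $n$-fold $\otimes$-power), and then to identify $\otimes$ via the classical additive-generator representation of \citep{Fau55,MS57}. The dichotomy in the statement then emerges naturally from whether this generator is unbounded or bounded.

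For Archimedeanness, I would fix $x\in(0,1)$ and note that, since $1$ is the unit and $\otimes$ is monotone in each argument, the sequence $(x^n)_{n\in\N}$ is weakly decreasing and bounded below by $0$, hence converges to some $c\in[0,x]$. Passing to the limit in the identity $x^{2n}=x^n\otimes x^n$ and using continuity of $\otimes$ gives $c\otimes c=c$; thus $c$ is idempotent, and since $c\le x<1$ the hypothesis forces $c=0$.

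The secondary claim of part (2) now follows immediately: if $y\in(0,1)$ is nilpotent with $y^m=0$, then for any $x\in(0,1)$ the Archimedean property supplies $n$ with $x^n\le y$, and monotonicity of $\otimes$ yields $x^{nm}=(x^n)^m\le y^m=0$. Hence every $x\in(0,1)$ is nilpotent as soon as one such element exists.

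To pin down the actual form of $\otimes$, I would invoke the theorem that every continuous Archimedean t-norm admits a continuous strictly decreasing \emph{additive generator} $t:[0,1]\to[0,t(0)]$ with $t(1)=0$, satisfying
\[
  x\otimes y=t^{-1}\bigl(\min(t(x)+t(y),\,t(0))\bigr).
\]
Building such a $t$ from the one-parameter semigroup structure a continuous Archimedean t-norm induces on $[0,1]$ is the substantive content of the Faucett--Mostert--Shields classification, and this is the step I expect to be the main obstacle; rather than reconstructing it I would cite \citep{Fau55,MS57,AFS06}. Granted $t$, the case $t(0)=\infty$ occurs exactly in the absence of nilpotents and, after the normalisation $t(x)=-\ln x$, identifies $\otimes$ with multiplication $*$; the case $t(0)<\infty$ corresponds to the nilpotent situation and, after the normalisation $t(0)=1$ and $t(x)=1-x$, identifies $\otimes$ with $\luk$. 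This yields both conclusions.
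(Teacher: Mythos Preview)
The paper does not supply its own proof of this proposition: Section~\ref{sec:cont-quantale-unit} is explicitly a review of the classification results of \citep{Fau55} and \citep{MS57}, with the reader referred to \citep{AFS06} for details, and the proposition is simply stated. Your sketch is therefore not being compared against a competing argument but against a bare citation, and the route you take --- first extracting Archimedeanness from the absence of non-trivial idempotents, then invoking the additive-generator representation --- is exactly the standard one underlying those references.

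There is one point worth tightening. The additive generator $t$ is unique only up to a positive multiplicative constant, so ``the normalisation $t(x)=-\ln x$'' (respectively $t(x)=1-x$) is not available in general: an arbitrary strict continuous t-norm need not be \emph{equal} to multiplication, only order-isomorphic to it, and similarly in the nilpotent case. What the generator argument actually delivers is an order-isomorphism $\varphi:[0,1]\to[0,1]$ with $\varphi(x\otimes y)=\varphi(x)*\varphi(y)$ (respectively $\varphi(x)\luk\varphi(y)$). This is also how the paper itself phrases the general classification in Theorem~\ref{thm.cont_quantale_unit} (``isomorphic to the quantale $[0,1]$ with either multiplication or \L{}ukasiewicz tensor''), so the apparent ``$\otimes=*$'' in the proposition should be read up to such an isomorphism. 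Your argument is correct once this is made explicit; just replace the normalisation step by the observation that $x\mapsto e^{-t(x)}$ (in the strict case) or $x\mapsto 1-t(x)/t(0)$ (in the nilpotent case) furnishes the required quantale isomorphism.
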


To deal with the general case, for a continuous t-norm $\otimes$
consider the subset $E=\{x\in [0,1]\mid x\text{ is
  idempotent}\}$. Note that $E$ is closed in $[0,1]$ since it can be
presented as an equaliser of the diagram
\[
  \xymatrix{[0,1]\ar@<2pt>[r]^{\text{\tiny identity}}\ar@<-2pt>[r]_{-\otimes-} & [0,1]}
\]
in $\COMPHAUS$.

\begin{lemma}
  Let $\otimes$ be a continuous t-norm on $[0,1]$, $x,y\in[0,1]$ and
  $e\in E$ so that $x\le e\le y$. Then $x\otimes y=x$.
\end{lemma}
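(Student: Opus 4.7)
The plan is to first reduce to the special case $y = e$. Since $\otimes$ preserves suprema in each variable, it is monotone in each variable, and therefore $e \le y \le 1$ gives
\[
  x \otimes e \;\le\; x \otimes y \;\le\; x \otimes 1 \;=\; x.
\]
Thus it suffices to prove $x \otimes e = x$ whenever $x \le e$ and $e$ is idempotent; the lemma then follows by squeezing.

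For this reduction to be useful, the key step is to represent $x$ as a product involving $e$. Consider the continuous monotone map $e \otimes - \colon [0,1] \to [0,1]$. At the endpoints it takes the values $e \otimes 0 = 0$ and $e \otimes 1 = e$, so by the intermediate value theorem its image contains the whole interval $[0,e]$. Since $x \le e$, I can pick some $z \in [0,1]$ with $e \otimes z = x$.

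Now the computation is immediate from associativity, commutativity, and the idempotency $e \otimes e = e$:
\[
  x \otimes e \;=\; (e \otimes z) \otimes e \;=\; (e \otimes e) \otimes z \;=\; e \otimes z \;=\; x.
\]
Combining this with the first paragraph yields $x \otimes y = x$.

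The only slightly nontrivial ingredient is the intermediate value argument, which is where continuity of the t-norm (as opposed to mere sup-preservation in each argument separately) is genuinely used; everything else is formal manipulation with the monoid structure. No real obstacle is expected.
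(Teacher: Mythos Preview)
Your proof is correct. The paper does not supply its own proof for this lemma; it is stated without proof as part of a review of the classification of continuous t-norms, with references to \citeauthor{Fau55}, \citeauthor{MS57}, and the monograph \citep{AFS06}. Your argument---reducing to $x\otimes e=x$ by monotonicity, then using continuity of $e\otimes-$ and the intermediate value theorem to write $x=e\otimes z$, and finishing with associativity and idempotency of $e$---is the standard one and is essentially the proof one finds in the cited literature.
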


\begin{corollary}
  Let $\otimes$ be a continuous t-norm on $[0,1]$ so that every
  element is idempotent. Then $\otimes=\wedge$. 
\end{corollary}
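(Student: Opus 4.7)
The plan is to derive the corollary as an immediate consequence of the preceding lemma, using only commutativity of the t-norm and a case split on the order of two given elements.

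Since every element of $[0,1]$ is assumed idempotent, we have $E = [0,1]$. Given arbitrary $x,y \in [0,1]$, I would first treat the case $x \le y$: choosing $e = y$, one has $x \le e \le y$ with $e \in E$, so the lemma directly yields $x \otimes y = x = x \wedge y$. For the case $y \le x$, I invoke commutativity of $\otimes$ together with the previous case applied to $(y, x)$, obtaining $x \otimes y = y \otimes x = y = x \wedge y$.

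Combining the two cases, $x \otimes y = x \wedge y$ for all $x, y \in [0,1]$, which is exactly $\otimes = \wedge$.

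There is essentially no obstacle here; the entire content is packed into the lemma (whose proof uses continuity and the squeeze $x \le e \le y$ with $e$ idempotent). The corollary itself is a one-line consequence once commutativity is invoked to reduce to the ordered case $x \le y$.
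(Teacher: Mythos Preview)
Your proof is correct and is exactly the intended derivation: the paper states this corollary without proof precisely because it follows immediately from the preceding lemma by the case split you describe, using commutativity to handle the case $y\le x$.
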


Before announcing the main result of this section, we note that, for
idempotents $e<f$ in $[0,1]$, the closed interval $[e,f]$ is a
quantale with tensor defined by the restriction of the tensor on
$[0,1]$ and neutral element $f$.

\begin{theorem}\label{thm.cont_quantale_unit}
  Let $\otimes$ be a continuous t-norm on $[0,1]$. For every
  non-idempotent $x\in[0,1]$, there exist idempotent elements
  $e,f \in [0,1]$, with $e< x< f$, such that the quantale $[e,f]$ is
  isomorphic to the quantale $[0,1]$ with either multiplication or
  \L{}ukasiewicz tensor.
\end{theorem}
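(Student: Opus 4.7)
The plan is to find idempotents $e < x < f$ as the endpoints of the connected component of $[0,1]\setminus E$ containing $x$, and then transport the quantale structure of $[e,f]$ back to $[0,1]$ along a linear order isomorphism, so the classification for quantales with only $0$ and $1$ idempotent (the proposition stated just above) applies.

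First I would use that $E$ is closed in $[0,1]$, so $[0,1]\setminus E$ is open in the interval topology and decomposes into a disjoint union of maximal open intervals. Since $x$ is non-idempotent, $x$ lies in one such component $(e,f)$. The endpoints of a maximal open subinterval of $[0,1]\setminus E$ belong to $E\cup\{0,1\}$; but $0$ and $1$ are idempotent, so $e,f\in E$ and $e<x<f$ as required.

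Next I would verify that $[e,f]$ is a subquantale with neutral element $f$. From the preceding lemma, applied with $e$ replaced by the idempotent $f$, we get $y\otimes f=y$ whenever $y\le f$, so $f$ is a neutral element for the restriction of $\otimes$ to $[e,f]$. For $y,z\in[e,f]$, monotonicity gives $y\otimes z\le y\otimes f=y\le f$ and, using idempotency of $e$ with $y,z\ge e$, also $y\otimes z\ge e\otimes e=e$; hence $[e,f]$ is closed under $\otimes$. Suprema-preservation of $y\otimes -$ on $[e,f]$ follows from the analogous property on $[0,1]$ together with the fact that $[e,f]$ is an order-convex subset closed under the tensor. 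Thus $([e,f],\otimes,f)$ is indeed a continuous quantale.

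Now I would transport this structure along the order isomorphism $\varphi:[0,1]\to[e,f]$, $\varphi(t)=e+t(f-e)$, defining
\[
s\otimes' t=\varphi^{-1}\bigl(\varphi(s)\otimes\varphi(t)\bigr).
\]
This $\otimes'$ is a continuous t-norm on $[0,1]$ (commutativity, associativity, continuity and neutrality of $1$ are immediate from the corresponding properties of $\otimes$ and from $\varphi(1)=f$), and $\varphi$ is by construction an isomorphism of quantales. By the very choice of $(e,f)$ as a component of $[0,1]\setminus E$, no point of the open interval $(e,f)$ is $\otimes$-idempotent, so the only $\otimes'$-idempotents in $[0,1]$ are $0$ and $1$. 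Applying the preceding proposition, $\otimes'$ is either multiplication (if $[0,1]$ has no $\otimes'$-nilpotents) or the \L{}ukasiewicz tensor (otherwise), and the conclusion follows.

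The main obstacle is really step two: showing carefully that $[e,f]$ is a subquantale, i.e.\ that the tensor restricts and the unit is $f$. Everything else is either a topological observation about closed subsets of $[0,1]$ or a direct application of results already stated in this section.
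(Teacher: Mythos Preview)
Your argument is correct. Note, however, that the paper does not supply its own proof of this theorem: the surrounding section is explicitly a review of the classification due to Faucett and Mostert--Shields, and the result is stated with reference to the literature rather than proved. Your proof is precisely the standard one found in those sources: take the connected component $(e,f)$ of $[0,1]\setminus E$ containing $x$, verify (via the preceding lemma and idempotency of $e$) that $[e,f]$ is a subquantale with unit $f$, transport along the affine order isomorphism to $[0,1]$, and invoke the dichotomy of the preceding proposition. There is nothing in the paper itself to compare against, but your sketch matches the argument in the cited references.
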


\begin{remark}
  We note that every isomorphism $[e,f]\to[0,1]$ of quantales is
  necessarily a homeomorphism.
\end{remark}

The following consequence of Theorem~\ref{thm.cont_quantale_unit} will
be useful in the sequel.

\begin{corollary}\label{cor:no-zero-div}
  Let $(u,v)\in[0,1]\times[0,1]$ with $u\otimes v=0$. Then either
  $u=0$ or $v^n=0$, for some $n\in\N$. Hence, if there are no
  nilpotent elements, then $u=0$ or $v=0$.
\end{corollary}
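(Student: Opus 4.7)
I would prove the corollary by a case analysis on where $v$ sits relative to the nearest idempotents, using the classification theorem as the main tool. Suppose $u \otimes v = 0$ and $u \neq 0$; we want to show $v^n = 0$ for some $n \in \N$. Since the set $E$ of idempotents is closed, contains $0$ and $1$, and is linearly ordered, I can define $e = \sup\{e' \in E \mid e' \le v\}$ and $f = \inf\{e' \in E \mid v \le e'\}$, both of which lie in $E$ and satisfy $e \le v \le f$.

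First I would dispose of the easy geometric sub-cases using the preceding lemma (``if $x \le e' \le y$ with $e' \in E$, then $x\otimes y = x$''). If $u \le e$, then $u \le e \le v$ forces $u \otimes v = u$, so $u = 0$, contradicting our assumption. If $u \ge f$, then $v \le f \le u$ gives $u \otimes v = v$, hence $v = 0$ and we are done. Moreover, if $v$ is idempotent ($e = v = f$), then $u > e = v$ combined with $v \le v \le u$ gives $u \otimes v \ge v \otimes v = v$, so again $v = 0$. Thus the only remaining situation is $v$ non-idempotent with $e < v < f$ and $e < u < f$, so both $u$ and $v$ lie in the open interval $(e,f)$.

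In this main case I would invoke Theorem~\ref{thm.cont_quantale_unit} on $[e,f]$. A direct computation using monotonicity and the lemma shows $u \otimes v \ge e \otimes v = e$, and since $u \otimes v = 0$ this forces $e = 0$. Now I transport the situation across a quantale isomorphism $\varphi\colon[0,f] \to [0,1]$. In the multiplication case $\varphi(u)\cdot\varphi(v) = 0$ forces $u = 0$ or $v = 0$, both contradicting the standing hypotheses; so this sub-case is vacuous. In the \L{}ukasiewicz case, since $\varphi(v) < 1$ (because $v < f$), the standard formula $a^{\luk n} = \max(0,1 - n(1-a))$ gives $\varphi(v)^{\luk n} = 0$ for any $n \ge \lceil 1/(1 - \varphi(v))\rceil$, and transporting back yields $v^n = 0$, as required.

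The delicate point — and the main obstacle — is precisely the passage from the qualitative ``$v$ is not idempotent'' to the quantitative conclusion ``$v^n = 0$'': it is not enough to observe that the decreasing sequence $(v^n)$ converges to an idempotent which one forces to be $0$, because limits need not be attained in finitely many steps. The classification theorem is essential here, since it is exactly what rules out the multiplication case (which would produce a strictly positive limit without a finite vanishing index) and identifies the \L{}ukasiewicz case as the only one compatible with $u\otimes v = 0$. Finally, the second assertion is a direct consequence: assuming no nilpotent elements and $v \neq 0$, the condition $v^n = 0$ would exhibit $v$ as nilpotent, so we must be in the first alternative $u = 0$.
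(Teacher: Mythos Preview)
Your proof is correct and follows essentially the same strategy as the paper: use the lemma on idempotents to clear away the easy cases, force the relevant interval to be $[0,f]$ with no interior idempotents, apply the classification theorem, and then dispatch the multiplication and \L{}ukasiewicz cases separately. The only difference is organizational: the paper anchors the case split on $u$ (asking whether some idempotent $e$ satisfies $0<e\le u$), whereas you anchor it on $v$ by taking the nearest idempotents $e,f$ around $v$; both routes land on the same endgame with $[0,f]$.
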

\begin{proof}
  Assume $u>0$. The assertion is clear if there is some idempotent $e$
  with $0<e\le u$. If there is no $e\in E$ with $0<e\le u$, then there
  is some $f\in E$ with $u<f$ and $[0,f]$ is isomorphic to $[0,1]$
  with either multiplication or \L{}ukasiewicz tensor. Since
  $u\otimes v=0$, $v<f$. If $[0,f]$ is isomorphic to $[0,1]$ with
  multiplication, then $v=0$; otherwise there is some $n\in\N$ with
  $v^n=0$.
\end{proof}

Conversely, continuous quantale structures on $[0,1]$ can be defined
piecewise:

\begin{theorem}
  Let $\otimes_i$ ($i\in I$) be a family of continuous quantale
  structures on $[0,1]$ with neutral element $1$ and $(a_i)_{i\in I}$
  and $(b_i)_{i\in I}$ be families of elements of $[0,1]$ so that the
  open intervals $]a_i,b_i[$ are pairwise disjoint. The operation
  \[
    x\otimes y=
    \begin{cases}
      a_i+(b_i-a_i)\cdot\left(\left(\frac{x-a_i}{b_i-a_i}\right)\otimes_i\left(\frac{y-a_i}{b_i-a_i}\right)\right) & \text{if $x,y\in[a_i,b_i]$,}\\
      x\wedge y & \text{otherwise,}
    \end{cases}
  \]
  defines a continuous t-norm on $[0,1]$; the right adjoint
  $\hom(x,-)$ of $x\otimes-$ is defined by
  \[
    \hom(x,y)=
    \begin{cases}
      1 & \text{if $x\le y$,}\\
      a_i+(b_i-a_i)\cdot\hom_i\left(\frac{x-a_i}{b_i-a_i},\frac{y-a_i}{b_i-a_i}\right) & \text{if $x,y\in[a_i,b_i]$,}\\
      y & \text{otherwise.}
    \end{cases}
  \]
\end{theorem}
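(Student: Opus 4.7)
The plan is to verify the quantale axioms for $\otimes$ and the adjunction formula for $\hom$. The key preliminary observation is that, for each $i$ and $x, y \in [a_i, b_i]$, the scaled expression
\[
\varphi_i(x, y) := a_i + (b_i - a_i)\left(\tfrac{x - a_i}{b_i - a_i} \otimes_i \tfrac{y - a_i}{b_i - a_i}\right)
\]
lies in $[a_i, b_i]$ and satisfies $\varphi_i(x, y) \leq x \wedge y$, since every continuous t-norm satisfies $u \otimes_i v \leq u \wedge v$. Moreover, when $x$ or $y$ lies at an endpoint $a_i$ or $b_i$, one has $\varphi_i(x, y) = x \wedge y$, using $0 \otimes_i v = 0$ and $1 \otimes_i v = v$. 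Since disjointness of the open intervals $]a_i, b_i[$ forces distinct closed intervals $[a_i, b_i]$ and $[a_j, b_j]$ to meet only at shared endpoints, this endpoint compatibility shows that $\otimes$ is well-defined independently of the clause chosen.

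Commutativity of $\otimes$ follows from commutativity of each $\otimes_i$ and of $\wedge$. The unit axiom $1 \otimes y = y$ is settled by considering separately whether there is some $i$ with $b_i = 1$ and $y \in [a_i, 1]$ (direct computation using $1 \otimes_i v = v$ gives $y$) or not ($1 \otimes y = 1 \wedge y = y$). For associativity, using commutativity one reduces to a case analysis on which of $x, y, z$ share a common interval $[a_i, b_i]$. If all three do, associativity follows from that of $\otimes_i$ after rescaling. If only two of the arguments share a common $[a_i, b_i]$, the fact that $\varphi_i$ takes values in $[a_i, b_i]$ together with the endpoint compatibility above let one check that both iterated products collapse to the same value. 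If no two share a common interval, both sides equal $x \wedge y \wedge z$. The main obstacle here is the careful bookkeeping at endpoints where two closed intervals touch, and it is dissolved precisely by the compatibility observation.

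Continuity of $\otimes$ on $[0,1]^2$ is a gluing argument: on each closed square $[a_i, b_i] \times [a_i, b_i]$ the operation equals the continuous function $\varphi_i$; outside the union of these squares it equals $\wedge$; and endpoint compatibility ensures continuity where the regions meet. Since $\otimes$ is continuous and monotone on the compact lattice $[0,1]$, each $x \otimes (-)$ preserves suprema (via convergent monotone sequences), and the right adjoint $\hom(x, -)$ exists. The stated formula is then verified by direct inspection as the largest $z$ with $x \otimes z \leq y$: if $x \leq y$, every $z$ works since $x \otimes z \leq x \leq y$; if $x, y \in [a_i, b_i]$ with $x > y$, testing $z \in [a_i, b_i]$ reduces $x \otimes z \leq y$ to $\tilde{z} \leq \hom_i(\tilde{x}, \tilde{y})$ (with tildes denoting the rescaling into $[0,1]$), while $z \notin [a_i, b_i]$ forces $z \leq y$, the former bound dominating because $\hom_i(\tilde{x}, \tilde{y}) \geq \tilde{y}$; in the remaining case, $x \otimes z = x \wedge z$ and the condition $\leq y < x$ is equivalent to $z \leq y$.
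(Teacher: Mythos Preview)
The paper does not prove this theorem: it is stated in Section~\ref{sec:cont-quantale-unit} as part of a review of the classification of continuous t-norms, with the details deferred to \citep{AFS06}. Your proposal is the standard direct verification of the ordinal-sum construction, and the overall structure is correct.

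One point deserves a little more care. In the final case of the $\hom$ computation you assert that ``$x\otimes z=x\wedge z$'' whenever $x>y$ and there is no $i$ with $x,y\in[a_i,b_i]$. That is not literally true for all $z$: if $x\in\,]a_i,b_i[$ (so necessarily $y<a_i$) and also $z\in[a_i,b_i]$, then $x\otimes z=\varphi_i(x,z)$ rather than $x\wedge z$. The conclusion $\hom(x,y)=y$ still holds, because in that situation $\varphi_i(x,z)\ge a_i>y$, so such $z$ never satisfy $x\otimes z\le y$; and for $z<a_i$ the disjointness of the open intervals forces $x\otimes z=z$. Splitting this case accordingly closes the gap. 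The continuity argument is also somewhat terse (the index set $I$ may be infinite, so the gluing is not over a finite closed cover), but it goes through once one argues pointwise using monotonicity of $\otimes$ and the endpoint compatibility you already established.
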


In conclusion, the results of this section show that every continuous
t-norm on $[0,1]$ is obtained as a combination of infimum,
multiplication and \L{}ukasiewicz tensor.

\section{Stably compact spaces, partially ordered compact spaces, and
  Vietoris monads}\label{sec:stably-compact-Vietoris}

The notion of Vietoris monad plays a key role in the duality results
presented beginning from
Section~\ref{sec:duality-distributors}. Recall that, for a topological
space $X$, the \df{lower Vietoris space} of $X$ is the space
\[
  VX=\{A\subseteq X\mid A\text{ is closed}\}
\]
of closed subsets of $X$ with the topology generated by the sets
\[
  \{A\in VX\mid A\cap B\neq\varnothing\},
\]
where $B$ ranges over all open subsets $B\subseteq X$. Moreover, for a
continuous map $f:X\to Y$, the map
\[
  Vf:VX\longrightarrow VY,\,A\longmapsto \overline{f(A)}
\]
is continuous as well. These constructions define an endofunctor
$V:\TOP\to\TOP$ on the category $\TOP$ of topological spaces and
continuous maps that is part of the so-called \df{lower Vietoris
  monad} $\mV=\vmonad$ on $\TOP$; the unit $e$ and the multiplication
$m$ of $\mV$ have components
\begin{align*}
  e_X:X\longrightarrow VX,\,x\longmapsto\overline{\{x\}} &&\text{and}&& m_X:VVX\longrightarrow VX,\,\calA\longmapsto\bigcup\calA.
\end{align*}
For more information about this construction we refer to
\citep[Section 6.3]{Sch93}. 

Every topology on a set $X$ defines a \df{natural order relation} on
$X$: $x\le y$ if $x$ belongs to every neighborhood of $y$; in other
words, if the principal ultrafilter $\doo{x}$ converges to
$y$. Therefore we can use order-theoretic notions in topological
spaces. For instance, a subset $A$ of $X$ is called \df{lower}
whenever every $y\in X$ below some $x\in A$ belongs to $A$; the notion
of \df{upper} subset is defined dually.

\begin{example}
  For every topological space $X$, the underlying order of $VX$ is
  given by containment $\supseteq$.
\end{example}

A topological space $X$ is called \df{stably compact} (see
\citep{GHK+80,GHK+03}) whenever $X$ is sober, locally compact and
every finite intersection of compact lower subsets is compact. Stably
compact spaces are the objects of the category $\STCOMP$, whose
morphisms are the continuous maps with the property that the preimage
of a compact lower subset is compact. This kind of maps between stably
compact spaces are called \df{spectral}. We alert the reader that the
designation \emph{proper map} is also used in the literature for this
type of map (for instance in \citep{GHK+03}) which clashes with the
classical notion of proper mapping in topology (see \citep{Bou66}). It
is well-known (see \citep{Sch93}, for instance) that the monad $\mV$
on $\TOP$ restricts to a monad on $\STCOMP$, also denoted by
$\mV=\vmonad$.

There is a close connection between stably compact spaces and
partially ordered compact spaces which was first exposed in
\citep{GHK+80}; the notion of partially ordered compact space was
introduced in \citep{Nac50}. We recall that a partially ordered
compact space consists of a compact space $X$ equipped with an order
relation $\le$ so that
\[
  \{(x,y)\mid x\le y\}\subseteq X\times X
\]
is a closed subset of the product space $X \times X$. We denote by
$\POSCH$ the category of partially ordered compact spaces and monotone
continuous maps.

\begin{remark}
  It follows immediately from the definition that every partially
  ordered compact space is Hausdorff since the diagonal
  \[
    \Delta=\{(x,y)\mid x\leq y\}\cap\{(x,y)\mid y\leq x\}
  \]
  is a closed subset of $X\times X$.
\end{remark}

Given a partially ordered compact space $X$, keeping its topology but
taking now its dual order produces also a partially ordered compact
space, denoted by $X^\op$. Of particular interest to us are the
partially ordered compact space $[0,1]$ with the Euclidean topology
and the usual ``less or equal'' relation, and its dual partially ordered
compact space $[0,1]^\op$.

Below we collect some facts about these structures which can be found
in or follow from \citep[Proposition 4 and Theorems 1 and 4]{Nac65}.

\begin{proposition}
  \label{comp_closed}
  If $A$ is a compact subset of a partially ordered compact space $X$ then
  the sets
  \[
    \upc A=\{y\in X\mid y\ge x\text{ for some }x\in A\} \hspace{1em}\text{and}\hspace{1em}
    \downc A=\{y\in X\mid y\le x\text{ for some }x\in A\}
  \]
  are closed.
\end{proposition}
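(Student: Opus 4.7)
The plan is to realise $\upc A$ as the image of a closed set under a closed map, and dually for $\downc A$. Let $R=\{(x,y)\in X\times X\mid x\le y\}$; by definition of a partially ordered compact space, $R$ is closed in $X\times X$. The crucial elementary observation is
\[
  \upc A \;=\; \pi_2\bigl(R\cap(A\times X)\bigr),
\]
where $\pi_2:X\times X\to X$ is projection onto the second factor; indeed, $y\in\upc A$ exactly when there exists $x\in A$ with $(x,y)\in R$.

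Next I would verify the two closedness claims on which this depends. First, $R\cap(A\times X)$ is closed in the subspace $A\times X$ simply because $R$ is closed in $X\times X$. Second, and this is the step that really uses the hypothesis on $A$, the restricted projection $\pi_2\colon A\times X\to X$ is a closed map: this is the standard fact that projection along a compact factor is closed (the so-called tube lemma). Combining these two points, $\upc A$ is the image of a closed subset of $A\times X$ under a closed map, hence closed in $X$.

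For $\downc A$ I would run the symmetric argument, either by projecting $R\cap(X\times A)$ along the first coordinate (again a closed map since $A$ is compact), or more slickly by applying what has just been proved to the partially ordered compact space $X^\op$, whose upward closure of $A$ is $\downc A$ in $X$.

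The proof is essentially bookkeeping once one recognises the right description of $\upc A$; the only non-trivial ingredient is the closed-projection theorem for compact factors, which is standard topology and therefore not really an obstacle. No compactness of $X$ itself is needed beyond what is built into the definition of a partially ordered compact space, and the argument does not even require $X$ to be Hausdorff, although of course it is.
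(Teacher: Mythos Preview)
Your argument is correct and is in fact the standard proof of this result. The paper does not supply its own proof of this proposition: it merely records the statement as one of several ``facts about these structures which can be found in or follow from \citep[Proposition~4 and Theorems~1 and~4]{Nac65}'', so there is nothing to compare against beyond noting that your approach is precisely the classical one.
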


\begin{corollary}
  Let $A$ be a subset of a partially ordered compact space $X$. Then,
  $\ouc{A} \subseteq X$ is the smallest closed upper subset containing
  $A$.
\end{corollary}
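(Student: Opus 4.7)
The plan is to deduce this directly from Proposition~\ref{comp_closed} together with the usual closure properties. First I would verify that $\ouc{A}=\upc\overline{A}$ is indeed a closed upper subset containing $A$, and then check minimality by a standard two-line argument.

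For the first part, note that $\overline{A}$ is a closed subset of the compact space $X$, hence compact. By Proposition~\ref{comp_closed}, its up-closure $\upc\overline{A}$ is closed in $X$. The set $\upc\overline{A}$ is upper by construction, and the chain of inclusions $A\subseteq\overline{A}\subseteq\upc\overline{A}$ shows that it contains $A$.

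For minimality, let $C\subseteq X$ be any closed upper subset with $A\subseteq C$. Since $C$ is closed, $\overline{A}\subseteq C$; since $C$ is upper, $\upc\overline{A}\subseteq\upc C=C$. Hence $\ouc{A}\subseteq C$, which is exactly the required minimality.

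There is essentially no obstacle here: the only non-trivial ingredient is the closedness of $\upc\overline{A}$, and that is precisely the content of Proposition~\ref{comp_closed} applied to the compact set $\overline{A}$. The remainder is a purely formal manipulation of closure and up-closure.
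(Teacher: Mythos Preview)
Your proof is correct and is exactly the argument the paper intends: the corollary is stated without proof, as an immediate consequence of Proposition~\ref{comp_closed}, and your write-up spells out precisely that deduction. There is nothing to add.
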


\begin{proposition}\label{prop:orderUri}
  Let $A$ and $B$ be subsets of a partially ordered compact space $X$
  such that $A$ is a closed upper set, $B$ is a closed lower set and
  $A \cap B = \varnothing$. Then there exists a continuous and
  monotone function $\psi: X \to [0,1]$ such that $\psi(x)=1$ for every
  $x \in A$, and $ \psi(x)=0$ for every $x \in B$.
\end{proposition}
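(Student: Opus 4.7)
The plan is to adapt the classical Urysohn lemma to the ordered setting. The essential preparatory fact is \emph{order-normality}: any disjoint pair consisting of a closed upper set $A$ and a closed lower set $B$ in a partially ordered compact space $X$ can be separated by an open upper set $W\supseteq A$ and an open lower set $V\supseteq B$. I would derive this from the closedness of ${\le}$ in $X\times X$ together with Proposition~\ref{comp_closed}: for each $(a,b)\in A\times B$ one has $a\not\le b$, so there exist open neighborhoods $U\ni a$ and $V\ni b$ with $\upc U\cap\downc V=\varnothing$; two successive compactness arguments (first on $A$ with $b$ fixed, then on $B$), combined with the fact that $\upc F$ and $\downc F$ are closed whenever $F$ is closed, then yield the desired global open upper and open lower separation.

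Equivalently, whenever a closed upper set $A$ is contained in an open upper set $W$, order-normality provides an open upper set $U$ and a closed upper set $C$ with $A\subseteq U\subseteq C\subseteq W$. Starting from the inclusion $A\subseteq X\setminus B$ (whose right-hand side is open upper because $B$ is closed lower), I would iterate this interpolation along the dyadic rationals in $(0,1)$ to produce families $(U_r)_{r}$ and $(C_r)_{r}$ indexed by $r\in\Q\cap(0,1)$ satisfying $A\subseteq U_r\subseteq C_r\subseteq X\setminus B$ together with the nesting condition
\[
  r<s\;\Longrightarrow\;C_s\subseteq U_r.
\]
This is a verbatim transposition of the classical dyadic construction.

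Finally, define
\[
  \psi(x)=\sup\{r\in\Q\cap(0,1)\mid x\in C_r\},
\]
with the convention $\sup\varnothing=0$. Then $\psi(x)=1$ for $x\in A$ because $A\subseteq C_r$ for every $r$; $\psi(x)=0$ for $x\in B$ because each $C_r$ avoids $B$; and $\psi$ is monotone since $x\le y$ with $x\in C_r$ forces $y\in C_r$ (as $C_r$ is upper). Continuity of $\psi$ follows from the identities
\[
  \{x\mid\psi(x)>\alpha\}=\bigcup_{r>\alpha}U_r,\qquad\{x\mid\psi(x)<\alpha\}=\bigcup_{r<\alpha}(X\setminus C_r),
\]
both manifestly open. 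The main obstacle is the order-normality step: it is where the asymmetric interplay between the order and the topology must be handled, and it is what replaces the usual normality of compact Hausdorff spaces in the classical Urysohn argument. Once it is in place, the dyadic interpolation and the verification of the properties of $\psi$ are routine transcriptions of the classical proof.
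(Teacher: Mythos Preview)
Your argument is correct and is precisely the classical route due to Nachbin. Note, however, that the paper does not supply its own proof of this proposition: it merely records it among the ``facts about these structures which can be found in or follow from \citep[Proposition~4 and Theorems~1 and~4]{Nac65}''. What you have written is essentially a reconstruction of Nachbin's proof: first establish that every partially ordered compact space is \emph{normally ordered} (this is Nachbin's Theorem~4), and then run the dyadic Urysohn scheme with open upper sets in place of arbitrary open sets (this is Nachbin's Theorem~1). Your sketch of the order-normality step is a little compressed---after the two compactness passes one has open $U\supseteq A$ and $V\supseteq B$ with $(U\times V)\cap{\le}=\varnothing$, and one still has to replace them by the open upper set $X\setminus\downc(X\setminus U)$ and the open lower set $X\setminus\upc(X\setminus V)$, using Proposition~\ref{comp_closed} and the fact that $A$ is upper and $B$ is lower to verify the required inclusions---but you clearly signal this when you invoke the closedness of $\upc F$ and $\downc F$. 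The remainder of the argument is the standard Urysohn construction and needs no comment.
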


These results imply immediately:

\begin{proposition}\label{prop:initial-cogenerator}
  The partially ordered compact space $[0,1]$ is an initial
  cogenerator in $\POSCH$; that is, for every partially ordered
  compact space $X$, the cone $(\psi:X\to[0,1])_\psi$ of all morphisms
  from $X$ to $[0,1]$ is point-separating and initial with respect to
  the canonical forgetful functor $\POSCH\to\SET$ (see
  \citep{Tho09,HN15} for a description of initial cones in
  $\POSCH$). Since $[0,1]\simeq[0,1]^\op$ in $\POSCH$, also
  $[0,1]^\op$ is an initial cogenerator in $\POSCH$.
\end{proposition}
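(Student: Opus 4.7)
The plan is to derive both claims directly from the order-theoretic Urysohn-type Proposition~\ref{prop:orderUri}, using Proposition~\ref{comp_closed} to produce the closed sets it requires. For point-separation, let $x \neq y$ in $X$. By antisymmetry either $x \not\le y$ or $y \not\le x$; by symmetry suppose the former. The singletons $\{x\}$ and $\{y\}$ are compact, so Proposition~\ref{comp_closed} gives $\upc\{x\}$ closed upper and $\downc\{y\}$ closed lower. These two sets are disjoint, for otherwise some $z$ would satisfy $x \le z \le y$, contradicting $x \not\le y$. Proposition~\ref{prop:orderUri} then supplies a morphism $\psi: X \to [0,1]$ in $\POSCH$ with $\psi(x) = 1$ and $\psi(y) = 0$.

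For initiality with respect to $U: \POSCH \to \SET$, let $Y \in \POSCH$ and let $h: U(Y) \to U(X)$ be a set map such that every composite $\psi \circ h: Y \to [0,1]$ lifts to a $\POSCH$-morphism; the goal is to show $h$ itself is monotone and continuous. Monotonicity is immediate from the separation just established: if $y_1 \le y_2$ in $Y$ but $h(y_1) \not\le h(y_2)$ in $X$, then the previous paragraph applied to $h(y_1), h(y_2)$ produces $\psi$ with $\psi(h(y_1)) = 1 > 0 = \psi(h(y_2))$, contradicting monotonicity of $\psi \circ h$. For continuity, fix a convergent net $y_\alpha \to y$ in $Y$; since $X$ is compact Hausdorff it is enough to show that every convergent subnet $h(y_\gamma) \to z$ satisfies $z = h(y)$. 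Applying any $\psi$, continuity of $\psi$ and of $\psi \circ h$ give $\psi(z) = \lim \psi(h(y_\gamma)) = \psi(h(y))$, and point-separation forces $z = h(y)$.

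The final clause about $[0,1]^\op$ is immediate from the fact that $t \mapsto 1 - t$ is a $\POSCH$-isomorphism $[0,1] \to [0,1]^\op$, so composing any separating/initial family into $[0,1]$ with this isomorphism yields a corresponding family into $[0,1]^\op$. The whole argument rests on Proposition~\ref{prop:orderUri}, and no step is genuinely hard; the only mildly technical point is promoting pointwise continuity along the cone to continuity of $h$, which the standard compact-Hausdorff subnet argument handles cleanly.
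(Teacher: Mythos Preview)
Your proof is correct and is precisely the argument the paper has in mind: the paper gives no proof at all beyond the sentence ``These results imply immediately'', referring to Proposition~\ref{comp_closed} and Proposition~\ref{prop:orderUri}, and your write-up unpacks that implication in the expected way. The only minor remark is that the paper points to \citep{Tho09,HN15} for a concrete description of initial cones in $\POSCH$, which would allow one to verify initiality by checking that the order and topology on $X$ coincide with the ones induced by the cone; your direct verification of the morphism-lifting property via the compact-Hausdorff subnet argument is an equally valid and entirely standard route.
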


Using the results above, we are able to characterise epimorphisms and
regular monomorphisms in $\POSCH$.

\begin{proposition}
The regular monomorphisms in $\POSCH$ are precisely the embedding.
\end{proposition}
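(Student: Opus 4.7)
The plan is to establish both inclusions separately.

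For the direction that every regular monomorphism is an embedding, I would observe that the forgetful functor $\POSCH\to\SET$ preserves limits, so the equaliser of $f,g:X\to Y$ in $\POSCH$ has underlying set $\{x\in X\mid f(x)=g(x)\}$ endowed with the subspace topology and the induced order. Since partially ordered compact spaces have closed diagonals and are in particular Hausdorff, this set is closed in $X$, and the equaliser inclusion is by construction an embedding.

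For the converse, let $m:A\hookrightarrow X$ be an embedding, which I identify with a closed subspace inclusion. My plan is to realise $A$ as the equaliser of the cokernel pair $p_1,p_2:X\to P$ of $m$, which exists since $\POSCH$ is cocomplete. The inclusion $A\subseteq\mathrm{eq}(p_1,p_2)$ is immediate. For the reverse inclusion, by the initial cogenerator property of $[0,1]$ (Proposition~\ref{prop:initial-cogenerator}) together with the universal property of the pushout, it suffices to prove the following separation claim: for every $x\in X\setminus A$, there exist morphisms $h_1,h_2:X\to[0,1]$ in $\POSCH$ with $h_1|_A=h_2|_A$ and $h_1(x)\neq h_2(x)$.

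To verify the separation claim, I would proceed by cases. If $x\notin\upc A$, then $\upc A$ and $\downc\{x\}$ are disjoint closed upper and lower subsets of $X$ by Proposition~\ref{comp_closed}, and Proposition~\ref{prop:orderUri} provides $\psi:X\to[0,1]$ with $\psi|_A=1$ and $\psi(x)=0$; I would pair $\psi$ with the constant morphism $1$. The case $x\notin\downc A$ is symmetric. In the remaining case $x\in\upc A\cap\downc A$ with $x\notin A$, I would set $A_1=A\cap\downc\{x\}$ and $A_2=A\cap\upc\{x\}$ (both nonempty and closed), check that $\upc A_2$ and $\downc A_1$ are disjoint in $A$ using $x\notin A$, and apply Proposition~\ref{prop:orderUri} inside $A$ to obtain $\varphi:A\to[0,1]$ with $\varphi|_{A_1}=0$ and $\varphi|_{A_2}=1$. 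Since elements of $A$ incomparable with $x$ impose no monotonicity constraint, $\varphi$ admits monotone continuous extensions $\varphi_c:A\cup\{x\}\to[0,1]$ taking any prescribed value $c\in[0,1]$ at $x$. An ordered Tietze-type extension argument, obtained by iterating Proposition~\ref{prop:orderUri} along the lines of Nachbin, then lifts $\varphi_0$ and $\varphi_1$ to morphisms $h_1,h_2:X\to[0,1]$ satisfying $h_1|_A=\varphi=h_2|_A$ and $h_1(x)=0\neq 1=h_2(x)$.

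The main obstacle is precisely this last case: no monotone continuous function on $X$ can be constant on $A$ and non-constant at $x$ when $x$ is sandwiched between elements of $A$, since monotonicity forces the same value. The proof must therefore produce \emph{non-constant} but coinciding boundary data on $A$ via an internal Urysohn argument and then exploit the non-uniqueness of monotone continuous extensions to create the disagreement at $x$. Justifying the ordered Tietze-type extension from Proposition~\ref{prop:orderUri} is the principal technical step I would need to verify in detail.
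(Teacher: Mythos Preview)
Your proposal is correct and follows essentially the same strategy as the paper's proof. The paper treats all points $z\notin A$ uniformly---your cases 1 and 2 are absorbed into case 3 when $A\cap\downc z$ or $A\cap\upc z$ happens to be empty---and dispatches the ordered Tietze-type extension you flag as the principal technical step by citing \citep[Theorem~6]{Nac65} directly, rather than deriving it from Proposition~\ref{prop:orderUri}.
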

\begin{proof}
  Clearly, every regular monomorphism is an embedding. We show that
  the converse implication follows from \citep[Theorem 6]{Nac65}. Let
  $f:X\to Y$ be an embedding in $\POSCH$ and assume that $z\notin A$
  where $A=f[X]$. Consider
  \begin{align*}
    A_0&=A\cap\downc z, & A_1=A\cap\upc z,
  \end{align*}
  hence $A_0$ and $A_1$ are closed and every element of $A_0$ is
  strictly below every element of $A_1$. Therefore the map
  \[
    g:A_0\cup A_1 \to [0,1],\,x\mapsto
    \begin{cases}
      0 & \text{if $x\in A_0$,}\\
      1 & \text{if $x\in A_1$}
    \end{cases}
  \]
  is monotone and continuous. By \citep[Theorem 6]{Nac65}, $g$ can be
  extended to a continuous and monotone map $g:A\to[0,1]$, and with
  $g_0(z)=0$ and $g_1(z)=1$ extend $g$ to continuous and monotone maps
  $g_0,g_1:A\cup\{z\}\to[0,1]$. Applying \citep[Theorem 6]{Nac65}
  again yields a morphisms $g_0,g_1:Y\to[0,1]$, therefore we can
  construct morphisms $g_0,g_1:Y\to [0,1]$ with $g_0(z)\neq g_1(z)$
  and which coincided on the elements of $A$.
\end{proof}
\begin{corollary}
  The epimorphisms in $\POSCH$ are precisely the surjections.
\end{corollary}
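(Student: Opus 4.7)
The plan is to handle the two inclusions separately. That every surjection is an epimorphism is routine: the forgetful functor $\POSCH\to\SET$ is faithful, so morphisms agreeing on all points of $X$ must be equal, and thus any surjective $f$ satisfies the cancellation property. The real content lies in the converse, which I would deduce as a formal consequence of the previous proposition characterising regular monomorphisms.

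First I would establish that $\POSCH$ admits (surjection, embedding)-factorisations. Given a morphism $f\colon X\to Y$ in $\POSCH$, consider $f=m\circ e$ where $e\colon X\to f[X]$ is the corestriction and $m\colon f[X]\hookrightarrow Y$ is the inclusion, with $f[X]$ carrying the subspace topology and the order induced from $Y$. Since $X$ is compact and $f$ continuous, $f[X]$ is compact; since $Y$ is Hausdorff, $f[X]$ is closed in $Y$; and since the order on $Y$ is closed in $Y\times Y$, its restriction to $f[X]\times f[X]$ is closed there. Hence $f[X]$ is an object of $\POSCH$ and $m$ is an embedding, which by the previous proposition is a regular monomorphism.

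Now suppose $f$ is an epimorphism. From $f=m\circ e$ and right-cancellation, $m$ is also an epimorphism. But any morphism that is simultaneously a regular monomorphism and an epimorphism is an isomorphism: if $m$ equalises a parallel pair $g_0,g_1$ and $m$ is epi, then $g_0m=g_1m$ forces $g_0=g_1$, so $m$ is the equaliser of a pair of equal arrows, which is (up to iso) the identity of its codomain, hence an isomorphism. Therefore $f[X]=Y$, and $f$ is surjective.

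The argument is essentially formal once the factorisation is in place; the only point requiring any verification, and hence the main obstacle, is confirming that $(f[X],\leq)$ equipped with the subspace structure is indeed a partially ordered compact space and that $m$ is an embedding, so that the previous proposition applies.
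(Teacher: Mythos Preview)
Your proposal is correct and follows essentially the same approach as the paper's proof: factor $f=m\cdot e$ with $e$ surjective and $m$ an embedding, invoke the preceding proposition to see that $m$ is a regular monomorphism, observe that $m$ inherits the epimorphism property from $f$, and conclude that $m$ is an isomorphism. Your version supplies more detail (the explicit verification that $f[X]$ is an object of $\POSCH$ and the argument that regular mono $+$ epi implies iso), but the strategy is identical.
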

\begin{proof}
  Clearly, every surjective morphism of $\POSCH$ is an epimorphism.
  Let $f:X\to Y$ be an epimorphism in $\POSCH$, we consider its
  factorisation $f=m\cdot e$ in $\POSCH$ with $e$ surjective and $m$
  an embedding. Hence, since $m$ is a regular monomorphism and an
  epimorphism, we conclude that $m$ is an isomorphism and therefore
  $f$ is surjective.
\end{proof}

The category $\POSCH$ is isomorphic to the category $\STCOMP$ of
stably compact spaces and spectral functions, for details we refer to
\citep{GHK+03}. Under this isomorphism, a partially ordered compact
space $X$ corresponds to the stably compact space with the same
underlying set and open sets precisely the open lower subsets of
$X$. In the reverse direction, a stably compact space $X$ defines a
partially ordered compact space whose order relation is the natural
order of $X$, and whose compact Hausdorff topology is the topology
generated by the open subsets and the compact lower subsets of $X$.
Therefore we can transfer the lower Vietoris monad on $\STCOMP$ to a
monad $\mV=\vmonad$ in $\POSCH$. Specifically, for a partially ordered
compact space $X$, the elements of $VX$ are the closed upper subsets
of $X$, the order on $VX$ is containment $\supseteq$, and the
sets
\begin{align*}
  \{A\in VX\mid A\cap U\neq\varnothing\}\hspace{1em}\text{($U\subseteq
  X$ open lower)} &&\text{and}&& \{A\in VX\mid A\cap
                                 K=\varnothing\}\hspace{1em}\text{($K\subseteq
                                 X$ closed lower)}.
\end{align*}
generated the associated compact Hausdorff topology. Furthermore, from
the monad $\mV$ on $\STCOMP$ we obtain a monad $\mV=\vmonad$ on the
category $\COMPHAUS$ of compact Hausdorff spaces and continuous maps
via the canonical adjunction
\[
  \POSCH\adjunct{\text{discrete}}{\text{forgetful}}\COMPHAUS.
\]
The functor $V:\COMPHAUS\to\COMPHAUS$ sends a compact Hausdorff space
$X$ to the space
\[
  V X=\{A\subseteq X\mid A\text{ is closed}\}
\]
with the topology generated by the sets
\begin{align*}
  \{A\in VX\mid A\cap U\neq\varnothing\}\hspace{1em}\text{($U\subseteq X$ open)} &&\text{and}&& \{A\in VX\mid A\cap K=\varnothing\}\hspace{1em}\text{($K\subseteq X$ closed)};
\end{align*}
for $f:X\to Y$ in $\COMPHAUS$, $V f:VX\to VY$ sends $A$ to
$f[A]$. We note that this is indeed the original construction
introduced by Vietoris in \citep{Vie22}. The unit $e$ and the
multiplication $m$ of $\mV$ are given by
\begin{align*}
 e_X:X\to VX,\,x\mapsto\{x\} &&\text{and}&& m_X:VV X\to V X,\,\calA\mapsto\bigcup\calA.
\end{align*}

In the next section we will be interested in the Kleisli categories
$\POSCH_{\mV}$ and $\COMPHAUS_{\mV}$. A morphism $X\to V Y$ in
$\COMPHAUS$ corresponds to a relation $X\relto Y$, and we refer to
those relations between compact Hausdorff spaces coming from morphisms
in $\COMPHAUS_{\mV}$ as \df{continuous relations}. Likewise, a
morphism $X\to VY$ in $\POSCH$ corresponds to a distributor between
the underlying partially ordered sets; we will call such a distributor
a \df{continuous distributor} of partially ordered compact
spaces. Furthermore, in both cases composition in the Kleisli category
corresponds to relational composition, therefore we will identify
$\COMPHAUS_{\mV}$ with the category $\COMPHAUSREL$ of compact
Hausdorff spaces and continuous relations, and $\POSCH_{\mV}$ with the
category $\POSCHDIST$ of partially ordered compact spaces and
continuous distributors. In the latter case, the identity morphism on
a partially ordered compact space is its order relation. Also note
that $\COMPHAUSREL$ is isomorphic to the full subcategory of
$\POSCHDIST$ determined by the discretely ordered compact spaces.

\section{Dual adjunctions}
\label{sec:dual-adjunctions}

In this section we present some well-known results about the structure
and construction of dual adjunctions. There is a vast literature on
this subject, we mention here \citep{LR78,LR79}, \citep{DT89},
\citep{PT91}, \citep{Joh86} and \citep{CD98}.

We start by considering an adjunction
\begin{equation}\label{eq:dual-adjunction}
  \catX\adjunctop{G}{F}\catA^\op
\end{equation}
between a category $\catX$ and the dual of a category $\catA$. In
general, such an adjunction is not an equivalence. Nevertheless, one
can always consider its restriction to the full subcategories
$\Fix(\eta)$ and $\Fix(\varepsilon)$ of $\catX$ respectively $\catA$, defined
by the classes of objects
\begin{align*}
  \{X\mid \eta_X\text{ is an isomorphism}\}
  &&\text{and}&&
                 \{A\mid \varepsilon_A\text{ is an isomorphism}\},
\end{align*}
where it yields an equivalence $\Fix(\eta)\simeq\Fix(\varepsilon)^\op$. The
passage from $\catX$ to $\Fix(\eta)$ is only useful if this
subcategory contains all ``interesting objects''. This, however, is
not always the case; $\Fix(\eta)$ can be even empty. \emph{En passant}
we mention that $\Fix(\eta)$ is a reflective subcategory of $\catX$
provided that $\eta G$ is an isomorphism; likewise, $\Fix(\varepsilon)$ is a
reflective subcategory of $\catA$ provided that $\varepsilon F$ is an
isomorphism. Moreover, $\eta G$ is an isomorphism if and only if
$\varepsilon F$ is an isomorphism (see \citep[Theorem 2.0]{LR79} for
details).

Throughout this section we assume that $\catX$ and $\catA$ are
equipped with faithful functors
\begin{align*}
  |-|:\catX \longrightarrow\SET &&\text{and}&& |-|:\catA \longrightarrow\SET.
\end{align*}
\begin{definition}
  The adjunction \eqref{eq:dual-adjunction} is \df{induced by the
    dualising object} $(\widetilde{X},\widetilde{A})$, with objects
  $\widetilde{X}$ in $\catX$ and $\widetilde{A}$ in $\catA$, whenever
  $|\widetilde{X}|=|\widetilde{A}|$, $|F|=\hom(-,\widetilde{X})$,
  $|G|=\hom(-,\widetilde{A})$ and the units are given by
  \begin{align}\label{eq:ev-maps}
    \eta_X:X &\longrightarrow GFX &\text{and}&& \varepsilon_A:A&\longrightarrow FGA;\\
    x&\longmapsto \ev_x &&& a&\longmapsto \ev_a\notag
  \end{align}
  with $\ev_x$ and $\ev_a$ denoting the evaluation maps.
\end{definition}
If the forgetful functors to $\SET$ are representable by objects $X_0$
in $\catX$ and $A_0$ in $\catA$, then every adjunction
\eqref{eq:dual-adjunction} is of this form, up to natural equivalence
(see \citep{DT89} and \citep{PT91}).
\begin{remark}\label{rem:diagrams_units}
  Consider an adjunction \eqref{eq:dual-adjunction} induced by a
  dualising object $(\widetilde{X},\widetilde{A})$. For every
  $\psi:X\to\widetilde{X}$ and $\varphi:A\to\widetilde{A}$, the
  diagrams
  \begin{align*}
    \xymatrix{X\ar[r]^-{\eta_X}\ar[dr]_\psi & GFX\ar[d]^{\ev_\psi}\\ & \widetilde{X}}
                                                                     &&\text{and}&&
                                                                                    \xymatrix{A\ar[r]^-{\varepsilon_A}\ar[dr]_\varphi & FGA\ar[d]^{\ev_\varphi}\\ & \widetilde{A}}
  \end{align*}
  commute.
\end{remark}

We turn now to the question ``How to construct dual
equivalences?''. Motivated by the considerations above, we assume that
$\widetilde{X}$ and $\widetilde{A}$ are objects in $\catX$ and $\catA$
respectively, with the same underlying set
$|\widetilde{X}|=|\widetilde{A}|$. In order to obtain a dual
adjunction, we wish to lift the hom-functors
$\hom(-,\widetilde{X}):\catX^\op\to\SET$ and
$\hom(-,\widetilde{A}):\catA^\op\to\SET$ to functors
$F:\catX^\op\to\catA$ and $G:\catA^\op\to\catX$ in such a way that the
maps \eqref{eq:ev-maps} underlie an $\catX$-morphism respectively and
$\catA$-morphism. To this end, we consider the following two
conditions.

\begin{description}[font=\normalfont,labelindent=\parindent]
\item[\namedlabel{InitX}{(Init X)}] For each object $X$ in $\catX$,
  the cone
  $(\ev_{x}:\hom(X,\widetilde{X})\to
  |\widetilde{A}|,\psi\mapsto\psi(x))_{x\in |X|}$ admits an initial
  lift $(\ev_{x}:F(X)\to\widetilde{A})_{x\in |X|}$.
\item[\namedlabel{InitA}{(Init A)}] For each object $A$ in $\catA$,
  the cone
  $(\ev_{a}:\hom(A,\widetilde{A})\to |\widetilde{X}|))_{a\in |A|}$
  admits an initial lift $(\ev_{a}:G(A)\to\widetilde{X})_{b\in V(B)}$.
\end{description}

\begin{theorem}\label{thm:dual-adjunction}
  If conditions \ref{InitX} and \ref{InitA} are fulfilled, then these
  initial lifts define the object parts of a dual adjunction
  \eqref{eq:dual-adjunction} induced by
  $(\widetilde{X},\widetilde{A})$.
\end{theorem}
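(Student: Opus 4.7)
The plan is to use the initial lifts from \ref{InitX} and \ref{InitA} to build $F$ and $G$ on objects, to extend both functors to morphisms by precomposition, and then to prove that all the evaluation-style formulas describing the units, naturality, and the triangle identities hold in $\catX$ and $\catA$ by first verifying them in $\SET$ and lifting via initiality. Throughout, one technique is invoked repeatedly: a map $h\colon B\to FX$ underlies a morphism in $\catA$ if and only if each composite $\ev_x\cdot h\colon B\to\widetilde{A}$ is a morphism of $\catA$ (and symmetrically for $GA$); because $|-|$ is faithful, this reduces every structural check to a set-level identity.

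First I define $F$ on objects by letting $FX$ be the initial lift provided by \ref{InitX}, so $|FX|=\hom(X,\widetilde{X})$. Given $f\colon X\to X'$ in $\catX$, the map $-\cdot f\colon\hom(X',\widetilde{X})\to\hom(X,\widetilde{X})$ is a candidate for $|Ff|$, with $Ff\colon FX'\to FX$ in $\catA$ (equivalently $FX\to FX'$ in $\catA^\op$). That it underlies a morphism of $\catA$ is checked against the initial cone on $FX$ via the identity $\ev_x\cdot(-\cdot f)=\ev_{f(x)}$, whose right-hand side is a morphism by the initial lift defining $FX'$. Functoriality of $F$ then follows from functoriality at the set level together with faithfulness of $|-|$. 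The construction of $G$ is entirely dual, using \ref{InitA}.

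For the units, set $\eta_X(x)=\ev_x\in\hom(FX,\widetilde{A})=|GFX|$ and $\varepsilon_A(a)=\ev_a\in\hom(GA,\widetilde{X})=|FGA|$. That $\eta_X$ is a morphism in $\catX$ follows from initiality of $GFX$ (from \ref{InitA} applied to $FX$): for each $\psi\in|FX|$, the composite $\ev_\psi\cdot\eta_X$ sends $x$ to $\ev_\psi(\ev_x)=\psi(x)$, so $\ev_\psi\cdot\eta_X=\psi\colon X\to\widetilde{X}$, which is a morphism by assumption. The argument for $\varepsilon_A$ is symmetric. Naturality of $\eta$ reduces to the set-level identity $GFf(\ev_x)=\ev_x\cdot Ff=\ev_{f(x)}=\eta_{X'}(f(x))$, which lifts to $\catX$ by faithfulness; likewise for $\varepsilon$.

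Finally, for the triangle identities it suffices to trace an element of the underlying set. For $\psi\in|FX|$ and $x\in|X|$,
\[
  (F\eta_X\cdot\varepsilon_{FX})(\psi)(x)
  =(\ev_\psi\cdot\eta_X)(x)
  =\ev_\psi(\ev_x)
  =\psi(x),
\]
so $F\eta_X\cdot\varepsilon_{FX}$ acts as the identity on $|FX|$ and hence equals $1_{FX}$ in $\catA$ by faithfulness; the other triangle $G\varepsilon_A\cdot\eta_{GA}=1_{GA}$ is proved by the same computation. The main obstacle is bookkeeping rather than any single subtle step: one must keep straight the opposite variance of $F$ and the four flavours of evaluation map $\ev_x,\ev_\psi,\ev_a,\ev_\varphi$, after which each verification becomes a short $\SET$-level calculation followed by a single appeal to initiality.
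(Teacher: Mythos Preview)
Your proof is correct and follows the standard argument. Note, however, that the paper does not actually prove this theorem: it is stated without proof as a well-known result, with the surrounding text pointing to the references \citep{LR78,LR79}, \citep{DT89}, \citep{PT91}, \citep{Joh86} and \citep{CD98} for background on dual adjunctions induced by a dualising object. Your write-up is precisely the kind of verification one finds in those sources (most explicitly in \citep{PT91}): define $F$ and $G$ on objects via the initial lifts, extend to morphisms by precomposition, and use initiality of the defining cones together with faithfulness of the forgetful functors to lift the obvious $\SET$-level identities for the units, naturality, and the triangle identities. There is nothing to correct.
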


Clearly, if $|-|:\catX\to\SET$ is topological (see \citep{AHS90}),
then \ref{InitX} is fulfilled. The following proposition describes
another typical situation.

\begin{proposition}\label{prop:init}
  Let $\catA$ be the category of algebras for a signature $\Omega$ of
  operation symbols and assume that $\catX$ is complete and
  $|-|:\catX\to\SET$ preserves limits. Furthermore, assume that, for
  every operation symbol $\omega\in\Omega$, the corresponding
  operation ${|\widetilde{A}|}^I\to|\widetilde{A}|$ underlies an
  $\catX$-morphism ${\widetilde{X}}^I\to\widetilde{X}$. Then both
  \ref{InitX} and \ref{InitA} are fulfilled.
\end{proposition}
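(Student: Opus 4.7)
The plan is to construct $F(X)$ and $G(A)$ explicitly using the completeness of $\catX$ and the fact that subalgebras of products exist in $\catA$. In both cases the underlying set is prescribed by the statement, so the task reduces to endowing it with the correct structure and checking the universal property of an initial lift.

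For \ref{InitX}, I take $F(X)$ to be the subalgebra of $\widetilde{A}^{|X|}$ (equipped with the pointwise operations) carried by $\hom(X,\widetilde{X})\subseteq|\widetilde{A}|^{|X|}$. The content is to verify that $\hom(X,\widetilde{X})$ is indeed closed under every pointwise operation. Given $\omega\in\Omega$ of arity $I$ and $\catX$-morphisms $(\psi_i:X\to\widetilde{X})_{i\in I}$, completeness of $\catX$ yields the power $\widetilde{X}^I$ together with the tuple $\langle\psi_i\rangle:X\to\widetilde{X}^I$; composing with the $\catX$-morphism $\omega:\widetilde{X}^I\to\widetilde{X}$ supplied by hypothesis produces an $\catX$-morphism $X\to\widetilde{X}$. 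Since $|-|$ preserves the product $\widetilde{X}^I$, its underlying map sends $x$ to $\omega((\psi_i(x))_{i\in I})$, which is exactly the pointwise $\omega$-combination. The initial-lift property of the resulting cone $(\ev_x:F(X)\to\widetilde{A})_{x\in|X|}$ is then routine: any map $g:|B|\to\hom(X,\widetilde{X})$ for which each $\ev_x\circ g$ is a $\catA$-homomorphism induces, via the universal property of the product in $\catA$, a homomorphism $B\to\widetilde{A}^{|X|}$ whose image lies in the subalgebra $F(X)$.

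For \ref{InitA} the roles of source and target are reversed: one has to \emph{cut out} $\hom(A,\widetilde{A})$ inside $\widetilde{X}^{|A|}$ by equaliser conditions that encode respect of each $\omega\in\Omega$ of arity $I$. I construct, in $\catX$, the parallel pair
\[
  \widetilde{X}^{|A|}\rightrightarrows\widetilde{X}^{|A|^{I}},
\]
whose first arrow is precomposition with $\omega^{A}:|A|^{I}\to|A|$ (an $\catX$-morphism between powers, induced by the universal property of $\widetilde{X}^{|A|^I}$), and whose second arrow is obtained by tupling the $I$ precompositions with the projections $|A|^{I}\to|A|$ to land in $(\widetilde{X}^{|A|^{I}})^{I}\cong(\widetilde{X}^{I})^{|A|^{I}}$ and then post-composing with $\omega:\widetilde{X}^{I}\to\widetilde{X}$, which is an $\catX$-morphism by hypothesis. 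Define $G(A)$ to be the joint equaliser, in $\catX$, of all such parallel pairs as $\omega$ ranges over $\Omega$. Because $|-|$ preserves limits, $|G(A)|$ is the set-theoretic joint equaliser, which is precisely $\hom(A,\widetilde{A})$. Initiality now follows from the universal property of the power together with faithfulness of $|-|$: a map $g:|Y|\to\hom(A,\widetilde{A})$ whose components $\ev_a\circ g$ each underlie an $\catX$-morphism $Y\to\widetilde{X}$ induces an $\catX$-morphism $Y\to\widetilde{X}^{|A|}$; this morphism equalises every parallel pair set-theoretically, hence also in $\catX$ by faithfulness of $|-|$, and therefore factors uniquely through $G(A)$.

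The step that needs the most care is the construction of the ``post-composition with $\omega$'' arrow in \ref{InitA}, as this is where the hypothesis that $\omega:\widetilde{X}^{I}\to\widetilde{X}$ lives in $\catX$ is essential, together with the canonical isomorphism $(\widetilde{X}^{|A|^{I}})^{I}\cong(\widetilde{X}^{I})^{|A|^{I}}$ furnished by completeness of $\catX$. The remaining verifications are formal consequences of faithfulness of $|-|$ and the universal properties of products and equalisers.
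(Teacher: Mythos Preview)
Your proof is correct and follows essentially the same approach as the paper: for \ref{InitX} you equip $\hom(X,\widetilde{X})$ with the pointwise algebra structure, and for \ref{InitA} you exhibit $\hom(A,\widetilde{A})$ as an equaliser in $\catX$ of morphisms between powers of $\widetilde{X}$. The only organisational difference is that the paper indexes its equaliser conditions by pairs $(\omega,h)$ with $h\in|A|^I$, giving parallel maps $\widetilde{X}^{|A|}\rightrightarrows\widetilde{X}$, whereas you bundle all $h$ for a fixed $\omega$ into a single parallel pair $\widetilde{X}^{|A|}\rightrightarrows\widetilde{X}^{|A|^{I}}$; these are equivalent presentations of the same limit, and your more explicit verification of the initial-lift properties fills in detail that the paper leaves implicit.
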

\begin{proof}
  This result is essentially proven in \citep[Proposition
  2.4]{LR79}. Firstly, since all operations on $\widetilde{A}$ are
  $\catX$-morphisms, the algebra structure on $\hom(X,\widetilde{X})$
  can be defined pointwise. Secondly, for each algebra $A$, the
  canonical inclusion $\hom(A,\widetilde{A})\to |\widetilde{X}|^{|A|}$
  is the equaliser of a pair of $\catX$-morphisms between powers of
  $\widetilde{X}$. In fact, a map $f:|A|\to|\widetilde{A}|$ is an
  algebra homomorphism whenever, for every operation symbol
  $\omega\in\Omega$ with arity $I$ and every $h\in{|A|}^I$,
  \[
    f(\omega_A(h))=\omega_{\widetilde{A}}(f\cdot h).
  \]
  In other words, the set of maps $f:|A|\to|\widetilde{A}|$ which
  preserve the operation $\omega$ is precisely the equaliser of
  \[
    \pi_{\omega_A(h)}:{|\widetilde{A}|}^{|A|}\to |\widetilde{A}|
  \]
  and the composite
  \[
    {|\widetilde{A}|}^{|A|}\xrightarrow{\;-\cdot
      h\;}{|\widetilde{A}|}^I\xrightarrow{\;\omega_{\widetilde{A}}\;}|\widetilde{A}|.
  \]
  Since both maps underlie $\catX$-morphisms
  $\widetilde{X}^{|A|}\to\widetilde{X}$, the assertion follows.
\end{proof}

\begin{remark}\label{rem:init-lax}
  The result above remains valid if
  \begin{itemize}
  \item the objects of $\catA$ admit an order relation and some of the
    operations are only required to be preserved laxly, and
  \item the order relation $R\to|\widetilde{A}|\times|\widetilde{A}|$
    of $\widetilde{A}$ underlies an $X$-morphism
    $R'\to\widetilde{X}\times\widetilde{X}$.
  \end{itemize}
  In fact, with the notation of the proof above, the set of maps
  $f:|A|\to|\widetilde{A}|$ with
  \[
    f(\omega_A(h))\le\omega_{\widetilde{A}}(f\cdot h)
  \]
  for all $h\in{|A|}^I$ can be described as the pullback of the
  diagram
  \[
    \xymatrix{ & R\ar[d]\\ {|\widetilde{A}|}^{|A|}\ar[r] &
      |\widetilde{A}|\times|\widetilde{A}|.}
  \]
\end{remark}

Clearly, for every object $X$ in $\catX$, the unit $\eta_X:X\to GF(X)$
is an isomorphism if and only if $\eta_X$ is surjective and an
embedding. If the dual adjunction is constructed using \ref{InitX} and
\ref{InitA}, then, by Remark~\ref{rem:diagrams_units},
\begin{center}
  $\eta_X$ is an embedding if and only if the cone
  $(\psi:X\to\widetilde{X})_\psi$ is point-separating and initial.
\end{center}

We hasten to remark that the latter condition only depends on
$\widetilde{X}$ and is independent of the choice of $\catA$. If $\eta$
is not componentwise an embedding, we can substitute $\catX$ by its
full subcategory defined by all those objects $X$ where
$(\psi:X\to\widetilde{X})_\psi$ is point-separating and initial; by
construction, the functor $G$ corestricts to this subcategory. Again,
this procedure is only useful if this subcategory contains all
``interesting spaces'', otherwise it is probably best to use a
different dualising object. For exactly this reason, in this paper we
will consider the compact Hausdoff space $[0,1]$ instead of the
discrete two-element space.

We assume now that $\eta$ is componentwise an embedding. Then the
functor $F:\catX\to\catA^\op$ is faithful, and $\eta$ is an
isomorphism if and only if $F$ is also full. Put differently, if
$\eta$ is not an isomorphism, then $\catA$ has too many arrows. A
possible way to overcome this problem is to enrich the structure of
$\catA$. For instance, in \citep[VI.4.4]{Joh86} it is shown that,
under mild conditions, $\catA$ can be substituted by the category of
Eilenberg--Moore algebras for the monad on $\catA$ induced by the dual
of the adjunction \eqref{eq:dual-adjunction}. However, in this paper
we take a different approach: instead of saying ``$\catA$ has too many
morphisms'', one might also think that ``$\catX$ has too few
morphisms''. One way of adding morphisms to a category is passing from
$\catX$ to the Kleisli category $\catX_\mT$, for a suitable monad
$\mT$ on $X$. In fact, and rather trivially, for $\mT$ being the monad
on $\catX$ induced by the adjunction \eqref{eq:dual-adjunction}, the
comparison functor $\catX_\mT\to\catA^\op$ is fully faithful. In
general, this procedure will not give us new insights since we do not
know much about the monad induced by $F\dashv G$. The situation
improves if we take a different, better known monad $\mT$ on $\catX$
\emph{isomorphic} to the monad induced by $F\dashv G$. We are then
left with the task of identifying the $\catX$-morphisms inside
$\catX_\mT$ in a purely categorical way so that it can be translated
across a duality.

\begin{example}
  Consider the power monad $\mP$ on $\SET$ whose Kleisli category
  $\SET_\mP$ is equivalent to the category $\REL$ of sets and
  relations. Within $\REL$, functions can be identified by two
  fundamentally different properties.
  \begin{itemize}
  \item A relation $r:X\relto Y$ is a function if and only if $r$ has
    a right adjoint in the ordered category $\REL$. This is actually a
    2-categorical property; if we want to use it in a duality we must
    make sure that the involved equivalence functors are locally
    monotone.
  \item A relation $r:X\relto Y$ is a function if and only if the
    diagrams
    \begin{align*}
      \xymatrix{X\ar[r]|-{\object@{|}}^r\ar[rd]|-{\object@{|}}_\top & Y\ar[d]|-{\object@{|}}^\top \\ &1}
                                                                                                     &&\text{and}&&
                                                                                                                    \xymatrix{X\times X\ar[r]|-{\object@{|}}^{r\times r} & Y\times Y\\
      X\ar[u]|-{\object@{|}}^\Delta\ar[r]|-{\object@{|}}_r & Y\ar[u]|-{\object@{|}}_\Delta}
    \end{align*}
    commute. In the second diagram, $X\times X$ denotes the
    set-theoretical product which can be misleading since it is not
    the categorical product in $\REL$. To use this description in a
    duality result, one needs to know the corresponding operation on
    the other side.
  \end{itemize}
\end{example}

In the considerations above, the Kleisli category $\catX_\mT$ was only
introduced to support the study of $\catX$; however, at some occasions
our primary interest lies in $\catX_\mT$. In this case, a monad $\mT$
on $\catX$ is typically given before-hand, and we wish to find an
adjunction \eqref{eq:dual-adjunction} so that the induced monad is
isomorphic to $\mT$.  If a dualising object
$(\widetilde{X},\widetilde{A})$ induces this adjunction, we speak of a
\df{functional representation} of $\mT$.  Looking again at the example
$\CoAlg(V)\simeq\BAO^\op$ of Section~\ref{sec:intro}, by observing
that $V$ is part of a monad $\mV=\vmonad$ on $\STONE$, we can
think of the objects of $\CoAlg(V)$ as Boolean spaces $X$ equipped
with an endomorphism $r:X\relto X$ in $\STONE_{\mV}$; the morphisms
of $\CoAlg(V)$ are those morphisms of $\STONE$ commuting with this
additional structure. Halmos' duality theorem \citep{Hal56} affirms
that the category $\STONE_{\mV}$ is dually equivalent to the category
$\SLAT_\bool$ of Boolean algebras with $\vee$-semilattice
homomorphims, that is, maps preserving finite suprema but not
necessarily finite infima.  The duality $\CoAlg(V)\simeq\BAO^\op$
follows now from both Halmos duality and the classical Stone duality
$\STONE\simeq\BOOL^\op$ \citep{Sto36}. We note that \citep{Hal56} does
not talk about monads, but in \citep{HN15} we have studied this and
other dualities from this point of view.

As we explained above, our aim is to construct and analyse functors
$F:\catX_{\mT}\to\catA^\op$ which extend a given functor
$F:\catX\to\catA^\op$ that is part of an adjunction $F\dashv G$
induced by a dualising object $(\widetilde{X},\widetilde{A})$. It is
well-known that such functors $F:\catX_\mT\to\catA^\op$ correspond
precisely to monad morphisms from $\mT$ to the monad induced by
$F\dashv G$, and that monad morphisms into a ``double dualisation
monad'' are in bijection with certain algebra structures on
$\widetilde{X}$ (see \citep{Koc71b}, for instance). In the remainder
of this section, we explain these correspondences in the specific
context of our paper.

Let $\catX$ and $\catA$ be categories with respresentable forgetful
functors
\begin{align*}
  |-|\simeq\hom(X_0,-):\catX\to\SET &&\text{and}&& |-|\simeq\hom(A_0,-):\catA\to\SET,
\end{align*}
$\mT=\monad$ a monad on $\catX$ and $F\dashv G$ an adjunction
\[
  \catX\adjunctop{G}{F}\catA^\op
\]
induced by $(\widetilde{X},\widetilde{A})$. We denote by $\mD$ the
monad induced by $F\dashv G$. The next result establishes a connection
between monad morphisms $j:\mT\to\mD$ and $\mT$-algebra structures on
$\widetilde{X}$ \emph{compatible} with the adjunction $F\dashv G$.

\begin{theorem}\label{thm:Kleisli-adjunction}
  In the setting described above, the following data are in bijection.
  \begin{enumerate}
  \item\label{item:1} Monad morphisms $j:\mT\to\mD$.
  \item\label{item:2} Functors $F:\catX_{\mT}\to\catA^\op$ making the
    diagram
    \[
      \xymatrix{\catX_{\mT}\ar[r]^F & \catA^\op\\
        \catX\ar[u]^{F_{\mT}}\ar[ur]_F}
    \]
    commutative.
  \item\label{item:3} $\mT$-algebra structures
    $\sigma:T\widetilde{X}\to\widetilde{X}$ such that the map
    \[
      \hom(X,\widetilde{X})\longrightarrow\hom(TX,\widetilde{X}),\,\psi\longmapsto\sigma\cdot
      T\psi
    \]
    is an $\catA$-morphism $\kappa_X:FX\to FTX$, for every object $X$
    in $\catX$.
  \end{enumerate}
\end{theorem}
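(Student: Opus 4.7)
The plan is to establish the bijections (1) $\Leftrightarrow$ (2) and (1) $\Leftrightarrow$ (3) separately, each by providing an explicit construction in both directions.

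For (1) $\Leftrightarrow$ (2), I would invoke the universal property of the Kleisli construction. Given a monad morphism $j:\mT\to\mD$, I define $F:\catX_\mT\to\catA^\op$ to agree with the given functor on objects, and on a Kleisli morphism $f:X\to TY$ (representing $X\to Y$ in $\catX_\mT$) as the $F\dashv G$-transpose of the composite $j_Y\cdot f:X\to GFY$ in $\catX$. Functoriality of $F$ and the commutativity $F\circ F_\mT=F$ reduce to the monad morphism axioms $j\cdot e=\eta$ and $j\cdot m=\mu\cdot Dj\cdot jT$, where $\eta$ and $\mu$ denote the unit and multiplication of $\mD$. Conversely, any extension $F:\catX_\mT\to\catA^\op$ determines $j_X:TX\to GFX=DX$ as the $F\dashv G$-transpose of $F(1_{TX})$, where $1_{TX}:TX\to TX$ in $\catX$ is viewed as a Kleisli morphism $TX\to X$ in $\catX_\mT$.

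For (1) $\Leftrightarrow$ (3), the key device is the mate $\kappa_X:FX\to FTX$ in $\catA$ of $j_X:TX\to DX$ under $F\dashv G$. Starting from a monad morphism $j$, I set $\sigma:=\kappa_{\widetilde{X}}(1_{\widetilde{X}})$, interpreted as a morphism $T\widetilde{X}\to\widetilde{X}$ in $\catX$ via the identifications $|F\widetilde{X}|=\hom(\widetilde{X},\widetilde{X})$ and $|FT\widetilde{X}|=\hom(T\widetilde{X},\widetilde{X})$. Because $\kappa$ is natural in $X$ as a transformation of contravariant functors $\catX^\op\to\catA$, for every $\psi:X\to\widetilde{X}$ one obtains
\[
\kappa_X(\psi)=\kappa_X(1_{\widetilde{X}}\cdot\psi)=\kappa_{\widetilde{X}}(1_{\widetilde{X}})\cdot T\psi=\sigma\cdot T\psi,
\]
which is exactly the condition in (3). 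Conversely, given $\sigma$ as in (3), the prescription $\kappa_X(\psi):=\sigma\cdot T\psi$ defines $\catA$-morphisms by hypothesis and a natural family by direct inspection; transposing back yields a natural transformation $j_X:TX\to DX$.

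The main obstacle is matching the algebraic axioms on the two sides. The unit condition $j\cdot e=\eta$ transposes to $\sigma\cdot e_{\widetilde{X}}=1_{\widetilde{X}}$ essentially via a Yoneda-style argument. The multiplication condition is more delicate: $\mu_X=G\varepsilon_{FX}$ where $\varepsilon$ is the counit of $F\dashv G$, so unravelling $j\cdot m=\mu\cdot Dj\cdot jT$ through the transpose requires careful bookkeeping. The payoff is the equation $\sigma\cdot T\sigma=\sigma\cdot m_{\widetilde{X}}$, and this step makes essential use of the fact that $\widetilde{X}$ and $\widetilde{A}$ form a dualising object in the specific sense above---in particular, that the underlying set identifications $|F(-)|=\hom(-,\widetilde{X})$ and $|G(-)|=\hom(-,\widetilde{A})$ interact coherently with the adjunction transpose.
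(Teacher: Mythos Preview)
Your proposal is correct and, for the bijection (1)$\Leftrightarrow$(2), essentially identical to the paper's treatment (both invoke the universal property of the Kleisli construction, with the paper citing \citep{Pum70}).

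For (1)$\Leftrightarrow$(3) your argument is also correct, but the organisation differs from the paper's in a way worth noting. You work directly with the mate $\kappa_X$ of $j_X$, extract $\sigma=\kappa_{\widetilde{X}}(1_{\widetilde{X}})$, and then plan to verify the $\mT$-algebra axioms for $\sigma$ by transposing the monad-morphism axioms for $j$; you rightly flag the multiplication axiom as ``delicate bookkeeping''. The paper avoids this computation entirely by observing that $\widetilde{X}\simeq GA_0$ carries a canonical $\mD$-algebra structure $\ev_{1_{\widetilde{X}}}:GF\widetilde{X}\to\widetilde{X}$ coming from the comparison functor $\catA^\op\to\catX^\mD$; since any monad morphism pulls back algebra structures, $\sigma=\ev_{1_{\widetilde{X}}}\cdot j_{\widetilde{X}}$ is automatically a $\mT$-algebra structure, no verification needed. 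In the reverse direction the paper also bypasses (1): from $\sigma$ it builds the Kleisli extension $F:\catX_\mT\to\catA^\op$ directly via $\varphi\mapsto F\varphi\cdot\kappa_Y$, so that functoriality encodes the monad-morphism axioms rather than the other way around. Your route is more elementary and self-contained; the paper's is more conceptual and sidesteps the axiom checks you anticipate.
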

\begin{proof}
  The equivalence between the data described in \eqref{item:1} and
  \eqref{item:2} is well-known, see \citep{Pum70}, for instance. We
  recall here that, for a monad morphism $j:\mT\to\mD$, the
  corresponding functor $F:\catX_{\mT}\to\catA^\op$ can be obtained as
  \[
    \catX_{\mT}\xrightarrow{\text{composition with
        $j$}}\catX_\mD\xrightarrow{\text{comparison}}\catA^\op.
  \]
  To describe the passage from (\ref{item:1}) to (\ref{item:3}), we
  recall from \citep[Lemma VI.4.4]{Joh86} that $\widetilde{X}$ becomes
  a $\mD$-algebra since $\widetilde{X}\simeq GA_0$ and
  $G:\catA^\op\to\catX$ factors as
  \[
    \xymatrix{\catA^\op\ar[drr]_{G}\ar[rr]^{\text{comparision}} &&
      \catX^\mD\ar[d]^{\text{forgetful}} \\ && \catX.}
  \]
  A little computation shows that the $\mD$-algebra structure on
  $\widetilde{X}$ is
  
  \[
    GF\widetilde{X}\xrightarrow{\ev_{1_{\widetilde{X}}}}\widetilde{X}.
  \]
  Composing $\ev_{1_{\widetilde{X}}}$ with $j_{\widetilde{X}}$ gives a
  $\mT$-algebra structure
  $\sigma:T\widetilde{X}\to\widetilde{X}$. Furthermore, the functor
  $F:\catX_{\mT}\to\catA^\op$ sends $1_{TX}:TX\relto X$ in $\catX_\mT$
  to the $\catA$-morphism $Fj_X\cdot\varepsilon_{FX}:FX\to FTX$ which sends
  $\psi\in FX$ to $Fj_X(\ev_\psi)=\ev_\psi\cdot j_X$. On the other
  hand,
  \[
    \sigma\cdot T\psi=\ev_{1_{\widetilde{X}}}\cdot
    j_{\widetilde{X}}\cdot T\psi=\ev_{1_{\widetilde{X}}}\cdot
    GF\psi\cdot j_X=\ev_\psi\cdot j_X;
  \]
  which shows that $\kappa_X=Fj_X\cdot\varepsilon_{FX}$ is an
  $\catA$-morphism. For a compatible $\mT$-algebra structures
  $\sigma:T\widetilde{X}\to\widetilde{X}$ as in (\ref{item:3}),
  \[
    (\varphi:X\to TY)\longmapsto (FY\xrightarrow{\kappa_Y}
    FTY\xrightarrow{F\varphi}FX)
  \]
  defines a functor $F:\catX_{\mT}\to\catA^\op$ making the diagram
  \[
    \xymatrix{\catX_{\mT}\ar[r]^F & \catA^\op\\
      \catX\ar[u]^{F_{\mT}}\ar[ur]_F}
  \]
  commutative. The induced monad morphism $j:\mT\to\mD$ is given by
  the family of maps
  \[
    j_X:|TX|\longrightarrow\hom(FX,\widetilde{A}),\,\fx\longmapsto(\psi\mapsto\sigma\cdot
    T\psi(\fx)).
  \]
  Furthermore, the $\mT$-algebra structure induced by this $j$ is
  indeed
  \[
    \ev_{1_{\widetilde{X}}}\cdot j_{\widetilde{X}}=\sigma\cdot
    T1_{\widetilde{X}}=\sigma.
  \]
  Finally, for a monad morphism $j:\mT\to\mD$, the monad morphism
  induced by the corresponding algebra structure $\sigma$ has as
  $X$-component the map sending $\fx\in TX$ to
  \[
    \sigma\cdot T\psi(\fx)=\ev_\psi\cdot
    j_X(\fx)=j_X(\fx)(\psi).\qedhere
  \]
\end{proof}

\begin{remark}\label{rem:Kleisli-adjunction}
  The constructions described above seem to be more natural if
  $\widetilde{X}=TX_0$ with $\mT$-algebra structure $m_{X_0}$, see
  \citep[Proposition 4.3]{HN15}. In this case, the functor
  $F:\catX_{\mT}\to\catA^\op$ is a lifting of the hom-functor
  $\hom(-,X_0):\catX_{\mT}\to\SET^\op$. Furthermore, interpreting the
  elements of $TX$ as morphisms $\varphi:X_0\relto X$ in the Kleisli
  category $\catX_\mT$ allows to describe the components of the monad
  morphism $j$ using composition in $\catX_\mT$:
  \[
    j_X:|TX|\longrightarrow\hom(FX,\widetilde{A}),\,\varphi\longmapsto(\psi\mapsto\psi\cdot\varphi).
  \]
  In Section~\ref{sec:enriched-Vietoris} we apply this construction
  to a variation of the Vietoris monad on a category of ``metric
  compact Hausdoff spaces''. Unlike the classical Vietoris functor,
  the functor of this monad sends the one-element space to
  $[0,1]^\op$.
\end{remark}

\section{Duality theory for continuous
  distributors}\label{sec:duality-distributors}

In this section we apply the results presented in
Section~\ref{sec:dual-adjunctions} to the Vietoris monad $\mV$ on
$\catX=\POSCH$, with $\widetilde{X}=[0,1]^\op$ and $\mV$-algebra
structure
\[
  V([0,1]^\op)\longrightarrow [0,1]^\op,\,A\longmapsto\sup_{x\in A} x.
\]
Then, for a category $\catA$ and an adjunction
\begin{equation*}
  \POSCH\adjunctop{G}{C}\catA^\op
\end{equation*}
induced by $([0,1]^\op,[0,1])$ and compatible with the $\mV$-algebra
structure on $[0,1]^\op$, the corresponding monad morphism $j$ has as
components the maps
\[
  j_X:VX\longrightarrow
  GC(X),\,A\longmapsto(\Phi_A:CX\to[0,1],\,\psi\mapsto\sup_{x\in
    A}\psi(x)).
\]
We wish to find an appropriate category $\catA$ so that $j$ is an
isomorphism. Our first inspiration stems from \citep{Sha92} where the
following result is proven.

\begin{theorem}
  Consider the subfunctor $V_1:\COMPHAUS\to\COMPHAUS$ of $V$
  sending $X$ to the space of all non-empty closed subsets of $X$. The
  functor $V_1:\COMPHAUS\to\COMPHAUS$ is naturally isomorphic to the
  functor which sends $X$ to the space of all functions
  \[
    \Phi:C(X,\R_0^+)\to\R_0^+
  \]
  satisfying the conditions (for all
  $\psi,\psi_1,\psi_2\in C(X,\R_0^+)$ and $u\in\R_0^+$)
  \begin{enumerate}
  \item $\Phi$ is monotone,
  \item\label{cond.act} $\Phi(u*\psi)=u*\Phi(\psi)$,
  \item\label{cond.add}
    $\Phi(\psi_1 + \psi_2)\le\Phi(\psi_1)+\Phi(\psi_2)$,
  \item $\Phi(\psi_1 \cdot \psi_2)\le\Phi(\psi_1)\cdot\Phi(\psi_2)$,
  \item\label{cond.addconst} $\Phi(\psi_1 + u)=\Phi(\psi_1)+u$,
  \item\label{cond.const} $\Phi(u)=u$.
  \end{enumerate}
  The topology on the set of all maps $\Phi:C(X,\R_0^+)\to\R_0^+$
  satisfying the conditions above is the initial one with respect to
  all evaluation maps $\ev_\psi$, where $\psi\in C(X,\R_0^+)$. The
  $X$-component of the natural isomorphism sends a closed non-empty
  subset $A\subseteq X$ to the map $\Phi_A:C(X,\R_0^+)\to\R_0^+$
  defined by
  \[
    \Phi_A(\psi)=\sup_{x\in A}\psi(x).
  \]
\end{theorem}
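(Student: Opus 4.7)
The plan is to exhibit the $X$-component of the claimed natural isomorphism as the map $\iota_X : V_1 X \to \Psi X$ defined by $A \mapsto \Phi_A$, where $\Psi X$ denotes the space of all functionals $\Phi : C(X,\R_0^+) \to \R_0^+$ satisfying conditions (1)--(6). First I would verify the easy direction: for any non-empty closed $A \subseteq X$, $\Phi_A(\psi) = \sup_{x\in A}\psi(x)$ satisfies (1)--(6), each being an immediate consequence of basic properties of suprema (and $A \neq \varnothing$ is used for (6), since $\sup_A u = u$ only when $A$ is non-empty).

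For injectivity of $\iota_X$, suppose $A \neq B$ and pick $x_0 \in A \setminus B$; since $X$ is compact Hausdorff and $B$ is closed, Urysohn's lemma produces $\psi \in C(X,\R_0^+)$ with $\psi(x_0)=1$ and $\psi|_B = 0$, whence $\Phi_A(\psi) \geq 1 > 0 = \Phi_B(\psi)$. For surjectivity, given $\Phi$ satisfying (1)--(6), define
\[
  A_\Phi \;=\; \bigcap_{\psi \in C(X,\R_0^+)} \{x \in X : \psi(x) \leq \Phi(\psi)\},
\]
which is closed as an intersection of closed sets. One then has to show (a) $A_\Phi \neq \varnothing$, and (b) $\Phi = \Phi_{A_\Phi}$. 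For (a) I would argue by contradiction using compactness: if the intersection is empty, finitely many sets $\{\psi_i \leq \Phi(\psi_i)\}$ already have empty intersection, and after normalising via (2) and (5) one combines the $\psi_i$ (using subadditivity (3) together with (5), (6)) into a single function whose value exceeds $\Phi$ of itself, a contradiction with monotonicity. For (b), the inequality $\Phi_{A_\Phi}(\psi) \leq \Phi(\psi)$ is immediate; for the reverse, assuming $\Phi(\psi) > \sup_{A_\Phi}\psi + \varepsilon$, one truncates $\psi$ above $\sup_{A_\Phi}\psi + \varepsilon$ and uses (5), (6) to reduce to the same empty-intersection situation as in (a).

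For the topological statement, recall that on the classical Vietoris space $V_1 X$ the map $A \mapsto \max_{x\in A}\psi(x)$ is continuous for every $\psi \in C(X,\R_0^+)$, and the resulting cone is initial (a standard fact for compact Hausdorff spaces). By definition $\Psi X$ carries the initial topology with respect to all $\ev_\psi$. Since $\ev_\psi \circ \iota_X$ is exactly $A \mapsto \max_{x \in A}\psi(x)$, the bijection $\iota_X$ is a homeomorphism. Naturality in $f : X \to Y$ reduces to the identity
\[
  \Phi_{f[A]}(\psi) \;=\; \sup_{y \in f[A]} \psi(y) \;=\; \sup_{x \in A} \psi(f(x)) \;=\; \Phi_A(\psi \circ f),
\]
which matches the action of the induced morphism on the right-hand side.

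The main obstacle is the argument that $A_\Phi$ is non-empty and that $\Phi$ really equals $\Phi_{A_\Phi}$. Sublinearity (from (2) and (3)) alone is too weak---it admits many functionals that are not of sup-type---so one must exploit the multiplicative condition (4) together with the constant-compatibility conditions (5) and (6) to rule out such functionals. The technical core is therefore to show that these six conditions, together, force $\Phi$ to commute with the operation of forming pointwise maxima against constants, which is what allows the truncation arguments in (a) and (b) to close.
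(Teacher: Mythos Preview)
The paper does not itself prove this theorem; it is quoted from Shapiro (1992) as motivation, without proof. What the paper does contain is an adaptation of Shapiro's argument to its own $[0,1]$-valued setting (the proposition establishing $\Phi(\psi)\le\Phi_{\Zero(\Phi)}(\psi)$ and the one after it showing $\Zero(\Phi)=\Anti(\Phi)$), and your outline can be compared against those.

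The main structural difference: you work with the single candidate set $A_\Phi=\bigcap_\psi\{x:\psi(x)\le\Phi(\psi)\}$, which the paper calls $\Anti(\Phi)$. Following Shapiro, the paper introduces a second, a~priori larger set $\Zero(\Phi)=\bigcap_{\Phi(\psi)=0}\Zero(\psi)$ and sandwiches: $\Phi_{\Anti(\Phi)}\le\Phi$ is immediate from the adjunction, $\Phi\le\Phi_{\Zero(\Phi)}$ is a compactness-and-covering argument using only monotonicity, homogeneity and finite suprema, and the multiplicative hypothesis~(4) is then used \emph{separately} to prove $\Zero(\Phi)=\Anti(\Phi)$. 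This decomposition isolates exactly where each axiom enters; the paper even observes that condition~(5) is redundant.

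Your sketch tries to obtain non-emptiness of $A_\Phi$ and the reverse inequality directly, by ``normalising and combining'' finitely many $\psi_i$ via (2), (3), (5), (6). As stated this step does not close: from a finite cover by sets $\{\psi_i>\Phi(\psi_i)\}$, subadditivity only yields $\Phi(\sum\psi_i)\le\sum\Phi(\psi_i)$, which points the wrong way for a contradiction, and you have not explained how (5) or (6) repair this. The two-set device is precisely what circumvents the problem: one first upgrades (using the multiplicative condition, in the paper's formulation condition~(A)) each witness to some $\bar\psi_i$ with $\Phi(\bar\psi_i)=0$ and $\bar\psi_i(x)=1$, after which the covering argument goes through with $\alpha\otimes\psi\le u\vee\bigvee_i(\psi_i\otimes\psi)$. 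Your diagnosis of where the difficulty lies is accurate, but the $\Zero/\Anti$ split is exactly the trick that resolves it.
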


One notices immediately that if we allow $A=\varnothing$, $\Phi_A$
does not satisfy the last two axioms above.  In fact, as we show
below, the condition \eqref{cond.addconst} is not necessary for
\citeauthor{Sha92}'s result; moreover, when generalising from
multiplication $*$ to an arbitrary continuous quantale structure
$\otimes$ on $[0,1]$, we change $+$ in \eqref{cond.addconst} to
truncated minus, which is compatible with the empty set. Thanks to
\eqref{cond.act}, the condition~\ref{cond.const} can be equivalently
expressed as $\Phi(1)=1$, and this is purely related to
$A\neq\varnothing$ (see Proposition~\ref{prop:functional-relation}).

\begin{remark}
  There is also interesting work of \citeauthor{Rad97} on a
  ``functional representation of the Vietoris monad'' in terms of
  functionals, notably \citep{Rad97,Rad09}. In particular,
  \citeauthor{Rad97} shows that the Vietoris monad is isomorphic to
  the monad defined by all real-valued ``functionals which are normed,
  weakly additive, preserve $\max$ and weakly preserve $\min$''.
\end{remark}

To fit better into our framework, in the sequel we will consider
functions into $[0,1]$ instead of $\R_0^+$, and also consider binary
suprema $\vee$ instead of $+$ in \eqref{cond.add}.

\begin{assumption}\label{ass:general-assumption}
  From now on $\otimes$ is a quantale structure on $[0,1]$ with
  neutral element $1$. Note that then necessarily
  $u\otimes v\le u\wedge v$, for all $u,v\in[0,1]$. In order to be
  able to combine continuous functions $\psi_1,\psi_2:X\to[0,1]$, we
  assume that $\otimes:[0,1]\times[0,1]\to[0,1]$ is continuous with
  respect to the Euclidean topology on $[0,1]$. In other words, we
  consider a continuous t-norm on $[0,1]$.
\end{assumption}

We recall that $\FinSup{[0,1]}$ denotes the category of separated
finitely cocomplete $[0,1]$-categories and finite colimit preserving
$[0,1]$-functors; the unit interval $[0,1]$ equipped with
$\hom:[0,1]\times[0,1]\to[0,1]$ is certainly an object of
$\FinSup{[0,1]}$. We introduce now the following categories.

\begin{itemize}
\item The category
  \[
    \Mnd(\FinSup{[0,1]})
  \]
  of commutative monoids and monoid homomorphisms in
  $\FinCoCts{[0,1]}$; that is, the objects of $\Mnd(\FinCoCts{[0,1]})$
  are separated finitely cocomplete $[0,1]$-categories $X$ equipped
  with an associative and commutative operation $\ocircle:X\times X\to X$
  with unit element $1$ the top-element of $X$ and the morphisms of
  $\Mnd(\FinCoCts{[0,1]})$ are finitely cocontinuous $[0,1]$-functors
  preserving the unit and the multiplication. We also assume that
  $x\ocircle -:X\to X$ is a finitely cocontinuous $[0,1]$-functor, for every
  $x\in X$.
\item The category
  \[
    \LaxMnd(\FinSup{[0,1]})
  \]
  of commutative monoids and lax monoid homomorphisms in
  $\FinSup{[0,1]}$; that is, $\LaxMnd(\FinSup{[0,1]}$ has the same
  objects as $\LaxMnd(\FinSup{[0,1]})$ and the morphisms are finitely
  cocontinuous $[0,1]$-functors $f:X\to Y$ satisfying
  \[
    f(x\ocircle x')\le f(x)\ocircle f(x'),
  \]
  for all $x,x'\in X$.
\end{itemize}

Note that, for every $u\in[0,1]$ and $x\in X$, we have
\[
  x\ocircle(1\otimes u)=(x\ocircle 1)\otimes u=x\otimes u,
\]
therefore we can think of $\ocircle:X\times X\to X$ as an ``extension'' of
$\otimes:X\times[0,1]\to X$ and write $x\otimes x'$ instead of
$x\ocircle x'$. Thinking more in algebraic terms, $\Mnd(\FinSup{[0,1]})$ is a
$\aleph_1$-ary quasivariety; in fact, by adding to the algebraic
theory of $\FinSup{[0,1]}$ (see
Remark~\ref{rem:FinCoCts_quasivariety}) the operations and equations
describing the monoid structure, one obtains a presentation by
operations and implications. It follows in particular that
$\Mnd(\FinSup{[0,1]})$ is complete and cocomplete. The forgetful
functor
\[
  \Mnd(\FinSup{[0,1]})\to\FinSup{[0,1]}
\]
preserves limits, and it is easy to see that the faithful functor
\[
  \Mnd(\FinSup{[0,1]})\to\LaxMnd(\FinSup{[0,1]})
\]
preserves limits as well.

In the sequel we consider the $[0,1]$-category $[0,1]$ as an object of
$\Mnd(\FinSup{[0,1]})$ with multiplication
$\otimes:[0,1]\times[0,1]\to[0,1]$. Note that
$\otimes:[0,1]^\op\times[0,1]^\op\to[0,1]^\op$ is a morphism in
$\POSCH$. From Theorem~\ref{thm:dual-adjunction} and
Remark~\ref{rem:init-lax}, we obtain:

\begin{proposition}\label{prop:induced-dual-adjunction}
  The dualising object $([0,1]^\op,[0,1])$ induces a natural dual
  adjunction
  \[
    \POSCH\adjunctop{G}{C}\LaxMnd(\FinSup{[0,1]})^\op.
  \]
  Here $CX$ is given by $\POSCH(X,[0,1]^\op)$ with all operations
  defined pointwise, and $GA$ is the space
  $\LaxMnd(\FinSup{[0,1]})(A,[0,1])$ equipped with the initial
  topology with respect to all evaluation maps
  \[
    \ev_a:\LaxMnd(\FinSup{[0,1]})(A,[0,1])\longrightarrow[0,1]^\op,\,\Phi\longmapsto\Phi(a).
  \]
\end{proposition}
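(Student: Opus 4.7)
The plan is to apply Theorem~\ref{thm:dual-adjunction}, extended to the lax setting by Remark~\ref{rem:init-lax}, with $\catX = \POSCH$, $\catA = \LaxMnd(\FinSup{[0,1]})$, $\widetilde{X} = [0,1]^\op$ and $\widetilde{A} = [0,1]$; their underlying sets agree. Once the initial-lift conditions (Init X) and (Init A) are verified, the theorem automatically produces the dual adjunction with units given by the evaluation maps, matching the stated descriptions of $CX$ and $GA$.

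For (Init X), I will equip $CX = \POSCH(X,[0,1]^\op)$ with the pointwise lax-monoid-in-$\FinSup{[0,1]}$ structure. This is well defined because every operation appearing in the algebraic description of $\LaxMnd(\FinSup{[0,1]})$ --- binary supremum, the bottom element $0$, copowers $-\otimes u$ for $u\in[0,1]$, and the monoid multiplication $\otimes$ --- underlies a $\POSCH$-morphism on (a power of) $[0,1]^\op$. For $\otimes$ this is recorded in the paragraph immediately preceding the statement; the remaining cases are routine continuity-and-antitonicity checks. All defining (in)equalities transfer pointwise from $[0,1]$ to $CX$, and each evaluation cone $(\ev_x)_{x\in X}$ is initial in $\LaxMnd(\FinSup{[0,1]})$ by construction: a map into $CX$ is an $\catA$-morphism if and only if each of its pointwise evaluations is.

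For (Init A), I invoke Proposition~\ref{prop:init} together with Remark~\ref{rem:init-lax}. The category $\POSCH$ is complete and the forgetful functor $\POSCH\to\SET$ preserves limits; by the previous paragraph, all relevant operations on $[0,1]$ underlie $\POSCH$-morphisms on $[0,1]^\op$; and the subset $\{(a,b)\in [0,1]^2\mid a\le b\}$ is closed in $[0,1]^2$, hence inherits a partially ordered compact structure making its inclusion into $[0,1]^\op\times[0,1]^\op$ a $\POSCH$-morphism. This handles the lax preservation of $\otimes$ through Remark~\ref{rem:init-lax}. Consequently, $GA$ is carved out of $[0,1]^{\op|A|}$ as an intersection of equalisers and pullbacks in $\POSCH$ and inherits the desired initial structure with respect to the evaluation cone.

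The point requiring the most care is the infinitary quasi-equational condition from Remark~\ref{rem:FinCoCts_quasivariety}: whenever $v=\bigvee S$, one demands $x\otimes v = \bigvee_{u\in S}(x\otimes u)$. Since Proposition~\ref{prop:init} is phrased for finitary signatures, I would re-express this condition as the family of pointwise inequalities $x\otimes u\le x\otimes v$ for $u\in S$ --- each cutting out a closed $\POSCH$-subobject of the ambient power of $[0,1]^\op$ --- together with an implication whose (possibly infinitary, but countable when $\V=[0,1]$) hypothesis is once more a conjunction of such comparisons. Completeness of $\POSCH$ and closedness of the order relation on $[0,1]^\op$ ensure that the resulting intersection remains a $\POSCH$-subobject. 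With this in hand, both (Init X) and (Init A) hold, and Theorem~\ref{thm:dual-adjunction} delivers the stated adjunction.
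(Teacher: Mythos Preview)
Your approach is correct and essentially the same as the paper's: verify that the operations $\vee$, $-\otimes u$, and the monoid multiplication $\otimes$ on $[0,1]$ underlie $\POSCH$-morphisms on (powers of) $[0,1]^\op$ and that the order relation is closed, then invoke Proposition~\ref{prop:init} together with Remark~\ref{rem:init-lax} to conclude via Theorem~\ref{thm:dual-adjunction}.

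One minor point: your last paragraph worries unnecessarily. The infinitary quasi-equations of Remark~\ref{rem:FinCoCts_quasivariety} are axioms defining the \emph{objects} of $\FinSup{[0,1]}$; they are not conditions that morphisms must preserve. Since $GA$ is carved out of $[0,1]^{|A|}$ solely by the requirement that maps preserve the operations $\bot$, $\vee$, $-\otimes u$ (and laxly preserve the monoid multiplication), only those preservation conditions need to be encoded as equalisers and pullbacks in $\POSCH$---and there Proposition~\ref{prop:init} and Remark~\ref{rem:init-lax} already suffice, regardless of arity. The infinitary implications play no role in (Init~A); they matter only for (Init~X), where they pass to $CX$ automatically because pointwise structures inherit all (quasi-)equations satisfied by $[0,1]$.
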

\begin{proof}
  In terms of the algebraic presentation of the $[0,1]$-category
  $[0,1]$ of Remark~\ref{rem:FinCoCts_quasivariety}, the operations
  $\vee$ and $-\otimes u$ are morphisms
  $\vee:[0,1]^\op\times[0,1]^\op\to[0,1]^\op$ and
  $-\otimes u:[0,1]^\op\to[0,1]^\op$ in $\POSCH$, and the order
  relation of $[0,1]^\op$ is closed in $[0,1]^\op\times
  [0,1]^\op$. Therefore the assertion follows from
  Theorem~\ref{thm:dual-adjunction} and Proposition~\ref{prop:init}.
\end{proof}

The partially ordered compact space $[0,1]^\op$ is a $\mV$-algebra
with algebra structure $\sup:V([0,1]^\op)\to[0,1]^\op$, and one easily
verifies that
\[
  \hom(X,[0,1]^\op)\longrightarrow\hom(VX,[0,1]^\op),\,\psi\longmapsto(A\mapsto\sup_{x\in
    A}\psi(x))
\]
is a morphism $CX\to CVX$ in $\LaxMnd(\FinSup{[0,1]})$. By
Theorem~\ref{thm:Kleisli-adjunction} and
Remark~\ref{rem:Kleisli-adjunction}, we obtain a commutative diagram
\[
  \xymatrix{\POSCHDIST\ar[rr]^{C} && \LaxMnd(\FinSup{[0,1]})^\op,\\
    & \POSCH\ar[ul]\ar[ur]_C}
\]
of functors; where, for $\varphi:X\modto Y$ in $\POSCHDIST$,
\begin{align*}
  C\varphi:CY &\longrightarrow CX\\
  \psi &\longmapsto \left(x\mapsto\sup_{x\,\varphi\, y}\psi(y)\right).
\end{align*}
The induced monad morphism $j$ is precisely given by the family of
maps
\[
  j_X:VX\longrightarrow\LaxMnd(\FinSup{[0,1]})(CX,[0,1]),\;A\longmapsto\Phi_A,
\]
with
\[
  \Phi_A:CX\longrightarrow[0,1],\;\psi\longmapsto\sup_{x\in A}\psi(x).
\]

In order to show that $j$ is an isomorphism, it will be convenient to
refer individually to the components of the structure of $CX$; that
is, we consider the following conditions on a map $\Phi:CX\to[0,1]$.

\begin{description}[font=\normalfont,labelindent=\parindent]
\item [\namedlabel{Mon}{(Mon)}] $\Phi$ is monotone.
\item [\namedlabel{Act}{(Act)}] For all $u\in[0,1]$ and $\psi\in CX$,
  $\Phi(u\otimes\psi)=u\otimes\Phi(\psi)$.
\item [\namedlabel{Sup}{(Sup)}] For all $\psi_1,\psi_2\in CX$,
  $\Phi(\psi_1\vee\psi_2)=\Phi(\psi_1)\vee\Phi(\psi_2)$.
\item [\namedlabel{LTen}{(Ten)$_\textrm{lax}$}] For all
  $\psi_1,\psi_2\in CX$,
  $\Phi(\psi_1\otimes \psi_2)\le\Phi(\psi_1)\otimes\Phi(\psi_2)$.
\item [\namedlabel{Ten}{(Ten)}] For all $\psi_1,\psi_2\in CX$,
  $\Phi(\psi_1\otimes \psi_2)=\Phi(\psi_1)\otimes\Phi(\psi_2)$.
\item [\namedlabel{Top}{(Top)}] $\Phi(1)=1$.
\end{description}

\begin{remarks}
  The condition \Act{} implies $\Phi(0)=0$ and \Sup{} and implies
  \Mon. Also note that, by \Mon{} and \Act, if $\psi(x)\le u$ for all
  $x\in X$, then $\Phi(\psi)\le u$. Finally, if $\otimes=\wedge$, then
  \LTen{} is a consequence of \Mon.
\end{remarks}

Clearly, $\Phi:CX\to[0,1]$ is a morphism in $\LaxMnd(\FinSup{[0,1]})$
if and only if $\Phi$ satisfies the conditions \Mon, \Act, \Sup{} and
\LTen; and $\Phi:CX\to[0,1]$ is a morphism in $\Mnd(\FinSup{[0,1]})$
if and only if $\Phi$ satisfies the conditions\Mon, \Act, \Sup, \Top{}
and \Ten.

\begin{definition}\label{d:def:1}
  A closed upper subset $A\subseteq X$ of a partially ordered compact
  space is called \df{irreducible} whenever, for all closed upper
  subsets $A_1,A_2\subseteq X$ with $A_1\cup A_2=A$, one has $A=A_1$
  or $A=A_2$.
\end{definition}
With this definition, the empty set $\varnothing$ is irreducible; by
soberness of the corresponding stably compact space, the non-empty
irreducible closed subset $A\subseteq X$ are precisely the subsets of
the form $A=\upc x$, for some $x\in X$.

\begin{proposition}\label{prop:functional-relation}
  Let $X$ be a partially ordered compact space and $A\subseteq X$ a
  closed upper subset of $X$. Then the following assertions hold.
  \begin{enumerate}
  \item\label{item:5} $A\neq\varnothing$ if and only if $\Phi_A$ satisfies \Top.
  \item\label{item:6} $A$ is irreducible if and only if $\Phi_A$ satisfies \Ten.
  \end{enumerate}
\end{proposition}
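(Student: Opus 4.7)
The plan is to dispatch part (1) by direct evaluation, and to prove part (2) by separating the non-empty case from the empty case, where the non-empty direction exploits the sober-space description of irreducible closed upper sets and the other direction uses Urysohn-type separation (Proposition~\ref{prop:orderUri}).

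For (1), compute $\Phi_A(1)=\sup_{x\in A}1$: this supremum equals $1$ when $A\neq\varnothing$ and equals $0$ (the bottom of $[0,1]$) when $A=\varnothing$. So $\Phi_A$ satisfies \Top{} exactly when $A$ is non-empty. The empty case also needs to be cleared for the $(\Rightarrow)$ direction of part (2): since $u\otimes -$ preserves suprema we have $0\otimes 0=0$, hence $\Phi_\varnothing(\psi_1\otimes\psi_2)=0=0\otimes 0=\Phi_\varnothing(\psi_1)\otimes\Phi_\varnothing(\psi_2)$.

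For the $(\Rightarrow)$ direction of (2) with $A\neq\varnothing$: since the associated stably compact space is sober, the non-empty irreducible closed upper subsets of $X$ are exactly the sets $\upc x$ for $x\in X$. A morphism $\psi\in CX=\POSCH(X,[0,1]^\op)$ is monotone from $X$ to $[0,1]^\op$, equivalently order-reversing from $X$ into $[0,1]$ with its standard order; so for $y\geq x$ we have $\psi(y)\leq\psi(x)$, giving $\Phi_{\upc x}(\psi)=\psi(x)$. Consequently $\Phi_{\upc x}$ is just evaluation at $x$, and \Ten{} is then inherited from the pointwise definition of the multiplication on $CX$.

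For the converse $(\Leftarrow)$ of (2), I would argue by contrapositive. Suppose $A=A_1\cup A_2$ with $A_i$ closed upper and $A_1,A_2\subsetneq A$; pick $x_1\in A\setminus A_2$ and $x_2\in A\setminus A_1$. Since $A_2$ is upper and $x_1\notin A_2$, the closed lower set $\downc x_1$ (closed by Proposition~\ref{comp_closed}) is disjoint from $A_2$; applying Proposition~\ref{prop:orderUri} to the dual partially ordered compact space $X^\op$ yields $\psi_1\in CX$ with $\psi_1(x_1)=1$ and $\psi_1\equiv 0$ on $A_2$, and symmetrically $\psi_2\in CX$ with $\psi_2(x_2)=1$ and $\psi_2\equiv 0$ on $A_1$. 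Then $\Phi_A(\psi_1)=\Phi_A(\psi_2)=1$ while, on every point of $A=A_1\cup A_2$, one of $\psi_1,\psi_2$ vanishes so (using $u\otimes 0=0$, again from the left-adjoint property) $\Phi_A(\psi_1\otimes\psi_2)=0\neq 1=\Phi_A(\psi_1)\otimes\Phi_A(\psi_2)$, contradicting \Ten.

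The main subtlety is the handedness issue: elements of $CX$ go into $[0,1]^\op$, so the Urysohn-type separation needs to be applied in the dual to produce the correct kind of monotonicity, and one must verify that $\downc x_i$ is genuinely closed (Proposition~\ref{comp_closed}) and disjoint from the $A_j$ that excludes $x_i$. Everything else reduces to properties of suprema and of continuous t-norms that were established earlier.
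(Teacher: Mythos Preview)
Your proof is correct. The forward direction of (2) and part (1) match the paper's (brief) treatment, but your converse of (2) takes a genuinely different route: by choosing the witnesses $x_1\in A\setminus A_2$ and $x_2\in A\setminus A_1$ \emph{inside} $A$, you force $\Phi_A(\psi_1)=\Phi_A(\psi_2)=1$ outright, so the failure of \Ten{} is the bare inequality $0\neq 1$. The paper instead takes arbitrary $x\notin A_1$, $y\notin A_2$, obtains $\Phi_A(\psi_1)\otimes\Phi_A(\psi_2)=0$, and then invokes Corollary~\ref{cor:no-zero-div} (together with a further application of \Ten{} to pass from $\Phi_A(\psi_2)^n$ to $\Phi_A(\psi_2^n)$) to conclude $x\notin A$ or $y\notin A$. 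Your argument is strictly more elementary, avoiding the zero-divisor analysis for continuous t-norms; the paper's version, while longer, establishes the slightly stronger auxiliary statement that \Ten{} forces $x\notin A$ or $y\notin A$ for \emph{all} $x\notin A_1$, $y\notin A_2$.
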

\begin{proof}
  (\ref{item:5}) is clear, and so is the implication ``$\implies$'' in
  (\ref{item:6}). Assume now that $\Phi_A$ satisfies \Ten{} and let
  $A_1,A_2\subseteq X$ be closed upper subsets with $A_1\cup
  A_2=A$. Let $x\notin A_1$ and $y\notin A_2$. We find
  $\psi_1,\psi_2\in CX$ with
  \begin{align*}
    \psi_1(x)&=1, & \psi_2(y)&=1, & \forall z\in A\,.\,\psi_1(z)=0\text{ or }\psi_2(z)=0.
  \end{align*}
  Therefore
  \[
    0=\Phi_A(\psi_1\otimes\psi_2)=\Phi_A(\psi_1)\otimes\Phi_A(\psi_2).
  \]
  By Corollary~\ref{cor:no-zero-div}, $\Phi_A(\psi_1)=0$ or, for some
  $n\in\N$, $\Phi_A(\psi_2^n)=\Phi_A(\psi_2)^n=0$, hence $x\notin A$
  or $y\notin A$. We conclude that $A=A_1$ or $A=A_2$.
\end{proof}

\begin{corollary}\label{cor:restriction-to-functions}
  Let $\varphi:X\modto Y$ in $\POSCHDIST$. Then:
  \begin{enumerate}
  \item $\varphi$ is a total relation if and only of $C\varphi$
    preserves $1$.
  \item $\varphi$ is a partial function (deterministic relation in
    \citep[Definition 3.3.3]{Keg99}) if and only if $C\varphi$
    preserves $\otimes$.
  \end{enumerate}
\end{corollary}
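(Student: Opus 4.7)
The plan is to reduce each claim pointwise to Proposition~\ref{prop:functional-relation}. Writing $\varphi(x)\subseteq Y$ for the closed upper subset obtained by evaluating the Kleisli arrow $\varphi:X\to VY$ at a point $x\in X$, the explicit formula for $C\varphi$ yields
\[
(C\varphi(\psi))(x)=\sup_{y\in\varphi(x)}\psi(y)=\Phi_{\varphi(x)}(\psi),
\]
for every $\psi\in CY$. Since the algebraic structure on both $CX$ and $CY$ is defined pointwise (as the initial lift described in Proposition~\ref{prop:induced-dual-adjunction}), $C\varphi$ preserves the monoid unit $1\in CY$ (respectively the tensor $\otimes$) if and only if each functional $\Phi_{\varphi(x)}\colon CY\to[0,1]$ satisfies condition \Top{} (respectively condition \Ten).

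For (1), I would then invoke Proposition~\ref{prop:functional-relation}(\ref{item:5}): $\Phi_{\varphi(x)}$ satisfies \Top{} exactly when $\varphi(x)\neq\varnothing$. Combined with the pointwise observation above, this gives that $C\varphi$ preserves $1$ precisely when $\varphi(x)\neq\varnothing$ for every $x\in X$, which is the definition of totality of the distributor $\varphi$.

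For (2), the same method, now using Proposition~\ref{prop:functional-relation}(\ref{item:6}), shows that $C\varphi$ preserves $\otimes$ if and only if each $\varphi(x)\subseteq Y$ is irreducible. I would then identify irreducibility with the deterministic-relation condition: by sobriety of the stably compact space associated with $Y$, the irreducible closed upper subsets of $Y$ are precisely $\varnothing$ together with the principal up-sets $\upc y$ for $y\in Y$; hence the Kleisli morphisms $X\to VY$ taking values in the irreducibles are exactly the partial functions of $\POSCHDIST$.

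I expect the main obstacle to be bookkeeping rather than calculation: one must carefully match the definition of ``deterministic relation'' in the ordered compact setting with ``$\varphi(x)$ is irreducible for every $x$''. Once that identification is in hand, each direction of the corollary follows immediately from a pointwise application of Proposition~\ref{prop:functional-relation}, using the fact that the operations in $CX$ are computed coordinatewise from those in $[0,1]$.
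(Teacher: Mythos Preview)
Your proposal is correct and matches the paper's intended argument: the corollary is stated without proof because it follows immediately from Proposition~\ref{prop:functional-relation} via exactly the pointwise reduction you describe, together with the identification (recorded just after Definition~\ref{d:def:1}) of irreducible closed upper subsets with $\varnothing$ and the principal up-sets $\upc y$.
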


Our next goal is to invert the process $A\mapsto\Phi_A$. Firstly,
following \citep{Sha92}, we introduce the subsequent notation.
\begin{itemize}
\item For every map $\psi:X\to[0,1]$,
  $\Zero(\psi)=\{x\in X\mid \psi(x)=0\}$ denotes the zero-set of
  $\psi$. If $\psi$ is a monotone and continuous map
  $\psi:X\to[0,1]^\op$, then $\Zero(\psi)$ is an closed upper subset
  of $X$.
\item For every map $\Phi:CX\to[0,1]$, we put
  \[
    \Zero(\Phi)=\bigcap\{\Zero(\psi)\mid\psi\in
    CX,\,\Phi(\psi)=0\}\subseteq X.
  \]
  Note that $\Zero(\Phi)$ is a closed upper subset of $X$.
\end{itemize}

There is arguably a more natural candidate for an inverse of
$j_X$. First note that, given a set $\{A_i\mid i\in I\}$ of closed
upper subsets of $X$ with $A=\overline{\bigcup_{i\in I}A_i}$, for
every $\psi\in CX$ one verifies
\[
  \Phi_A(\psi) =\sup_{x\in\bigcup_{i\in I}A_i}\psi(x)
  =\sup_{i\in I}\Phi_{A_i}(\psi).
\]
Hence, the monotone map $j_X$ preserves infima\footnote{Note that the
  order is reversed.} and therefore has a left adjoint which sends a
morphism $\Phi:CX\to[0,1]$ to
\[
  \Anti(\Phi)=\bigcap_{\psi\in CX}\psi^{-1}[0,\Phi(\psi)].
\]
In the sequel it will be convenient to consider the maps $\Zero$ and
$\Anti$ defined on the set $\{\Phi:CX\to[0,1]\}$ of all maps from $CX$
to $[0,1]$. We have the following elementary properties. 

\begin{lemma}
  Let $X$ be a partially ordered compact space $X$.
  \begin{enumerate}
  \item The maps $\Anti,\Zero:\{\Phi:CX\to[0,1]\}\to VX$ are monotone.
  \item $\Anti(\Phi)\subseteq\Zero(\Phi)$, for every map
    $\Phi:CX\to[0,1]$.
  \item For every $A\in VX$,
    $\Zero\cdot j_X(A))=A=\Anti\cdot j_X(A))$.
  \item For every map $\Phi:CX\to[0,1]$ and every $\psi\in CX$,
    $j_X\cdot\Anti(\Phi)(\psi)\le\Phi(\psi)$.
  \end{enumerate}
\end{lemma}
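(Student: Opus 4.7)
The plan is to verify each of the four assertions directly from the definitions; the only nontrivial input is the order-Urysohn separation of Proposition~\ref{prop:orderUri}. I first fix the orders: on $VX$ the order is containment $\supseteq$, and on $\{\Phi\colon CX\to[0,1]\}$ I take the pointwise order reversed from the numeric order, so that $j_X$, which sends $A\supseteq B$ to $\Phi_A\ge \Phi_B$ pointwise, is monotone (this is precisely the convention implicit in the discussion immediately preceding the lemma). Before anything else, I need to check that $\Anti(\Phi)$ and $\Zero(\Phi)$ are bona fide elements of $VX$: each $\psi\in CX$ is continuous and antimonotone (since $\psi\colon X\to [0,1]^\op$), so each $\psi^{-1}[0,t]$ and each $\Zero(\psi)$ is closed and upper, and arbitrary intersections preserve both properties.

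For (1), if $\Phi\le \Phi'$ (i.e., $\Phi(\psi)\ge \Phi'(\psi)$ for all $\psi$), then $\psi^{-1}[0,\Phi(\psi)]\supseteq \psi^{-1}[0,\Phi'(\psi)]$ yields $\Anti(\Phi)\supseteq \Anti(\Phi')$, and $\{\psi\mid \Phi(\psi)=0\}\subseteq \{\psi\mid \Phi'(\psi)=0\}$ yields $\Zero(\Phi)\supseteq \Zero(\Phi')$; both mean $\le$ in $VX$. For (2), if $x\in \Anti(\Phi)$ and $\Phi(\psi)=0$, then $\psi(x)\le \Phi(\psi)=0$ puts $x\in \Zero(\psi)$, so intersecting over such $\psi$ gives $x\in \Zero(\Phi)$. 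For (4), each $x\in \Anti(\Phi)$ satisfies $\psi(x)\le \Phi(\psi)$ by the very definition of $\Anti$, hence $j_X\cdot \Anti(\Phi)(\psi)=\sup_{x\in \Anti(\Phi)}\psi(x)\le \Phi(\psi)$.

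The main obstacle sits inside (3), specifically the inclusion $\Zero(\Phi_A)\subseteq A$. The identity \emph{$\Phi_A(\psi)=0$ iff $A\subseteq \Zero(\psi)$} is immediate from $\Phi_A(\psi)=\sup_{x\in A}\psi(x)$ and supplies directly $A\subseteq \Zero(\Phi_A)$ and $A\subseteq \Anti(\Phi_A)$ (for the latter, any $x\in A$ satisfies $\psi(x)\le \sup_{y\in A}\psi(y)=\Phi_A(\psi)$). For the reverse, given $x\notin A$, the fact that $A$ is an \emph{upper} set forces $\downc x\cap A=\varnothing$: any $z\le x$ with $z\in A$ would drag $x$ into $A$ by upward-closure. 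By Proposition~\ref{comp_closed}, $\downc x$ is closed (and it is lower by construction), while $A$ is closed and upper, so Proposition~\ref{prop:orderUri} provides a monotone continuous $\varphi\colon X\to[0,1]$ with $\varphi|_A=1$ and $\varphi(x)=0$. Setting $\psi=1-\varphi$ gives $\psi\in CX=\POSCH(X,[0,1]^\op)$ with $\psi|_A=0$ and $\psi(x)=1$, so $A\subseteq \Zero(\psi)$ but $x\notin \Zero(\psi)$, exhibiting $x\notin \Zero(\Phi_A)$.

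Assembling the chain $\Anti(\Phi_A)\subseteq \Zero(\Phi_A)\subseteq A\subseteq \Anti(\Phi_A)$ (using part (2), the separation argument, and the direct inclusion noted above) finishes (3). The only substantive ingredient beyond unwinding definitions is the order-theoretic Urysohn lemma together with the elementary observation about the interaction of upper sets with principal down-sets; everything else is bookkeeping.
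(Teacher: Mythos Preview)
Your proof is correct. The paper states this lemma without proof, treating all four items as routine consequences of the definitions together with the order-theoretic Urysohn lemma (Proposition~\ref{prop:orderUri}); your argument supplies exactly these details, and in particular your handling of the one genuinely nontrivial inclusion $\Zero(\Phi_A)\subseteq A$ via $\downc x\cap A=\varnothing$ and the separation of $\downc x$ from $A$ is the intended one.
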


\begin{corollary}
  For every partially ordered compact space $X$, the map
  $j_X:VX\to GCX$ is an order-embedding.
\end{corollary}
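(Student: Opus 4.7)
The plan is to invoke the preceding lemma and check that $j_X$ preserves and reflects the order separately, which together give an order-embedding.

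For monotonicity, I would argue directly from the formula defining $j_X$. Recall the order on $VX$ is containment $\supseteq$, while the order on $GCX$ is inherited pointwise from $[0,1]^\op$. If $A \leq B$ in $VX$, i.e.\ $A \supseteq B$, then for every $\psi \in CX$,
\[
  \Phi_A(\psi) \;=\; \sup_{x \in A}\psi(x) \;\geq\; \sup_{x \in B}\psi(x) \;=\; \Phi_B(\psi),
\]
so $\Phi_A \leq \Phi_B$ with respect to the order of $GCX$. Hence $j_X$ is monotone.

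For order-reflection, the key is assertion (3) of the lemma, which says that $\Zero \cdot j_X = 1_{VX}$, together with assertion (1) that $\Zero$ is monotone. Given $A, B \in VX$ with $j_X(A) \leq j_X(B)$, monotonicity of $\Zero$ yields
\[
  A \;=\; \Zero(j_X(A)) \;\leq\; \Zero(j_X(B)) \;=\; B,
\]
which is exactly what is needed. Combining the two directions shows that $j_X$ is an order-embedding.

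There is essentially no obstacle here: all the substantial content lives in the preceding lemma, and the corollary is just the standard observation that a monotone map with a monotone left inverse is an order-embedding. The only minor thing to be careful about is keeping track of the direction of the order on $VX$ and on $GCX$, both of which involve an opposite ordering relative to set-theoretic containment and pointwise comparison of $[0,1]$-valued functions respectively.
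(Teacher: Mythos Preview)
Your proof is correct and follows exactly the implicit argument the paper has in mind: the lemma provides a monotone retraction (either $\Zero$ or $\Anti$) of the monotone map $j_X$, and a monotone map with a monotone left inverse is automatically an order-embedding. The paper states the corollary without proof precisely because this is immediate from parts (1) and (3) of the preceding lemma, which is what you invoke.
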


Now, we wish to give conditions on $\Phi:CX\to[0,1]$ so that $j_X$
restricts to a bijection between $VX$ and the subset of
$\{\Phi:CX\to[0,1]\}$ defined by these conditions. In particular, we
consider:
\begin{enumerate}[label=(\Alph*)]
\item\label{CondA} For all $x\in X$ and all $\psi\in CX$, if
  $\psi(x)>\Phi(\psi)=0$, then there exists some $\bar{\psi}\in CX$
  with $\bar{\psi}(x)=1$ and $\Phi(\bar{\psi})=0$.
\end{enumerate}

\begin{lemma}
  Let $X$ be a partially ordered compact space.
  \begin{enumerate}
  \item If $\Phi:CX\to[0,1]$ satisfies \Mon, \Act{} and \LTen, then
    $\Phi$ satisfies~\ref{CondA}.
  \item If the quantale $[0,1]$ does not have nilpotent elements and
    $\Phi:CX\to[0,1]$ satisfies \Mon{} and \Act, then $\Phi$
    satisfies~\ref{CondA}.
  \end{enumerate}
\end{lemma}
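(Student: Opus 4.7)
Given $\psi\in CX$ with $u:=\psi(x)\in(0,1]$ and $\Phi(\psi)=0$, the plan is to produce $\bar{\psi}$ by an order-theoretic Urysohn construction and then to clean up either by iterating \LTen\ or by invoking the absence of nilpotents. The case $u=1$ is trivial ($\bar{\psi}=\psi$ works), so assume $0<u<1$ and fix $\epsilon\in(0,u)$. Regarding $\psi$ as an antitone continuous map $X\to[0,1]$, the sets
\[
  A=\psi^{-1}[u,1]\hspace{1em}\text{and}\hspace{1em}B=\psi^{-1}[0,\epsilon]
\]
are, respectively, a closed lower set containing $x$ and a closed upper set, and they are disjoint. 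Applying Proposition~\ref{prop:orderUri} with the roles of upper/lower reversed and post-composing with the isomorphism $[0,1]\simeq[0,1]^{\op}$ from Proposition~\ref{prop:initial-cogenerator} yields $\bar{\psi}\in CX$ with $\bar{\psi}(y)=1$ for $y\in A$ and $\bar{\psi}(y)=0$ for $y\in B$; in particular $\bar{\psi}(x)=1$.

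Next I would compare $\epsilon\otimes\bar{\psi}$ with $\psi$ pointwise. If $y\in B$ then $\bar{\psi}(y)=0$ and $\epsilon\otimes\bar{\psi}(y)=0\le\psi(y)$; if $y\notin B$ then $\psi(y)>\epsilon$ and $\bar{\psi}(y)\le 1$, so monotonicity of $\otimes$ and the unit law give $\epsilon\otimes\bar{\psi}(y)\le\epsilon\otimes 1=\epsilon<\psi(y)$. Hence $\epsilon\otimes\bar{\psi}\le\psi$ pointwise, and combining \Mon\ with \Act\ yields
\[
  \epsilon\otimes\Phi(\bar{\psi})=\Phi(\epsilon\otimes\bar{\psi})\le\Phi(\psi)=0.
\]
Since $\epsilon>0$, Corollary~\ref{cor:no-zero-div} then furnishes some $n\in\N$ with $\Phi(\bar{\psi})^n=0$.

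At this point the two parts diverge. Under the hypothesis of~(2) the absence of nilpotents, via Corollary~\ref{cor:no-zero-div}, forces $\Phi(\bar{\psi})=0$ directly, so $\bar{\psi}$ itself witnesses~\ref{CondA}. Under the hypothesis of~(1), iterating \LTen\ produces $\Phi(\bar{\psi}^n)\le\Phi(\bar{\psi})^n=0$; the pointwise power $\bar{\psi}^n$ is still a morphism $X\to[0,1]^{\op}$ because $\otimes$ is continuous and monotone in each variable, and $\bar{\psi}^n(x)=1^n=1$, so $\bar{\psi}^n\in CX$ witnesses~\ref{CondA}.

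The main subtlety is geometric rather than algebraic: the natural algebraic candidate $\bar{\psi}(y)=\hom(u,\psi(y))$ fails to be continuous as soon as $\otimes$ has non-trivial idempotents (for instance when $\otimes=\wedge$), which is precisely one of the scenarios covered by~(2). Inserting a strict slack $\epsilon<u$ in the Urysohn step circumvents this failure of continuity, at the cost of replacing the multiplier $u$ by $\epsilon$; both the nilpotent-free hypothesis in~(2) and the power-trick via \LTen\ in~(1) are arranged precisely to absorb that weakening.
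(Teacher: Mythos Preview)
Your proof is correct and follows essentially the same approach as the paper: separate the ``small'' level set of $\psi$ from $x$ by an order-Urysohn function $\bar\psi$, compare $\epsilon\otimes\bar\psi\le\psi$ pointwise to get $\epsilon\otimes\Phi(\bar\psi)=0$, and then invoke Corollary~\ref{cor:no-zero-div} together with either the no-nilpotents hypothesis or the \LTen-power trick. The only cosmetic differences are that the paper uses a single set $A=\psi^{-1}[0,u]$ (your $B$) rather than both level sets, and phrases the pointwise comparison as $u\otimes\bar\psi\le u\wedge\bar\psi\le\psi$.
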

\begin{proof}
  Assume $\psi(x)>\Phi(\psi)=0$. Put $v=\psi(x)$ and take $u$ with
  $0<u<v$. Put $A=\psi^{-1}([0,u])$. By
  Proposition~\ref{prop:orderUri}, there is some $\bar{\psi}\in CX$
  with $A\subseteq \Zero(\bar{\psi})$ and
  $\bar{\psi}(x)=1$. Furthermore,
  \[
    u\otimes\bar{\psi}\le u\wedge\bar{\psi}\le\psi
  \]
  and therefore $u\otimes\Phi(\bar{\psi})\le\Phi(\psi)=0$. Since
  $u\neq 0$, we get $\Phi(\bar{\psi})^n=0$ for some $n\in\N$. If there
  are no nilpotent elements, then $\Phi(\bar{\psi})=0$. In general,
  using condition \LTen{} we obtain $\Phi(\psi^n)\le\Phi(\psi')^n=0$
  and $\psi^n(x)=1$.
\end{proof}

Inspired by \citeauthor{Sha92}'s proof we get the following result.

\begin{proposition}\label{prop:Phi-less-or-equal}
  Let $X$ be a partially ordered compact space. For every
  $\Phi:CX\to[0,1]$ satisfying \Mon, \Act, \Sup{} and \CondA,
  \[
    \Phi(\psi)\le j_X\cdot \Zero(\Phi)(\psi),
  \]
  for all $\psi\in CX$.
\end{proposition}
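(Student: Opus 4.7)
The approach is by contradiction. Set $v := \Phi(\psi)$ and $r := j_X\cdot\Zero(\Phi)(\psi) = \sup_{x\in\Zero(\Phi)}\psi(x)$, and assume $v > r$. Choose $u$ with $r < u < v$ and consider the closed lower set $K := \{z\in X\mid \psi(z)\ge u\}$, which by the choice of $u$ is disjoint from the closed upper set $\Zero(\Phi)$. The plan is to prove, for every sufficiently small $\epsilon > 0$, the estimate $(1-\epsilon)\otimes v \le u$; continuity of $\otimes$ (Assumption~\ref{ass:general-assumption}) then yields $v \le u$ upon letting $\epsilon \to 0^+$, contradicting $u < v$.

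For each $y \in K$, the condition $y \notin \Zero(\Phi) = \bigcap\{\Zero(\psi')\mid \Phi(\psi')=0\}$ produces some $\psi'\in CX$ with $\Phi(\psi')=0$ and $\psi'(y)>0$; \CondA{} then yields $\bar\psi_y\in CX$ with $\bar\psi_y(y)=1$ and $\Phi(\bar\psi_y)=0$. Fix $\epsilon \in (0,1)$ small enough that $(1-\epsilon)\otimes v > u$. The open sets $V_y := \{z\mid \bar\psi_y(z) > 1-\epsilon\}$ form an open cover of the compact set $K$; by extracting a finite subcover $V_{y_1},\dots,V_{y_n}$ and setting $\bar\psi := \bar\psi_{y_1}\vee\dots\vee\bar\psi_{y_n}\in CX$, one obtains $\bar\psi(z) > 1-\epsilon$ for all $z \in K$ and, by iterating \Sup, $\Phi(\bar\psi)=0$.

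The crux is the pointwise inequality
\[
(1-\epsilon)\otimes\psi \;\le\; \bar\psi \,\vee\, (u\otimes\mathbf{1})
\]
in $CX$, where $\mathbf{1}$ is the constant function $1$. On $K$ this holds because $(1-\epsilon)\otimes\psi(z)\le 1-\epsilon < \bar\psi(z)$, and off $K$ because $(1-\epsilon)\otimes\psi(z)\le\psi(z) < u$. Applying \Mon, \Sup{} and \Act{} together with $\Phi(\mathbf{1})\le 1$ transforms this into
\[
(1-\epsilon)\otimes v \;=\; \Phi\bigl((1-\epsilon)\otimes\psi\bigr) \;\le\; \Phi(\bar\psi)\vee\bigl(u\otimes\Phi(\mathbf{1})\bigr) \;\le\; 0\vee u \;=\; u,
\]
which is the required estimate.

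The main obstacle is identifying the correct pointwise majorant. The tempting bound $\psi\le\bar\psi\vee(u\otimes\mathbf{1})$ fails on $K$: the finite-cover construction can only push $\bar\psi$ strictly above the level $1-\epsilon$, not strictly above $\psi$, and $\psi$ itself may approach $1$ on $K$. Pre-scaling $\psi$ by $(1-\epsilon)$ is the device that creates exactly the margin needed, and Assumption~\ref{ass:general-assumption}'s continuity of $\otimes$ is precisely what allows this correction to wash out in the limit.
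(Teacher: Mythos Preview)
Your proof is correct and follows essentially the same route as the paper's: pick $u$ strictly between $\sup_{x\in\Zero(\Phi)}\psi(x)$ and $\Phi(\psi)$, use \CondA{} and compactness to build finitely many $\psi_i$ with $\Phi(\psi_i)=0$ whose $(1-\epsilon)$-supports cover the set where $\psi\ge u$, derive a pointwise majorisation of $(1-\epsilon)\otimes\psi$, and push it through $\Phi$. The only cosmetic differences are that the paper covers all of $X$ (using the open set $\{\psi<u\}$ together with the $\supp_\alpha(\psi_i)$) and bounds by $u\vee\bigvee_i(\psi_i\otimes\psi)$ rather than your $u\vee\bigvee_i\psi_i$, and that your presentation mixes the limit argument $\epsilon\to 0$ with a direct choice of $\epsilon$ satisfying $(1-\epsilon)\otimes v>u$; either one alone suffices.
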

\begin{proof}
  Let $\psi\in CX$, we wish to show that
  $\Phi(\psi)\le\sup_{x\in Z(\Phi)}\psi(x)$. To this end, consider an
  element $u\in[0,1]$ with $\sup_{x\in Z(\Phi)}\psi(x)<u$. Put
  \[
    U=\{x\in X\mid\psi(x)<u\}.
  \]
  Clearly, $U$ is open and $Z(\Phi)\subseteq U$. Let now
  $x\in X\setminus Z(\Phi)$. There is some $\psi'\in CX$ with
  $\Phi(\psi')=0$ and $\psi'(x)\neq 0$; by \CondA{} we may assume
  $\psi'(x)=1$. Let now $\alpha<1$. For every $\psi'\in CX$ we put
  \[
    \supp_\alpha(\psi')=\{x\in X\mid\psi'(x)>\alpha\}.
  \]
  By the considerations above,
  \[
    X=U\cup\bigcup\{\supp_\alpha(\psi')\mid\psi'\in
    C(X),\Phi(\psi')=0\};
  \]
  since $X$ is compact, we find $\psi_1,\dots,\psi_n$ with
  $\Phi(\psi_i)=0$ and
  \[
    X=U\cup\supp_\alpha(\psi_1)\cup\dots\cup\supp_\alpha(\psi_n).
  \]
  Hence,
  \[
    \alpha\otimes\psi\le
    u\vee(\psi_1\otimes\psi)\vee\dots\vee(\psi_n\otimes\psi),
  \]
  and therefore
  \[
    \alpha\otimes\Phi(\psi) \le
    u\vee\Phi(\psi_1\otimes\psi)\vee\dots\vee\Phi(\psi_n\otimes\psi)
    \le u\vee\Phi(\psi_1)\vee\dots\Phi(\psi_n) =u.\qedhere
  \]
\end{proof}

Hence, under the conditions of the proposition above, we have
\[
  \sup_{x\in\Anti(\Phi)}\psi(x)\le\Phi(\psi)\le\sup_{x\in\Zero(\Phi)}\psi(x),
\]
for all $\psi\in CX$. We investigate now conditions on
$\Phi:CX\to[0,1]$ which guarantee $\Zero(\Phi)=\Anti(\Phi)$.

\begin{proposition}
  Assume $\otimes=*$ or $\otimes=\luk$. If $\Phi$ satisfies
  \Mon{}, \Act{} and \LTen{} then $\Zero(\Phi)=\Anti(\Phi)$.
\end{proposition}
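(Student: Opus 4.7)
The plan is to establish the nontrivial inclusion $\Zero(\Phi) \subseteq \Anti(\Phi)$. I would fix $x \in \Zero(\Phi)$ and $\psi \in CX$, write $v = \psi(x)$ and $w = \Phi(\psi)$, and argue by contradiction that $v > w$ is impossible. If $w = 0$, the definition of $\Zero(\Phi)$ would force $\psi(x) = 0 = v$ immediately, so I reduce to $w > 0$ and pick $u$ with $w < u < v$. In both cases the unified goal is to exhibit $\xi \in CX$ with $\xi(x) > 0$ and $\Phi(\xi) = 0$, which directly contradicts $x \in \Zero(\Phi)$.

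For $\otimes = \luk$ such a $\xi$ would be obtained in one algebraic step: taking $\xi = (1-u) \otimes \psi$ yields $\xi(y) = (\psi(y) - u)^+$, so $\xi \in CX$; then \Act{} gives $\Phi(\xi) = (1-u) \otimes w = (w - u)^+ = 0$ while $\xi(x) = v - u > 0$, closing the argument.

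For $\otimes = *$ there is no nilpotent-style trick, so I would combine an algebraic bound with a topological construction. First set $\eta = \hom(u,\psi) = \min(1,\psi/u) \in CX$; then $\eta(x) = 1$, and the pointwise inequality $u \otimes \eta \le \psi$ yields $\Phi(\eta) \le w/u$ via \Mon{} and \Act, whence $\Phi(\eta^n) \le (w/u)^n \to 0$ by iterating \LTen. Since $\eta \equiv 1$ on $\{\psi \ge u\}$, of which $x$ is an interior point, I would invoke Proposition~\ref{prop:orderUri} in $X^\op$, where $\{\psi \ge u'\}$ is closed upper and $\{\psi \le u\}$ is closed lower for any fixed $u' \in (u,v]$, to produce $\xi \in CX$ with $\xi|_{\{\psi \ge u'\}} = 1$ (so $\xi(x) = 1$) and $\xi|_{\{\psi \le u\}} = 0$. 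By construction $\xi$ vanishes wherever $\eta < 1$, so $\xi \le \eta^n$ pointwise for every $n$, and monotonicity of $\Phi$ then forces $\Phi(\xi) \le \Phi(\eta^n) \le (w/u)^n$ for all $n$, giving $\Phi(\xi) = 0$.

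The hard part is precisely the multiplicative case: without nilpotents, $\Phi(\xi) = 0$ cannot arise from a single algebraic identity, so one must instead arrange for $\xi$ to sit pointwise below \emph{every} power $\eta^n$ simultaneously. This is achieved by confining the support of $\xi$ to the set $\{\eta = 1\} = \{\psi \ge u\}$, on which $\eta^n = 1 \ge \xi$ is automatic, while off this set $\xi = 0$ gives the inequality for free.
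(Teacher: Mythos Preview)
Your proof is correct but takes a genuinely different route from the paper's in both cases.

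For $\otimes=\luk$, the paper argues the contrapositive: starting from $\psi(x)>\Phi(\psi)$, it applies the $u$-power with $u=\psi(x)$ (using $\Phi(u\pitchfork\psi)\le u\pitchfork\Phi(\psi)$ from Remark~\ref{d:rem:1}) to reduce to $\psi(x)=1$, and then invokes nilpotency to find $n$ with $\Phi(\psi^n)\le\Phi(\psi)^n=0$. You instead use the \emph{copower} $(1-u)\otimes-$, which for $\luk$ is truncated subtraction, and obtain $\Phi(\xi)=0$ in a single application of \Act. Your argument is shorter and avoids both $u$-powers and \LTen{} in this case.

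For $\otimes=*$, the paper exploits cancellativity of multiplication: it shows that for any open lower $U$ meeting $\Zero(\Phi)$ one has $\inf_{U}\psi\le\Phi(\psi)$, by choosing $\psi'\in CX$ supported in $U$ with $\psi'(z)=1$ at some $z\in U\cap\Zero(\Phi)$, deducing $u*\Phi(\psi')\le\Phi(\psi)*\Phi(\psi')$ from $u*\psi'\le\psi*\psi'$, and cancelling $\Phi(\psi')\neq 0$. You instead bound $\Phi(\eta^n)\le(w/u)^n\to 0$ via iterated \LTen, and then construct a bump function $\xi$ supported inside $\{\eta=1\}$ so that $\xi\le\eta^n$ for every $n$, forcing $\Phi(\xi)=0$. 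The paper's argument is slicker here (one inequality plus cancellation), while yours is more hands-on but has the virtue that the same template---produce $\eta$ with $\eta(x)=1$ and $\Phi(\eta)<1$, then squeeze---would also work for $\luk$ (where $(w/u)^n$ is replaced by nilpotency), giving a more uniform treatment of the two cases.
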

\begin{proof}
  We consider first $\otimes=*$, in this case the proof is essentially
  taken from \citep{Sha92}. For every $\psi\in CX$ and every open
  lower subset $U\subseteq X$ with $U\cap Z(\Phi)\neq\varnothing$, we
  show that $\inf_{x\in U}\psi(x)\le\Phi(\psi)$. To see this, put
  $u=\inf_{x\in U}\psi(x)$. Since there exists $z\in U\cap Z(\Phi)$,
  there is some $\psi'\in CX$ with $U^\complement\subseteq Z(\psi')$
  and $\psi'(z)=1$; thus $\Phi(\psi')\neq 0$. Then
  $u*\psi'\le\psi*\psi'$ and therefore
  $u*\Phi(\psi')\le\Phi(\psi)*\Phi(\psi')$. Since $\Phi(\psi')\neq 0$,
  we obtain $u\le\Phi(\psi)$.

  Let $x\in\Zero(\Phi)$, $\psi\in CX$ and $v>\Phi(\psi)$. Put
  $U=\{x\in X\mid\psi(x)>v\}$. By the discussion above,
  $U\cap\Zero(\Phi)=\varnothing$, hence $\psi(x)\le v$. Therefore we
  conclude that $x\in\Anti(\Phi)$.

  Consider now $\otimes=\luk$. Let $x\notin\Anti(\Phi)$. Then, there
  is some $\psi\in CX$ with $\psi(x)>\Phi(\psi)$. With $u=\psi(x)$, we
  obtain
  \[
    \hom(u,\psi(x))=1>\hom(u,\Phi(\psi))=
    u\pitchfork\Phi(\psi)\ge\Phi(u\pitchfork\psi),
  \]
  using Remark~\ref{d:rem:1} and that $\hom(u,-):[0,1]\to[0,1]$ is
  monotone and continuous. Therefore we may assume that
  $\psi(x)=1$. Since $\Phi(\psi)<1$, there is some $n\in\N$ with
  $\Phi(\psi)^n=0$, hence $\psi^n(x)=1$ and $\Phi(\psi^n)=0$. We
  conclude that $x\notin\Zero(\Phi)$.
\end{proof}

From the results above we obtain:

\begin{theorem}\label{thm:full-embedding-1}
  Assume that $\otimes=*$ or $\otimes=\luk$. Then the monad
  morphism $j$ between the monad $\mV$ on $\POSCH$ and the monad
  induced by the adjunction $C\dashv G$ of
  Proposition~\ref{prop:induced-dual-adjunction} is an
  isomorphism. Therefore the functor
  \[
    C:\POSCHDIST\longrightarrow\LaxMnd(\FinSup{[0,1]})^\op
  \]
  is fully faithful.
\end{theorem}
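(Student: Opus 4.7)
The plan is to prove that each component $j_X\colon VX\to GCX$ is an isomorphism in $\POSCH$. Once this is in hand, $j$ (already a monad morphism by Theorem~\ref{thm:Kleisli-adjunction} together with the computation preceding the statement) becomes an isomorphism of monads, so the induced functor $\POSCHDIST=\POSCH_\mV\to\POSCH_\mD$ (composition with $j$) is itself an isomorphism, where $\mD$ denotes the monad on $\POSCH$ induced by $C\dashv G$. Composing with the Kleisli comparison functor $\POSCH_\mD\to\LaxMnd(\FinSup{[0,1]})^{\op}$, which is always fully faithful because $\hom_{\POSCH_\mD}(X,Y)=\hom_{\POSCH}(X,GFY)\cong\hom_{\LaxMnd(\FinSup{[0,1]})}(FY,FX)$, yields the claimed full faithfulness of $C$.

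To prove that $j_X$ is an isomorphism in $\POSCH$, I observe that by the corollary following the elementary lemma on $\Anti$ and $\Zero$, $j_X$ is already a monotone order-embedding, and as a morphism of $\POSCH$ it is continuous. Since $VX$ is compact and $GCX$ is Hausdorff, it therefore suffices to establish bijectivity: a continuous bijection from a compact space to a Hausdorff space is automatically a homeomorphism, and an order-embedding bijection has a monotone inverse, so both conditions for an isomorphism in $\POSCH$ follow.

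For surjectivity, let $\Phi\colon CX\to[0,1]$ be a morphism in $\LaxMnd(\FinSup{[0,1]})$; by definition $\Phi$ satisfies \Mon, \Act, \Sup{} and \LTen. The lemma preceding Proposition~\ref{prop:Phi-less-or-equal} then guarantees that $\Phi$ also satisfies \CondA, so Proposition~\ref{prop:Phi-less-or-equal} applies and gives
\[
\Phi(\psi)\le j_X(\Zero(\Phi))(\psi)
\]
for every $\psi\in CX$. Combined with the converse inequality $j_X(\Anti(\Phi))(\psi)\le\Phi(\psi)$ from the elementary lemma on $\Anti$ and $\Zero$, and with the proposition immediately preceding the theorem (which invokes precisely the hypothesis $\otimes=*$ or $\otimes=\luk$) that yields $\Anti(\Phi)=\Zero(\Phi)$, I conclude $\Phi=j_X(A)=\Phi_A$ with $A=\Anti(\Phi)=\Zero(\Phi)\in VX$.

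The main obstacle is really the equality $\Anti(\Phi)=\Zero(\Phi)$, which is the only place where the specific shape of the continuous t-norm is used; it has already been discharged for $\otimes=*$ and $\otimes=\luk$ in the preceding proposition, and without it one can only sandwich $\Phi$ between $\Phi_{\Anti(\Phi)}$ and $\Phi_{\Zero(\Phi)}$ with no canonical way to pin down a representing closed upper set. Everything that remains once $j_X$ is identified as a bijection is bookkeeping: the naturality of $j$ and its compatibility with units and multiplications are part of the content of Theorem~\ref{thm:Kleisli-adjunction}, so componentwise invertibility upgrades $j$ from monad morphism to monad isomorphism without further work.
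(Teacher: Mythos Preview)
Your proposal is correct and follows essentially the same approach as the paper, which simply states ``From the results above we obtain'' without an explicit proof: you have spelled out precisely the intended argument, namely the sandwich $j_X(\Anti(\Phi))\le\Phi\le j_X(\Zero(\Phi))$ together with $\Anti(\Phi)=\Zero(\Phi)$ for surjectivity, and then the standard passage from componentwise invertibility of $j$ to full faithfulness of the Kleisli extension. The additional details you supply (compact--Hausdorff plus order-embedding to upgrade bijectivity to an isomorphism in $\POSCH$, and the explicit reason why the Kleisli comparison functor is fully faithful) are all correct and are exactly what the paper leaves to the reader.
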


For $\Phi:CY\to CX$ in $\LaxMnd(\FinSup{[0,1]})$, the corresponding
distributor $\varphi:X\modto Y$ is given by
\[
  x\,\varphi\,y \iff y\in\bigcap_{\Phi(\psi)(x)=0}Z(\psi).
\]

From Corollary~\ref{cor:restriction-to-functions} one obtains:

\begin{corollary}
  Assume that $\otimes=*$ or $\otimes=\luk$. Then the functor
  \[
    C:\POSCH\longrightarrow\Mnd(\FinSup{[0,1]})^\op
  \]
  is fully faithful.
\end{corollary}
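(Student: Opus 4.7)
The plan is to deduce this corollary directly from Theorem~\ref{thm:full-embedding-1} together with Corollary~\ref{cor:restriction-to-functions}. Recall that $\POSCH$ embeds into $\POSCHDIST=\POSCH_\mV$ via the Kleisli inclusion which sends a monotone continuous map $f\colon X\to Y$ to the distributor $\varphi_f\colon X\modto Y$ defined by $x\,\varphi_f\,y\iff f(x)\le y$; equivalently, $\varphi_f=e_Y\cdot f$ in the Kleisli composition. The functor $C\colon\POSCH\to\Mnd(\FinSup{[0,1]})^\op$ of the statement is just the composite of this inclusion with the functor $C\colon\POSCHDIST\to\LaxMnd(\FinSup{[0,1]})^\op$ from Theorem~\ref{thm:full-embedding-1}, corestricted along $\Mnd(\FinSup{[0,1]})\hookrightarrow\LaxMnd(\FinSup{[0,1]})$; the corestriction is legitimate because for a morphism $f$ of $\POSCH$ the graph $\varphi_f$ is both total and a partial function, so $C\varphi_f$ preserves $1$ and $\otimes$ by Corollary~\ref{cor:restriction-to-functions}.

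Faithfulness is then immediate: $C\colon\POSCHDIST\to\LaxMnd(\FinSup{[0,1]})^\op$ is fully faithful by Theorem~\ref{thm:full-embedding-1}, and the Kleisli inclusion $\POSCH\hookrightarrow\POSCHDIST$ is faithful.

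For fullness, fix a morphism $\Phi\colon CY\to CX$ in $\Mnd(\FinSup{[0,1]})$. A fortiori $\Phi$ is a morphism in $\LaxMnd(\FinSup{[0,1]})$, so by Theorem~\ref{thm:full-embedding-1} there is a unique continuous distributor $\varphi\colon X\modto Y$ with $C\varphi=\Phi$. Since $\Phi$ is a monoid homomorphism it preserves both $1$ and $\otimes$; hence by Corollary~\ref{cor:restriction-to-functions} the distributor $\varphi$ is both total and a partial function. I then need to conclude that $\varphi=\varphi_f$ for some (necessarily unique) morphism $f\colon X\to Y$ in $\POSCH$. This is the step where I expect the only real work.

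The main obstacle is to verify that a continuous distributor $\varphi\colon X\modto Y$ that is simultaneously total and a partial function is precisely the graph of a $\POSCH$-morphism. The natural approach is to pass to the Kleisli presentation: $\varphi$ corresponds to a morphism $\bar\varphi\colon X\to VY$ in $\POSCH$ with $\bar\varphi(x)=\{y\in Y\mid x\,\varphi\,y\}$. ``Partial function'' means each $\bar\varphi(x)$ is either empty or of the form $\upc\{y_0\}$ for a unique smallest $y_0\in Y$, and ``total'' rules out the empty case; consequently $\bar\varphi$ factors set-theoretically through the unit $e_Y\colon Y\to VY$, say $\bar\varphi=e_Y\cdot f$ as maps of sets. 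Since $e_Y$ is an order-embedding and a topological embedding in $\POSCH$ (being a split monomorphism, with retraction given by the $\mV$-algebra structure on any $\mV$-algebra to which $Y$ maps, and directly verifiable from the definition of the topology on $VY$), the monotonicity and continuity of $\bar\varphi$ transfer to $f\colon X\to Y$, so $f$ is a morphism in $\POSCH$ with $\varphi_f=\varphi$ and hence $Cf=\Phi$. This establishes fullness and completes the proof.
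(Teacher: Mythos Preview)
Your proposal is correct and follows the same route the paper intends: the corollary is stated immediately after Theorem~\ref{thm:full-embedding-1} with the one-line justification ``From Corollary~\ref{cor:restriction-to-functions} one obtains'', and you have simply spelled out the details of that deduction. One small point: your parenthetical remark that $e_Y$ is a split monomorphism ``with retraction given by the $\mV$-algebra structure on any $\mV$-algebra to which $Y$ maps'' is not quite right, since an arbitrary $Y$ in $\POSCH$ need not carry a $\mV$-algebra structure; the clean argument is the one you also indicate, namely that $e_Y$ is an injective continuous map between compact Hausdorff spaces (hence a topological embedding) and is manifestly an order-embedding since $\upc y_1\supseteq\upc y_2$ iff $y_1\le y_2$.
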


Theorem~\ref{thm:full-embedding-1} does not hold for an arbitrary
continuous quantale structure on $[0,1]$. For instance, for
$\otimes=\wedge$, every $\Phi=u\wedge-:[0,1]\to[0,1]$ ($u\in[0,1]$)
satisfies \Mon, \Act, \Sup{} (and also \LTen), but $V1$ contains only
two elements. Similarly, the map
\[
  \Phi:[0,1]\times[0,1]\longrightarrow[0,1],\;(u,v)\longmapsto
  u\vee\left(\frac{1}{2}\wedge v\right)
\]
has all these properties and also satisfies $\Phi(1)=1$; however,
$\Phi$ is not induced by a relation $1\relto 2$.

To deal with the general case, we introduce the following condition on
a map $\Phi:CX\to[0,1]$ where $\ominus$ denotes truncated minus on
$[0,1]$.

\begin{description}[font=\normalfont,labelindent=\parindent]
\item [\namedlabel{Min}{(Min)}] For every $u\in[0,1]$ and every
  $\psi\in CX$, $\Phi(\psi\ominus u)=\Phi(\psi)\ominus u$.
\end{description}

Clearly, for every closed upper subset $A\subseteq X$, the map
$\Phi_A:CX\to[0,1]$ satisfies \Min.

\begin{proposition}\label{prop:Ineq-with-Min}
  Let $X$ be a partially ordered compact space and $\Phi:CX\to[0,1]$ a
  map satisfying \Min. Then
  \[
    \Anti(\Phi)=\Zero(\Phi).
  \]
\end{proposition}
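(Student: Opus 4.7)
The plan is to exploit the condition \Min{} with the carefully chosen parameter $u = \Phi(\psi)$ to turn an arbitrary $\psi \in CX$ into a function that $\Phi$ sends to $0$, so membership in $\Zero(\Phi)$ forces the desired inequality.

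One inclusion is immediate: the lemma just before Proposition~\ref{prop:Ineq-with-Min} records that $\Anti(\Phi) \subseteq \Zero(\Phi)$ for every map $\Phi : CX \to [0,1]$, so nothing about \Min{} is needed here. For the reverse inclusion $\Zero(\Phi) \subseteq \Anti(\Phi)$, I would fix $x \in \Zero(\Phi)$ and an arbitrary $\psi \in CX$, and aim to show $\psi(x) \le \Phi(\psi)$; since $\psi$ is arbitrary, this gives $x \in \Anti(\Phi)$.

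The key construction is the truncated function $\psi' := \psi \ominus \Phi(\psi)$, defined pointwise by $\psi'(y) = \psi(y) \ominus \Phi(\psi)$. I first need to check that $\psi' \in CX$: since $-\ominus u : [0,1] \to [0,1]$ is continuous and monotone for each fixed $u \in [0,1]$, it defines a morphism $[0,1]^\op \to [0,1]^\op$ in $\POSCH$, and composing with $\psi : X \to [0,1]^\op$ yields a morphism $\psi' : X \to [0,1]^\op$. Now applying \Min{} with $u = \Phi(\psi)$ gives
\[
  \Phi(\psi') = \Phi(\psi) \ominus \Phi(\psi) = 0.
\]
Hence $\psi'$ belongs to the family defining $\Zero(\Phi)$, so $x \in \Zero(\psi')$, meaning $\psi(x) \ominus \Phi(\psi) = 0$, which is exactly $\psi(x) \le \Phi(\psi)$.

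There is essentially no obstacle; the entire content is the one-line observation that \Min{} allows us to subtract $\Phi(\psi)$ itself from $\psi$ to land in the family of "zeros of $\Phi$''. The only thing that might be worth spelling out, for the reader, is the verification that $\psi \ominus u$ really is a morphism in $\POSCH$ from $X$ to $[0,1]^\op$, but this is routine from continuity and monotonicity of truncated subtraction.
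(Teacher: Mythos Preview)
Your proof is correct and is essentially the same as the paper's own argument: both use \Min{} with $u=\Phi(\psi)$ to obtain $\Phi(\psi\ominus\Phi(\psi))=0$, the only cosmetic difference being that the paper phrases it as the contrapositive (from $x\notin\Anti(\Phi)$ one finds $\psi$ with $(\psi\ominus\Phi(\psi))(x)>0$, hence $x\notin\Zero(\Phi)$). Your additional remark that $\psi\ominus u\in CX$ is a detail the paper leaves implicit.
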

\begin{proof}
  Assume $x\notin\Anti(\Phi)$. Then there is some $\psi\in CX$ with
  $\psi(x)>\Phi(\psi)$. Put $u=\Phi(\psi)$. Then
  $\Phi(\psi\ominus u)=0$ and $(\psi\ominus u)(x)>0$, hence
  $x\notin\Zero(\Phi)$.
\end{proof}

Therefore we obtain:

\begin{proposition}
  Let $X$ be a partially ordered compact space. The map
  \[
    j_X:V X\longrightarrow \{\Phi:CX\to[0,1]\mid \text{$\Phi$ satisfies
      \Mon, \Act, \Sup, \LTen{} and \Min}\},\,A\longmapsto\Phi_A
  \]
  is bijective. If the quantale $[0,1]$ does not have nilpotent
  elements, then $j_X$ is bijective even if the condition \LTen{} is
  dropped on the right hand side.
\end{proposition}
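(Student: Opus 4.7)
My plan is to establish injectivity and surjectivity separately, with surjectivity being the substantive part. Injectivity is immediate from the preceding corollary, which states that $j_X:VX \to GCX$ is already an order-embedding; and that $\Phi_A$ satisfies \Mon, \Act, \Sup{} and \LTen{} for every $A \in VX$ has already been observed in the text. Condition \Min{} for $\Phi_A$ is a direct calculation using that truncated minus commutes with suprema on $[0,1]$:
\[
  \Phi_A(\psi \ominus u) = \sup_{x \in A}\bigl(\psi(x) \ominus u\bigr) = \Phi_A(\psi) \ominus u.
\]

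For the surjectivity direction, let $\Phi:CX \to [0,1]$ satisfy \Mon, \Act, \Sup, \LTen{} and \Min. I would set $A := \Zero(\Phi)$, which automatically lies in $VX$ as a closed upper subset, and argue $\Phi = \Phi_A$. The key idea is to sandwich $\Phi$ between $\Phi_{\Anti(\Phi)}$ and $\Phi_{\Zero(\Phi)}$ and use \Min{} to force the two bounds to coincide. Concretely, the inequality $\Phi_{\Anti(\Phi)}(\psi) \leq \Phi(\psi)$ is part of the elementary properties of $\Anti$ already established. For the reverse bound $\Phi(\psi) \leq \Phi_{\Zero(\Phi)}(\psi)$, I would invoke Proposition~\ref{prop:Phi-less-or-equal}; its hypothesis \CondA{} is supplied by the earlier lemma deriving \CondA{} from \Mon, \Act{} and \LTen. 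Finally, Proposition~\ref{prop:Ineq-with-Min} uses \Min{} to yield $\Anti(\Phi) = \Zero(\Phi) = A$, collapsing the sandwich to the equality $\Phi = \Phi_A$.

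For the second assertion, suppose $[0,1]$ has no nilpotent elements and $\Phi$ satisfies only \Mon, \Act, \Sup{} and \Min. The argument above transcribes verbatim, since in this case the no-nilpotents branch of the same lemma still yields \CondA{} from \Mon{} and \Act{} alone, keeping Proposition~\ref{prop:Phi-less-or-equal} applicable; and the identification $\Anti(\Phi) = \Zero(\Phi)$ continues to follow from \Min{} via Proposition~\ref{prop:Ineq-with-Min}. I expect no serious obstacle: the whole proof is a short assembly of the bound $j_X(\Anti(\Phi)) \leq \Phi$, the bound $\Phi \leq j_X(\Zero(\Phi))$, and the equality $\Anti(\Phi) = \Zero(\Phi)$. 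The only point to watch is verifying that each cited result uses exactly the axioms available under the chosen hypothesis set; in particular, that \CondA{} is not silently needed elsewhere beyond its use in Proposition~\ref{prop:Phi-less-or-equal}.
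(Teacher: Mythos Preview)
Your proposal is correct and follows essentially the same approach as the paper, which presents the proposition as an immediate consequence (``Therefore we obtain:'') of the preceding results you cite: the order-embedding corollary for injectivity, the lemma producing \CondA{}, Proposition~\ref{prop:Phi-less-or-equal} for the upper bound, the adjunction inequality for the lower bound, and Proposition~\ref{prop:Ineq-with-Min} to collapse $\Anti(\Phi)=\Zero(\Phi)$. Your explicit verification that $\Phi_A$ satisfies \Min{} and your careful tracking of which axioms feed into \CondA{} in the two cases matches the paper's implicit reasoning exactly.
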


Accordingly, we introduce the categories
\begin{align*}
  \Mnd_\ominus(\FinSup{[0,1]}) && \text{and} && \LaxMnd_\ominus(\FinSup{[0,1]})
\end{align*}
defined as $\Mnd(\FinSup{[0,1]})$ and $\LaxMnd(\FinSup{[0,1]})$
respectively, but the objects have an additional action
$\ominus:X\times[0,1]\to X$ and the morphisms preserve it. With the
action $\ominus:[0,1]\times[0,1]\to [0,1],\,(u,v)\mapsto u\ominus v$,
the $[0,1]$-category $[0,1]$ is an object of both categories.  As
before (see Proposition~\ref{prop:induced-dual-adjunction} and
Theorem~\ref{thm:full-embedding-1}), we obtain:

\begin{theorem}\label{thm:full-embedding-2}
  Under Assumption~\ref{ass:general-assumption}, the dualising object
  $([0,1]^\op,[0,1])$ induces a natural dual adjunction
  \[
    \POSCH\adjunctop{G}{C}\LaxMnd_\ominus(\FinSup{[0,1]})^\op.
  \]
  Here $CX$ is given by $\ORDCH(X,[0,1]^\op)$ with all operations
  defined pointwise, and $GA$ is the space
  $\LaxMnd_\ominus(\FinSup{[0,1]})(A,[0,1])$ equipped with the initial
  topology with respect to all evaluation maps
  \[
    \ev_a:\LaxMnd_\ominus(\FinSup{[0,1]})(A,[0,1])\longrightarrow[0,1]^\op,\,\Phi\longmapsto\Phi(a).
  \]
  Furthermore, we obtain a commutative diagram
  \[
    \xymatrix{\POSCHDIST\ar[rr]^{C} && \LaxMnd_\ominus(\FinSup{[0,1]})^\op,\\
      & \POSCH\ar[ul]\ar[ur]_C}
  \]
  of functors, and the induced monad morphism $j$ between $\mV$ and
  the monad induced by $C\dashv G$ is an isomorphism. Therefore the
  functor
  \[
    C:\POSCHDIST\longrightarrow\LaxMnd_\ominus(\FinSup{[0,1]})^\op
  \]
  is fully faithful, and so is the functor
  \[
    C:\POSCH\longrightarrow\Mnd_\ominus(\FinSup{[0,1]})^\op.
  \]
\end{theorem}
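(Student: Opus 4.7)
The strategy is to adapt the arguments of Proposition~\ref{prop:induced-dual-adjunction} and Theorem~\ref{thm:full-embedding-1} to the enriched setting with the additional $\ominus$-action, letting condition \Min{} carry the weight that \LTen{} alone cannot when nilpotents are present.

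First, for the dual adjunction, I would verify that $\ominus\colon[0,1]^\op\times[0,1]\to[0,1]^\op$ is a morphism in $\POSCH$: continuity is clear, and since $(u,v)\mapsto\max(u-v,0)$ is monotone in $u$ and antitone in $v$, the map respects the reversed order on the target and the standard order on the second factor. Together with the operations already handled in Proposition~\ref{prop:induced-dual-adjunction}, Remark~\ref{rem:init-lax} and Theorem~\ref{thm:dual-adjunction} deliver the adjunction $C\dashv G$ in the stated form. For the factorisation through $\POSCHDIST$, I would apply Theorem~\ref{thm:Kleisli-adjunction} to the $\mV$-algebra structure $\sup\colon V([0,1]^\op)\to[0,1]^\op$; the only new verification beyond the one implicit before Proposition~\ref{prop:induced-dual-adjunction} is that $\kappa_X\colon\psi\mapsto(A\mapsto\sup_{x\in A}\psi(x))$ preserves $\ominus$, that is,
\[
\sup_{x\in A}(\psi(x)\ominus u)=\Bigl(\sup_{x\in A}\psi(x)\Bigr)\ominus u,
\]
which holds because $(-)\ominus u\colon[0,1]\to[0,1]$ preserves suprema.

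Next, I would prove that each component $j_X\colon VX\to GCX$ is an isomorphism in $\POSCH$. Injectivity (indeed, order-embedding) is already recorded in the corollary following the elementary properties of $\Zero$ and $\Anti$. For surjectivity, let $\Phi\colon CX\to[0,1]$ satisfy \Mon, \Act, \Sup, \LTen{} and \Min. The first part of the lemma preceding Proposition~\ref{prop:Phi-less-or-equal} supplies \CondA, so Proposition~\ref{prop:Phi-less-or-equal} yields $\Phi(\psi)\le j_X\cdot\Zero(\Phi)(\psi)$, while Proposition~\ref{prop:Ineq-with-Min} yields $\Anti(\Phi)=\Zero(\Phi)$. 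Combined with the elementary inequality $j_X\cdot\Anti(\Phi)\le\Phi$, this forces $\Phi=j_X(\Zero(\Phi))$. Hence $j_X$ is a continuous monotone bijection between partially ordered compact spaces, and is therefore an isomorphism in $\POSCH$; naturality in $X$ is automatic from the Kleisli construction.

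Finally, with $j\colon\mV\to\mD$ (where $\mD$ denotes the monad induced by $C\dashv G$) an isomorphism of monads, the standard comparison argument shows that $C\colon\POSCHDIST\to\LaxMnd_\ominus(\FinSup{[0,1]})^\op$ is fully faithful. The restricted embedding on $\POSCH$ then follows from Corollary~\ref{cor:restriction-to-functions}: a distributor $\varphi\colon X\modto Y$ lies in the image of $\POSCH\hookrightarrow\POSCHDIST$ precisely when $C\varphi$ preserves both the top element (totality) and $\otimes$ (partial-functionality), that is, precisely when it is a morphism of $\Mnd_\ominus(\FinSup{[0,1]})$. The main substantive step was already isolated in Proposition~\ref{prop:Ineq-with-Min}, namely that \Min{} collapses $\Anti$ and $\Zero$; the remaining work is essentially bookkeeping to combine this with the earlier surjectivity argument of Proposition~\ref{prop:Phi-less-or-equal} and the general machinery of Section~\ref{sec:dual-adjunctions}.
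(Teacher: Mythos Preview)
Your proposal is correct and follows essentially the same approach as the paper: the paper merely writes ``As before (see Proposition~\ref{prop:induced-dual-adjunction} and Theorem~\ref{thm:full-embedding-1}), we obtain'' before the statement, and your argument faithfully spells out that deferred reasoning, combining Proposition~\ref{prop:Phi-less-or-equal} (via \CondA{} obtained from \Mon, \Act, \LTen) with Proposition~\ref{prop:Ineq-with-Min} to show surjectivity of $j_X$, and then invoking Corollary~\ref{cor:restriction-to-functions} for the restriction to $\POSCH$.
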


\begin{remark}
  Once we know that $C:\POSCH\to\Mnd_\ominus(\FinSup{[0,1]})^\op$ is
  fully faithful, we can add on the right hand side further operations
  if they can be transported pointwise from $[0,1]$ to $CX$. For
  instance, if $\hom(u,-):[0,1]\to[0,1]$ is continuous, then $CX$ has
  $u$-powers with $(\psi\pitchfork u)(x)=\hom(u,\psi(x))$, for all
  $x\in X$. Furthermore, every morphism $\Phi:CX\to CY$ in
  $\Mnd_\ominus(\FinSup{[0,1]})^\op$ preserves $u$-powers.
\end{remark}

In \citeyear{Ban83}, \citeauthor{Ban83} showed that $\COMPHAUS$ fully
embeds into the category of distributive lattices equipped with
constants from $[0,1]$ and constant preserving lattice
homomorphisms. As we pointed out in Remark~\ref{rem:Ban}, instead of
adding constants to the lattice $CX$ of continuous $[0,1]$-valued
functions, one could as well consider an action $u\wedge\psi$ of
$[0,1]$ on the lattice $CX$. Therefore \citeauthor{Ban83}'s result
should appear as a special case of Theorem~\ref{thm:full-embedding-1}
(for $\otimes=\wedge$). Unfortunately, this is not immediately the
case since we need the additional operation $\ominus$. However, using
some arguments of \citep{Ban83}, we finish this section showing that
$\Phi_{Z(\Phi)}=\Phi$, for every compact Hausdorff space and every
$\Phi:CX\to[0,1]$ in $\Mnd(\FinSup{[0,1]})$.

In analogy to Proposition~\ref{prop:functional-relation}, we have:
\begin{proposition}
  Let $X$ be a partially ordered compact space and assume that
  $\Phi:CX\to[0,1]$ satisfies \Mon, \Act, \Sup and \LTen.
  \begin{enumerate}
  \item If $\Phi$ satisfies also \Top, then
    $\Zero(\Phi)\neq\varnothing$.
  \item If $\Phi$ satisfies also \Ten, then $\Zero(\Phi)$ is
    irreducible (see Definition~\ref{d:def:1}).
  \end{enumerate}
\end{proposition}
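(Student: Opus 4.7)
The plan is to leverage Proposition~\ref{prop:Phi-less-or-equal}, following the pattern of the proof of Proposition~\ref{prop:functional-relation}. Since the hypotheses \Mon, \Act{} and \LTen{} imply \CondA{} via the lemma immediately preceding Proposition~\ref{prop:Phi-less-or-equal}, that proposition yields the estimate
\[
  \Phi(\psi)\le\sup_{z\in\Zero(\Phi)}\psi(z)
\]
for every $\psi\in CX$. Both items of the proposition will follow almost mechanically from this inequality together with \Ten{} and Corollary~\ref{cor:no-zero-div}.

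For part~(1), I would apply the estimate to the constant function $1\in CX$. If $\Zero(\Phi)=\varnothing$, the right-hand side is the supremum of an empty family, that is, the bottom element $0$ of $[0,1]$, whereas \Top{} forces the left-hand side to be $1$; the resulting contradiction forces $\Zero(\Phi)\ne\varnothing$.

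For part~(2), I would argue by contradiction in the spirit of Proposition~\ref{prop:functional-relation}(\ref{item:6}). Suppose $\Zero(\Phi)=Z_1\cup Z_2$ with $Z_1,Z_2$ closed upper proper subsets of $\Zero(\Phi)$, and pick $x\in\Zero(\Phi)\setminus Z_1$ and $y\in\Zero(\Phi)\setminus Z_2$. Since $Z_1$ and $Z_2$ are upper we have $\downc x\cap Z_1=\varnothing$ and $\downc y\cap Z_2=\varnothing$, so applying Proposition~\ref{prop:orderUri} to the dual partially ordered compact space $X^\op$ (so as to produce maps that are monotone into $[0,1]^\op$, hence elements of $CX$) yields $\psi_1,\psi_2\in CX$ with $\psi_1(x)=1$, $\psi_2(y)=1$, $\psi_1|_{Z_1}=0$ and $\psi_2|_{Z_2}=0$. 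Then $\psi_1\otimes\psi_2$ vanishes on $\Zero(\Phi)=Z_1\cup Z_2$, so the estimate above gives $\Phi(\psi_1\otimes\psi_2)=0$, and \Ten{} upgrades this to $\Phi(\psi_1)\otimes\Phi(\psi_2)=0$. By Corollary~\ref{cor:no-zero-div} we conclude that either $\Phi(\psi_1)=0$ or $\Phi(\psi_2)^n=0$ for some $n\in\N$. In the first case $\Zero(\Phi)\subseteq\Zero(\psi_1)$, contradicting $\psi_1(x)=1$; in the second, a finite iteration of \Ten{} yields $\Phi(\psi_2^n)=\Phi(\psi_2)^n=0$, hence $\Zero(\Phi)\subseteq\Zero(\psi_2^n)$, contradicting $\psi_2^n(y)=1$. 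The only real delicacy in this argument is the correct invocation of Proposition~\ref{prop:orderUri}: because the dualising object is $[0,1]^\op$, the Urysohn-type separation must be taken in its ``dual'' form in order to produce order-reversing functions rather than order-preserving ones.
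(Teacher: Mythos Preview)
Your proof is correct and follows essentially the same approach as the paper. The paper's own proof is very terse---for part~(1) it applies Proposition~\ref{prop:Phi-less-or-equal} to the constant $1$ exactly as you do, and for part~(2) it simply refers back to the proof of Proposition~\ref{prop:functional-relation}; you have correctly identified and supplied the two adaptations that reference requires (using the estimate from Proposition~\ref{prop:Phi-less-or-equal} to force $\Phi(\psi_1\otimes\psi_2)=0$, and using the definition of $\Zero(\Phi)$ rather than the explicit formula for $\Phi_A$ to derive the contradictions).
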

\begin{proof}
  To see the first implication:
  $1=\Phi(1)\le\sup_{x\in \Zero(\Phi)}1$, hence
  $\Zero(\Phi)\neq\varnothing$. The proof of the second one is the
  same as the corresponding proof for
  Proposition~\ref{prop:functional-relation}.
\end{proof}

\begin{lemma}
  Let $X$ be a compact Hausdorff space and $\Phi:CX\to[0,1]$ in
  $\Mnd(\FinSup{[0,1]})$. We denote by $x_0$ the unique element of $X$
  with $\Zero(\Phi)=\{x_0\}$. Then, for every $\psi\in CX$,
  $\psi(x_0)=\Phi(\psi)$.
\end{lemma}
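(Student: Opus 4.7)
The inequality $\Phi(\psi)\le\psi(x_0)$ is immediate: since $\Phi$ lies in $\Mnd(\FinSup{[0,1]})$ it satisfies \Mon, \Act, \Sup{} and \LTen, hence by the earlier lemma also \CondA; combined with $\Zero(\Phi)=\{x_0\}$, Proposition~\ref{prop:Phi-less-or-equal} gives $\Phi(\psi)\le\sup_{x\in\{x_0\}}\psi(x)=\psi(x_0)$. The real content is the reverse inequality $\Phi(\psi)\ge\psi(x_0)$, which I plan to establish by fixing an arbitrary $c$ with $c<\psi(x_0)$ and showing $\Phi(\psi)\ge c$.

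The set $K=\psi^{-1}[0,c]$ is closed in $X$ and avoids $x_0$, so every $x\in K$ lies outside $\Zero(\Phi)$. By \CondA{} there is, for each such $x$, a function $\psi_x\in CX$ with $\psi_x(x)=1$ and $\Phi(\psi_x)=0$. Given $\epsilon>0$, the open sets $\{y\mid \psi_x(y)>1-\epsilon\}$ cover $K$, so by compactness of $K$ I extract a finite subcover indexed by $x_1,\dots,x_n$; setting $\tilde\psi=\psi_{x_1}\vee\dots\vee\psi_{x_n}$ and using \Sup{} yields $\tilde\psi\in CX$ with $\Phi(\tilde\psi)=0$ and $\tilde\psi(y)>1-\epsilon$ for every $y\in K$.

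The crux of the argument is the auxiliary function $h=\psi\vee(\tilde\psi\otimes c)$. On the one hand, $h$ is pointwise bounded below by the constant $(1-\epsilon)\otimes c$: on $K$ by the second summand and monotonicity of $\otimes$, on $X\setminus K$ by the first since $\psi>c\ge(1-\epsilon)\otimes c$. Because $\Phi$ preserves constants (by \Top{} and \Act) and is monotone, $\Phi(h)\ge(1-\epsilon)\otimes c$. On the other hand, \Sup{} and \Act{} together with $\Phi(\tilde\psi)=0$ give
\[
  \Phi(h)=\Phi(\psi)\vee\bigl(\Phi(\tilde\psi)\otimes c\bigr)=\Phi(\psi)\vee 0=\Phi(\psi),
\]
so $\Phi(\psi)\ge(1-\epsilon)\otimes c$. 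Continuity of $\otimes$ at $(1,c)$ lets me send $\epsilon\to 0$ to conclude $\Phi(\psi)\ge 1\otimes c=c$, and since $c<\psi(x_0)$ was arbitrary, $\Phi(\psi)\ge\psi(x_0)$.

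The main obstacle to anticipate is the design of the auxiliary $h$: one needs a function simultaneously admitting a useful pointwise lower bound (so that monotonicity of $\Phi$ gives information) and whose $\Phi$-value can be computed back to $\Phi(\psi)$. Placing $\tilde\psi\otimes c$ (rather than, say, $\tilde\psi$ or $\tilde\psi\wedge c$) inside a join with $\psi$ achieves both, because \Act{} together with $\Phi(\tilde\psi)=0$ makes the extra summand vanish under $\Phi$, while continuity of $\otimes$ guarantees that the pointwise lower bound $(1-\epsilon)\otimes c$ converges to $c$ as $\tilde\psi$ is driven towards $1$ on $K$; notably, the strict identity \Ten{} is not needed here, only the action axioms \Act{} and \Top{} together with \Sup{} and \CondA.
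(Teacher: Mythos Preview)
Your argument is correct, but it takes a considerably longer route than the paper's. The paper exploits the fact that $X$ is a compact Hausdorff space (i.e.\ discretely ordered), so Urysohn's lemma gives, for the closed set $K=\{x\mid\psi(x)\le u\}$ and the point $x_0\notin K$, a single $\psi'\in CX$ that is \emph{exactly} $1$ on $K$ and \emph{exactly} $0$ at $x_0$. Then $u\vee\psi'\le\psi\vee\psi'$ pointwise, and since the already-established inequality $\Phi(\psi')\le\psi'(x_0)=0$ kills the extra term, applying \Sup{} yields $u=\Phi(u)\le\Phi(\psi)$ in one step, with no $\epsilon$, no cover, and no limit. Your approach instead rebuilds a function that is only approximately $1$ on $K$ via \CondA{} and a compactness argument, which forces the more delicate auxiliary $\psi\vee(\tilde\psi\otimes c)$ and the passage $\epsilon\to 0$ through continuity of $\otimes$. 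What you gain is a proof that mirrors the pattern of Proposition~\ref{prop:Phi-less-or-equal} and never invokes Urysohn directly; what the paper gains is brevity by using the full separation strength available in $\COMPHAUS$.
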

\begin{proof}
  By Proposition~\ref{prop:Phi-less-or-equal},
  $\Phi(\psi)\le\psi(x_0)$. To see the reverse inequality, let
  $u<\psi(x_0)$. Then $x_0\notin\{x\in X\mid \psi(x)\le u\}$,
  therefore there is some $\psi'\in CX$ with $\psi'(x_0)=0$ and
  $\psi'$ is constant $1$ on $\{x\in X\mid \psi(x)\le u\}$. Hence,
  $u\vee\psi'\le\psi\vee\psi'$. Since $\Phi(\psi')\le\psi'(x_0)=0$, we
  conclude that $u=\Phi(u)\le\Phi(\psi)$.
\end{proof}

\begin{theorem}
  Under Assumption~\ref{ass:general-assumption}, the functor
  \[
    C:\COMPHAUS\longrightarrow\Mnd(\FinSup{[0,1]})^\op
  \]
  is fully faithful.
\end{theorem}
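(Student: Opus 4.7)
The plan is to leverage the preceding Lemma in the same spirit as Theorem~\ref{thm:full-embedding-2}, with the crucial preliminary step being that, for compact Hausdorff $X$, every morphism $\Phi:CX\to[0,1]$ in $\Mnd(\FinSup{[0,1]})$ is evaluation at a unique point. To establish this, observe that by the immediately preceding Proposition, $\Zero(\Phi)$ is a non-empty irreducible closed upper subset of $X$; since $X$ is viewed as a discretely ordered partially ordered compact space, this just means that $\Zero(\Phi)$ is a non-empty irreducible closed subset of the Hausdorff space $X$. Any such subset must be a singleton: if $a\neq b$ both lie in $A$, pick disjoint open neighbourhoods $U\ni a$ and $V\ni b$, whence $A=(A\setminus U)\cup(A\setminus V)$ is a decomposition into proper closed subsets. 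Thus $\Zero(\Phi)=\{x_0\}$ and the preceding Lemma yields $\Phi(\psi)=\psi(x_0)$ for every $\psi\in CX$.

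With this in hand, faithfulness is immediate: if $Cf=Cg$ for $f,g\colon X\to Y$ in $\COMPHAUS$, then $\psi\cdot f=\psi\cdot g$ for every $\psi\in CY$, and continuous $[0,1]$-valued functions separate points of $Y$ by Urysohn's lemma, so $f=g$. For fullness, let $\Phi\colon CY\to CX$ be a morphism in $\Mnd(\FinSup{[0,1]})$. For each $x\in X$, evaluation $\ev_x\colon CX\to[0,1]$ is itself a morphism of $\Mnd(\FinSup{[0,1]})$ since all operations on $CX$ are defined pointwise, whence $\ev_x\cdot\Phi\colon CY\to[0,1]$ is also such a morphism. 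By the preliminary step there is a unique $f(x)\in Y$ with
\[
    \Phi(\psi)(x)=(\ev_x\cdot\Phi)(\psi)=\psi(f(x))
\]
for every $\psi\in CY$, defining a map $f\colon X\to Y$ satisfying $\psi\cdot f=\Phi(\psi)\in CX$ for every $\psi\in CY$.

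It remains to check that $f$ is continuous, after which $Cf=\Phi$ by construction. Since the cone $(\psi\colon Y\to[0,1])_\psi$ is point-separating and induces an initial source in $\COMPHAUS$ (equivalently, $Y$ embeds topologically into $[0,1]^{CY}$, which is the compact Hausdorff analogue of Proposition~\ref{prop:initial-cogenerator}), it suffices that each composite $\psi\cdot f$ is continuous, and this was already secured. The single non-formal ingredient is the observation that Hausdorffness collapses irreducible closed sets to singletons; everything else is the categorical extraction of an $\COMPHAUS$-morphism that has already been handled in the $\POSCH$ setting of Theorem~\ref{thm:full-embedding-2}.
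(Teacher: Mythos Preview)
Your proof is correct and follows precisely the approach the paper sets up: the theorem is stated in the paper without proof, relying on the immediately preceding Proposition (which gives $\Zero(\Phi)$ non-empty and irreducible) and Lemma (which gives $\Phi=\ev_{x_0}$), together with the general dual-adjunction machinery. You have simply made these steps explicit, including the routine passage from ``$\Phi:CX\to[0,1]$ is an evaluation'' to ``$\Phi:CY\to CX$ is $Cf$'' and the continuity check via the initial cogenerator $[0,1]$. The only minor deviation is that for the singleton conclusion you give a direct Hausdorff argument, whereas the paper has already recorded (after Definition~\ref{d:def:1}) that non-empty irreducible closed upper subsets are of the form $\upc x$ by sobriety, which for discretely ordered spaces reduces to $\{x\}$; both routes are equally valid.
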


\section{A Stone--Weierstra\ss{} theorem for $[0,1]$-categories}
\label{sec:stone-weierstrass-theorem}

In this section we adapt the classical Stone--Weierstra\ss{}
approximation theorem \citep{Sto48} to the context of
$[0,1]$-categories, which is an important step towards identifying the
image of the fully faithful functor
\[
  C:\POSCHDIST\longrightarrow\LaxMnd_\ominus(\FinSup{[0,1]})^\op.
\]
To do so, we continue working under
Assumption~\ref{ass:general-assumption}.

We recall that, for every partially ordered compact space $X$, the
$\V$-category $CX$ is finitely cocomplete with $[0,1]$-category
structure
\[
  d(\psi_1,\psi_2)=\inf_{x \in X}\hom(\psi_1(x), \psi_2(x)),
\]
for all $\psi_1,\psi_2 \in CX$. Furthermore, by
Theorem~\ref{thm:closure}, for all $M\subseteq CX$ and $\psi\in CX$ we
have $\psi\in\overline{M}$ if and only if, for every $u<1$, there is
some $\psi'\in M$ with $u\le d(\psi,\psi')$ and $u\le d(\psi',\psi)$.

\begin{lemma}
  \label{lem:stone}
  Let $L\subseteq CX$ be closed in $CX$ under finite suprema, the
  monoid structure and the action of $[0,1]$; that is, for all
  $\psi_1,\psi_2 \in L$ and $u \in [0,1]$, $\psi_1 \vee \psi_2 \in L$,
  $\psi_1 \otimes \psi_2 \in L$, $1\in L$ and
  $u \otimes \psi_1 \in L$. Let $\psi \in CX$. If the map
  $\hom:\im(\psi) \times [0,1] \to [0,1]$ is continuous and $L$
  satisfies the separation condition

  \begin{description}[font=\normalfont,labelindent=\parindent]
  \item [\namedlabel{Sep}{(Sep)}] for every $(x,y) \in X \times X$,
    with $x \ngeq y$, there exists $\psi \in L$ and an open
    neighbourhood $U_y$ of $y$ such that $\psi(x)=1$ and, for all
    $z \in U_y$, $\psi(z)=0$.
  \end{description}

  then $\psi \in \overline{L}$.
\end{lemma}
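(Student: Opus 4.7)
To show $\psi \in \overline{L}$, by Theorem~\ref{thm:closure} it suffices to find, for every $u \in [0,1)$, some $\psi' \in L$ satisfying $u \le d(\psi,\psi')$ and $u \le d(\psi',\psi)$, which unpacks to the pointwise double bound $u \otimes \psi(x) \le \psi'(x) \le \hom(u,\psi(x))$ for every $x \in X$. The strategy is Stone--Weierstra\ss-style: build local approximations $\psi_z \in L$ for each $z \in X$, each bounded above by $\hom(u,\psi)$ globally and above $u \otimes \psi$ on an open neighbourhood $W_z$ of $z$, and then combine via a finite supremum using compactness of $X$. Since $L$ is not closed under binary infimum, we substitute the monoid operation $\otimes$ (which satisfies $a \otimes b \le a \wedge b$): a $\otimes$-product of separators with value $1$ at $z$ still has value $1$ at $z$ and vanishes on the union of their zero-sets, which is all that is needed.

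Fix $u \in [0,1)$ and pick an auxiliary $u' \in (u,1)$. For each $z \in X$ with $v := \psi(z) > 0$, the set $K_z := \psi^{-1}[0, u' \otimes v]$ is a closed (hence compact) subset of $X$ not containing $z$; antitonicity of $\psi$ forces $z \ngeq y$ for every $y \in K_z$, so the separation condition \ref{Sep} supplies $s_{z,y} \in L$ with $s_{z,y}(z) = 1$ vanishing on an open neighbourhood $U_{z,y}$ of $y$. By compactness extract a finite subcover $U_{z,y_1}, \ldots, U_{z,y_n}$ and set $s_z := s_{z,y_1} \otimes \cdots \otimes s_{z,y_n} \in L$, which satisfies $s_z(z) = 1$ and $s_z \equiv 0$ on $K_z$. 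Define $\psi_z := v \otimes s_z \in L$, so that $\psi_z(z) = v$, $\psi_z \le v$ globally, and $\psi_z \equiv 0$ on $K_z$. The upper bound $\psi_z \le \hom(u,\psi)$ is verified pointwise: on $K_z$ it is trivial, and for $y \notin K_z$ the inequalities $u \le u'$, monotonicity of $\hom$, and the adjunction $u' \otimes (-) \dashv \hom(u',-)$ give $\hom(u,\psi(y)) \ge \hom(u',\psi(y)) \ge \hom(u',u' \otimes v) \ge v \ge \psi_z(y)$. For the lower bound, continuity of $s_z$ at $z$ with $s_z(z) = 1$ yields an open neighbourhood of $z$ on which $s_z \ge u'$ (so $\psi_z \ge u' \otimes v$), and shrinking further via continuity of $\psi$ and $\otimes$ we obtain an open $W_z \ni z$ on which $u \otimes \psi(y) \le u' \otimes v$, giving $\psi_z \ge u \otimes \psi$ on $W_z$. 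Degenerate points with $\psi(z) = 0$ are handled by taking $\psi_z := 0$; the continuity hypothesis on $\hom$ constrains $\im(\psi)$ away from points where $\hom$ is discontinuous, ensuring that $u \otimes \psi$ vanishes on a neighbourhood of such $z$.

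Finally, compactness of $X$ yields a finite subcover $W_{z_1}, \ldots, W_{z_m}$, and we set $\psi' := \psi_{z_1} \vee \cdots \vee \psi_{z_m} \in L$ using closure under finite suprema. The finite supremum preserves the upper bound, so $\psi' \le \hom(u,\psi)$; and for each $x \in X$, choosing $i$ with $x \in W_{z_i}$ yields $\psi'(x) \ge \psi_{z_i}(x) \ge u \otimes \psi(x)$. Hence $u \otimes \psi \le \psi' \le \hom(u,\psi)$, as required. The main obstacle is the simultaneous control of both bounds via the choices of $u'$ and $W_z$, especially near points where $\psi$ is small: this is where the continuity of $\hom$ on the compact product $\im(\psi) \times [0,1]$ enters, providing the uniform estimates needed to calibrate the auxiliary parameter and the local neighbourhoods coherently. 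The second key ingredient is the substitution of $\otimes$ for $\wedge$ in the combining step, which is harmless because we only need approximation up to the factor $u$.
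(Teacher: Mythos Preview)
Your approach mirrors the paper's two-stage Stone--Weierstra\ss{} construction: build for each base point a local approximant in $L$ that agrees with $\psi$ there and is bounded above by $\hom(u,\psi)$ globally, then take a finite supremum via compactness. Your $\psi_z=v\otimes s_z$ is a minor variant of the paper's $\phi_x$ (you separate $z$ from the compact set $K_z=\psi^{-1}[0,u'\otimes v]$, the paper separates $x$ from all $y\nleq x$), and your verification of the global upper bound $\psi_z\le\hom(u,\psi)$ is correct.

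The gap is in obtaining the open neighbourhood $W_z$ on which $u\otimes\psi\le\psi_z$. You assert this ``via continuity of $\psi$ and $\otimes$'', evidently intending the open set $\{y:u\otimes\psi(y)<u'\otimes v\}$ to contain $z$; but this requires $u\otimes v<u'\otimes v$, which fails whenever $-\otimes v$ is constant on $[u,u']$---for instance with $\otimes=\luk$, $v=0.2$, $u=0.5$, $u'=0.7$ one has $u\luk v=u'\luk v=0$. It is not continuity of $\otimes$ but the lemma's explicit hypothesis---continuity of $\hom$ on $\im(\psi)\times[0,1]$---that does the work here. Since your $\psi_z$ satisfies $\psi_z(z)=\psi(z)$, one has $\hom(\psi(z),\psi_z(z))=1>u$, and the hypothesis makes $W_z:=\{y:\hom(\psi(y),\psi_z(y))>u\}$ open; this is exactly how the paper defines its $V_x$, and it repairs your argument immediately. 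Your final paragraph gestures at the role of $\hom$-continuity, but the body of the proof never invokes it, and the detour through the auxiliary $u'$ cannot close the gap on its own. (The degenerate case $\psi(z)=0$ is similarly rescued by continuity of $\hom$ at $(0,0)\in\im(\psi)\times[0,1]$, which forces $\hom(a,0)\to 1$ as $a\to 0^+$ and hence $u\otimes\psi(y)=0$ for $\psi(y)$ small; your remark there is correct in spirit but too terse to stand as written.)
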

\begin{proof}
  Fix $x \in X$. Let $(\psi_y)_{y \in X}$ be the family of functions
  defined in the following way.
  \begin{itemize}
  \item If $y \nleq x$, then let $\psi_y$ be a function guaranteed
    by~\ref{Sep} and $U_y$ the corresponding neighborhood.
  \item If $y \leq x$, then $\psi_y$ is the constant function function
    $\psi(x)$.
  \end{itemize}

  By hypothesis, the functions $\hom(\psi(x),-):[0,1] \to [0,1]$ and
  $\psi$ are continuous. Thus, the set
  \[
    U_x=\{z \in X \mid u < \hom(\psi(x),\psi(z)) \}
  \]
  is an open neighborhood of every $y \leq x$, and for such $y \in X$
  we put $U_y=U_x$. Consenquently, the collection of sets $U_y$
  ($y \in X$) is an open cover of $X$. By compactness of $X$, there
  exists a finite subcover $U_{y_1}, \dots , U_{y_n}, U_x$ of
  $X$. Considering the corresponding functions
  $\psi_{y_1}, \dots , \psi_{y_n}, \psi_x$, we define
  $\phi_x=\psi_{y_1} \otimes \dots \otimes \psi_{y_n} \otimes \psi_x$.

  By construction, $\phi_x$ has the following properties:
  \begin{itemize}
  \item $\phi_x(x)=\psi(x)$, since $\psi_{y_i}(x)=1$ for
    $1 \leq i \leq n$ and $\psi_x(x)=\psi(x)$;
  \item for every $z \in X$, $u \otimes \phi_x(z) \leq \psi(z)$, since
    $z\in U_x$ or $z\in U_{y_i}$, for some $i$.
  \end{itemize}

  Now, for every $x \in X$ the set
  \[
    V_x=\{z \in X \mid u < \hom(\psi(z),\phi_x(z)) \}
  \]
  is open because the functions
  $\hom:\im(\psi) \times [0,1] \to [0,1]$, $\phi_x$ and $\psi$ are
  continuous. Therefore the collection of the sets $V_x$ is an open
  cover of $X$. Again, by compactness of $X$, there exists a finite
  subcover $V_{x_1}, \dots , V_{x_m}$ of $X$. By defining
  $\phi=\phi_{x_1} \vee \dots \vee \phi_{x_m}$ we obtain a function in
  $L$ such that for every $z \in X$:
  \begin{itemize}
  \item
    $\displaystyle{u \otimes \phi (z) = \bigvee_{j=1}^m u \otimes
      \phi_{x_j}(z) \leq \bigvee_{j=1}^m u \otimes \psi_{y_i}(z)}$,
    for some $\psi_{y_i}$ such that
    $u \otimes \psi_{y_i} \leq \psi(z)$;
  \item $u \otimes \psi(z) \leq \phi(z)$.\qedhere
  \end{itemize}
\end{proof}

\begin{remark}
  For the \L{}ukasiewicz tensor, the lemma above affirms that $L$ is
  dense in $CX$ in the usual sense since, in this case,
  \[
    \hom(u,v)\geq 1-\varepsilon \iff \max(v-u,0)\leq \varepsilon,
  \]
  for all $u,v\in [0,1]$. However, if the tensor product is
  multiplication, the function $\hom:[0,1]\times [0,1]\to [0,1]$ is
  not continuous in $(0,0)$; which requires us to add a further
  condition involving truncated minus (see
  Lemma~\ref{d:lem:1}). Finally, if the tensor is the infimum, we
  cannot expect to obtain a useful Stone--Weierstra\ss{} theorem using
  this closure. For example, for the partially ordered compact space
  $1=\{*\}$ the topology in $CX\simeq [0,1]$ is generated by the sets
  $\{u\}$ and $]u,1]$ with $u\neq 1$. For $x\neq 1$ and
  $M\subseteq [0,1]$, this means that the seemingly weaker condition
  $x\in \overline{M}$ implies that $x\in M$.
\end{remark}

\begin{lemma}\label{d:lem:2}
  Let $\otimes = \luk$ and $L \subseteq CX$. Assume that $L$ is closed
  in $CX$ under the monoid structure and $u$-powers, for all
  $u \in [0,1]$, and that the cone $(f:X\to [0,1]^\op)_{f \in L}$ is
  initial; that is, for all $x,y \in X$, $x \geq y$ if and only if,
  for all $\psi \in L$, $\psi(x) \leq \psi(y)$. Then $L$
  satisfies~\ref{Sep}.
\end{lemma}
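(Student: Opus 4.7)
The plan is to exploit initiality to obtain a single separating function, then to sharpen it into the required bump function by using the two operations on $L$: a $u$-power will raise the value at $x$ to $1$ while keeping the value at $y$ strictly below $1$, after which iterated \L{}ukasiewicz multiplication will force the function to vanish on a whole neighbourhood of $y$.

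First, I would apply initiality to the pair $x \ngeq y$: this yields $\psi \in L$ with $\psi(x) > \psi(y)$. Write $a = \psi(x)$ and $b = \psi(y)$, so $a - b > 0$.

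Next, I would pass to $\psi_1 := \psi \pitchfork a$, which lies in $L$ by closure under $u$-powers. Recalling that in the \L{}ukasiewicz quantale $\hom(u,v) = \min(1,\,1-u+v)$ and that $u$-powers in $CX$ are computed pointwise, one has $\psi_1(z) = \min(1,\,1-a+\psi(z))$; in particular $\psi_1(x) = 1$ and $\psi_1(y) = 1 - (a-b) < 1$. I would then pick $n \in \N$ with $n(a-b) > 1$ and form $\psi_2 := \psi_1^{n}$, the $n$-fold $\luk$-product, which again lies in $L$ by closure under the monoid structure. A short direct computation shows that, for $z$ with $\psi(z) \leq a$, $\psi_2(z) = \max(0,\,1 - n(a - \psi(z)))$, so $\psi_2(z) = 0$ whenever $\psi(z) \leq a - 1/n$, while $\psi_2(x) = 1$.

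Finally, to produce an \emph{open} neighbourhood rather than just a zero of $\psi_2$ at $y$, I would choose $\epsilon > 0$ with $b + \epsilon < a - 1/n$ (possible by our choice of $n$) and set $U_y := \{z \in X : \psi(z) < b + \epsilon\}$. Continuity of $\psi \colon X \to [0,1]^\op$ (whose underlying topology is Euclidean) makes $U_y$ open; it contains $y$ and is contained in $\Zero(\psi_2)$. The pair $(\psi_2, U_y)$ then witnesses~\ref{Sep}. The only delicate step is the bookkeeping that ties $n$ and $\epsilon$ together so as to make $U_y$ simultaneously open and contained in $\Zero(\psi_2)$, but once the explicit formulas for $\pitchfork$ and $\luk$ are in hand this reduces to an elementary estimate.
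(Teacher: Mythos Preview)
Your proof is correct and follows essentially the same approach as the paper: both use initiality to separate $x$ from $y$, apply a $u$-power with $u=\psi(x)$ to lift the value at $x$ to $1$, and then iterate the \L{}ukasiewicz product to annihilate the function on an open neighbourhood of $y$ defined as a sublevel set of $\psi$. Your version is in fact more explicit with the arithmetic (working directly with the formulas for $\hom$ and $\luk$ rather than invoking nilpotency via Corollary~\ref{cor:no-zero-div}), and your choice of $n$ and $\epsilon$ makes the inclusion $U_y\subseteq\Zero(\psi_2)$ transparent.
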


\begin{proof}
  Let $(x,y) \in X \times X$ with $x \ngeq y$. By hyphotesis, there
  exists $\psi \in L$ and $c \in [0,1]$ such that
  $\psi(x) > c > \psi(y)$. Let $u=\psi(x)$. Since $L$ is closed for
  $u$-powers then $\psi'=\hom(u,\psi) \in L$. By
  corollary~\ref{cor:no-zero-div} there exists $n \in \N$ such that
  $c^n=0$. Therefore $\psi'^n(x)=1$ and for all
  $z \in U_y=\psi^{-1}[0,c[$, $\psi'^n(z)=0$.
\end{proof}

\begin{lemma}\label{d:lem:1}
  Let $\otimes = *$ and $L \subseteq CX$. Assume that $L$ is
  closed in $CX$ under $u$-powers and $- \ominus u$, for all
  $u \in [0,1]$, and that the cone $(f:X\to [0,1]^\op)_{f \in L}$ is
  initial. Then $L$ satisfies~\ref{Sep}.
\end{lemma}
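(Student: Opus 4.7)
The plan is to mimic the structure of the proof of Lemma~\ref{d:lem:2}, using the initiality of the cone to obtain a separating function and then manipulating it into the precise shape required by~\ref{Sep}. The key difference from the \L{}ukasiewicz case is that when $\otimes = *$ the quantale $[0,1]$ has no nilpotent elements by Corollary~\ref{cor:no-zero-div}, so raising to a power alone cannot drive strictly positive values down to $0$. The role played there by $v \mapsto v^n$ will here be played by the operation $-\ominus c$, which truncates values $\le c$ to $0$; the $u$-power operation is retained only to rescale the value at $x$ back up to $1$.

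Concretely, given $(x,y) \in X \times X$ with $x \ngeq y$, I would invoke initiality of the cone $(f\colon X\to [0,1]^\op)_{f \in L}$ to produce some $\psi_0 \in L$ with $\psi_0(x) > \psi_0(y)$, choose $c \in [0,1]$ with $\psi_0(x) > c > \psi_0(y)$, and set $u := \psi_0(x) - c > 0$. The candidate separating function is then
\[
  \psi := (\psi_0 \ominus c) \pitchfork u,
\]
which lies in $L$ by the hypothesis that $L$ is closed under $-\ominus c$ and under $u$-powers.

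Verification is a direct computation. The set $U_y := \psi_0^{-1}([0,c[)$ is open in $X$, since $\psi_0$ is continuous into $[0,1]$ with the Euclidean topology, and contains $y$ because $\psi_0(y) < c$. For $z \in U_y$ one has $\psi_0(z) < c$, hence $(\psi_0 \ominus c)(z) = 0$, and therefore $\psi(z) = \hom(u,0) = \min(0/u,1) = 0$; at $x$ we have $(\psi_0 \ominus c)(x) = u$, so $\psi(x) = \hom(u,u) = 1$. Hence $\psi$ together with $U_y$ witness~\ref{Sep}.

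The only subtlety I foresee is the bookkeeping that both $\psi_0 \ominus c$ and its $u$-power remain objects of $CX$, i.e., continuous and monotone maps $X \to [0,1]^\op$; but this is immediate from the continuity and monotonicity of $-\ominus c$ and $\hom(u,-)$ on $[0,1]$, which are available precisely because $u > 0$.
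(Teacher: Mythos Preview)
Your proof is correct and follows essentially the same approach as the paper's: use initiality to get $\psi_0$ with $\psi_0(x)>c>\psi_0(y)$, apply $-\ominus c$ to zero out a neighbourhood of $y$, then apply the $u$-power with $u=\psi_0(x)-c$ to rescale the value at $x$ to $1$. Your choice $U_y=\psi_0^{-1}([0,c[)$ is in fact cleaner than the paper's printed $U_y={\psi'}^{-1}[0,c[$, which appears to be a typo (one needs $\psi'(z)=0$, not merely $\psi'(z)<c$, to conclude $\psi''(z)=0$ when $\otimes=*$).
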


\begin{proof}
  Let $(x,y) \in X \times X$ with $x \ngeq y$. By hyphotesis, there
  exists $\psi \in L$ and $c \in [0,1]$ such that
  $\psi(x) > c > \psi(y)$. Let $\psi'=\psi \ominus c$ and
  $u=\psi'(x)$. Let $\psi''=\hom(u, \psi') \in L$ and
  $U_y=\psi'^{-1}[0,c[$. Clearly, $\psi''(x)=1$ and, since $u > 0$,
  for all $z \in U_y$ we obtain $\psi''(z)=0$.
\end{proof}

The results above tell us that certain $[0,1]$-subcategories of $CX$
are actually equal to $CX$ if they are closed in $CX$. To ensure this
property, we will work now with Cauchy-complete
$[0,1]$-categories. First we have to make sure that the
$[0,1]$-category $CX$ is Cauchy-complete.

\begin{lemma}
  The subset
  \[
    \{(u,v)\mid u\le v\}\subseteq [0,1]\times [0,1]
  \]
  of the $[0,1]$-category $[0,1]\times[0,1]$ is closed.
\end{lemma}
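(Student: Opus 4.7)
The plan is to apply Theorem~\ref{thm:closure}(1) directly, without invoking the totally-below characterization. Writing $M=\{(u,v)\mid u\le v\}$ and using the pointwise product structure $a((u_1,v_1),(u_2,v_2))=\hom(u_1,u_2)\wedge\hom(v_1,v_2)$ on the $[0,1]$-category $[0,1]\times[0,1]$, I want to show that for any $(u_0,v_0)\notin M$ the supremum
\[
  \bigvee_{(a,b)\in M} a\bigl((u_0,v_0),(a,b)\bigr)\otimes a\bigl((a,b),(u_0,v_0)\bigr)
\]
is strictly below $k=1$, whence $(u_0,v_0)\notin\overline M$.

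The key reduction is to bound each term of the supremum by a single quantity depending only on $u_0$ and $v_0$. Fix $(a,b)\in M$, so $a\le b$. Dropping one factor in each meet and using monotonicity of $\otimes$ gives
\[
  a\bigl((u_0,v_0),(a,b)\bigr)\otimes a\bigl((a,b),(u_0,v_0)\bigr)\le \hom(u_0,a)\otimes\hom(b,v_0).
\]
Next, $a\le b$ yields $\hom(b,v_0)\le\hom(a,v_0)$ by antitonicity of $\hom(-,v_0)$, and then transitivity of $\hom$ as the $[0,1]$-category structure on $[0,1]$ gives $\hom(u_0,a)\otimes\hom(a,v_0)\le\hom(u_0,v_0)$. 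Thus the entire supremum is bounded above by $\hom(u_0,v_0)$.

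To finish, I observe that $\hom(u_0,v_0)<1$ whenever $u_0>v_0$: indeed $1\le\hom(u_0,v_0)$ is equivalent, by adjunction, to $u_0=u_0\otimes 1\le v_0$. Since $(u_0,v_0)\notin M$ means precisely $u_0>v_0$, I conclude $(u_0,v_0)\notin\overline M$, as required. The only delicate point is choosing which of the four $\hom$-factors to retain when estimating each term; once one sees that $\hom(u_0,a)\otimes\hom(a,v_0)$ collapses to $\hom(u_0,v_0)$ by transitivity, everything falls out with no appeal to continuity of the t-norm or to the totally-below characterization in Theorem~\ref{thm:closure}(1).
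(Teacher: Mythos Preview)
Your argument is correct. The bound
\[
  \bigl(\hom(u_0,a)\wedge\hom(v_0,b)\bigr)\otimes\bigl(\hom(a,u_0)\wedge\hom(b,v_0)\bigr)
  \le \hom(u_0,a)\otimes\hom(b,v_0)\le\hom(u_0,a)\otimes\hom(a,v_0)\le\hom(u_0,v_0)
\]
is valid for every $(a,b)$ with $a\le b$, and $\hom(u_0,v_0)<1$ whenever $u_0>v_0$, so Theorem~\ref{thm:closure}(1) gives the conclusion.

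The paper, however, takes a different and more structural route: it simply observes that $M=\{(u,v)\mid u\le v\}$ is the equaliser of the $[0,1]$-functors $\wedge,\pi_1:[0,1]\times[0,1]\to[0,1]$, since $u\wedge v=u$ precisely when $u\le v$. Closedness then follows from parts (2)--(4) of Theorem~\ref{thm:closure}: $[0,1]$ is Cauchy-complete, Cauchy-completeness is preserved under limits (hence under products and equalisers), and in a separated Cauchy-complete $\V$-category the closed subsets are exactly the Cauchy-complete ones. Your direct computation avoids this machinery and uses only the explicit closure formula, which makes it more self-contained. The paper's equaliser argument, on the other hand, is the template for the two corollaries that follow (closedness of $\ORD(X,[0,1]^\op)$ and of $\COMPHAUS(X,[0,1])$ inside suitable powers of $[0,1]$), where a direct estimate would be less convenient; so the abstract route pays for itself immediately afterward.
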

\begin{proof}
  Just observe that $\{(u,v)\mid u\le v\}$ can be presented as the
  equaliser of the $[0,1]$-functors $\wedge:[0,1]\times[0,1]\to[0,1]$
  and $\pi_1:[0,1]\times[0,1]\to[0,1]$.
\end{proof}

\begin{corollary}
  For every ordered set $X$ (with underlying set $|X|$), the subset
  \[
    \ORD(X,[0,1]^\op)\subseteq [0,1]^{|X|}
  \]
  of the $[0,1]$-category $[0,1]^{|X|}$ is closed.
\end{corollary}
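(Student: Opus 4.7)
The plan is to exhibit $\ORD(X,[0,1]^\op)$ as an intersection of preimages of the closed subset produced in the preceding lemma, and then invoke routine stability properties of the closure operator on $\Cats{[0,1]}$ provided by Theorem~\ref{thm:closure}.

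First I would observe that $\psi\in[0,1]^{|X|}$ lies in $\ORD(X,[0,1]^\op)$ exactly when, for every pair $(x,y)$ with $x\le y$ in $X$, one has $\psi(y)\le\psi(x)$, i.e.\ $(\psi(y),\psi(x))\in\{(u,v)\mid u\le v\}$. For each such pair, the map
\[
\rho_{x,y}\colon [0,1]^{|X|}\longrightarrow [0,1]\times[0,1],\quad \psi\longmapsto(\psi(y),\psi(x))
\]
is a $[0,1]$-functor, since the projections $\pi_x\colon[0,1]^{|X|}\to[0,1]$ are. Writing $M_{x,y}:=\rho_{x,y}^{-1}\{(u,v)\mid u\le v\}$, we obtain
\[
\ORD(X,[0,1]^\op)=\bigcap_{x\le y}M_{x,y}.
\]

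The proof then reduces to two elementary stability facts: preimages of closed subsets under $[0,1]$-functors are closed, and arbitrary intersections of closed subsets are closed. Both are immediate from the explicit formula in Theorem~\ref{thm:closure}(1): for a $[0,1]$-functor $f\colon (Y,b)\to (Z,c)$ and a closed $N\subseteq Z$, any inequality $k\le\bigvee_{z\in f^{-1}(N)}b(y,z)\otimes b(z,y)$ transports along $b\le c\circ(f\times f)$ to $k\le\bigvee_{w\in N}c(f(y),w)\otimes c(w,f(y))$, forcing $f(y)\in\overline{N}=N$; for intersections, a witness drawn from $\bigcap_i M_i$ is a witness for each $M_i$ separately. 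Equivalently, and perhaps more conceptually, one may note that $[0,1]^{|X|}$ is Cauchy-complete and separated (as a power of $[0,1]$), that $\ORD(X,[0,1]^\op)$ arises as a limit of Cauchy-complete $[0,1]$-categories by Theorem~\ref{thm:closure}(2)--(3), and conclude via Theorem~\ref{thm:closure}(4).

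I do not anticipate a substantive obstacle; the content is purely formal, and the only real step is recognising that the elementary closure-stability facts above, while not isolated as a separate statement in the paper, are immediate from the closure formula in Theorem~\ref{thm:closure}(1).
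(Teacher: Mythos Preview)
Your proposal is correct and matches the paper's intended argument: the corollary is left without proof precisely because it follows from the lemma by the routine observation that $\ORD(X,[0,1]^\op)$ is an intersection of preimages of $\{(u,v)\mid u\le v\}$ under the projection $[0,1]$-functors $\psi\mapsto(\psi(y),\psi(x))$. Both of your justifications---the direct one via the closure formula of Theorem~\ref{thm:closure}(1), and the more conceptual one via Cauchy-completeness and Theorem~\ref{thm:closure}(2)--(4)---are valid, and the latter is exactly the mechanism implicit in the paper's proof of the preceding lemma (where ``equaliser'' is doing the work of ``limit of Cauchy-complete $[0,1]$-categories, hence closed'').
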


Recall that we write $\xi:U[0,1]\to[0,1]$ for the convergence of the
Euclidean topology of $[0,1]$.
\begin{lemma}
  For every set $X$ and every ultrafilter $\fx$ on $X$, the map
  \[
    \Phi_\fx:[0,1]^X\to[0,1],\,\psi\mapsto\xi\cdot U\psi(\fx)
  \]
  is a $[0,1]$-functor.
\end{lemma}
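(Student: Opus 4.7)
The plan is to unpack what being a $[0,1]$-functor means and reduce everything to two facts: continuity of the tensor $\otimes$ (Assumption~\ref{ass:general-assumption}) and topological closedness of the relation $\le$ on $[0,1]\times[0,1]$. Concretely, we need to show, for all $\psi_1,\psi_2\in[0,1]^X$,
\[
  [\psi_1,\psi_2]\le\hom(\Phi_\fx(\psi_1),\Phi_\fx(\psi_2)),
\]
which by adjunction (setting $u:=[\psi_1,\psi_2]=\bigwedge_{x\in X}\hom(\psi_1(x),\psi_2(x))$) is equivalent to
\[
  u\otimes\Phi_\fx(\psi_1)\le\Phi_\fx(\psi_2).
\]
From the definition of $u$ one immediately gets $u\otimes\psi_1(x)\le\psi_2(x)$ for every $x\in X$; the whole question is how to push this pointwise inequality through the ultrafilter and through $\xi$.

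The first step handles the left-hand side. Since $\otimes:[0,1]\times[0,1]\to[0,1]$ is jointly continuous, the map $u\otimes-:[0,1]\to[0,1]$ is continuous as a morphism in $\COMPHAUS$, hence commutes with the ultrafilter convergence: for the ultrafilter $U\psi_1(\fx)$ on $[0,1]$,
\[
  u\otimes\Phi_\fx(\psi_1)=u\otimes\xi(U\psi_1(\fx))=\xi\bigl(U(u\otimes-)(U\psi_1(\fx))\bigr)=\xi\bigl(U(u\otimes\psi_1)(\fx)\bigr).
\]
Thus the goal becomes $\xi(U(u\otimes\psi_1)(\fx))\le\xi(U\psi_2(\fx))$.

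The second step handles the inequality. Consider the map $h:X\to[0,1]\times[0,1]$, $x\mapsto(u\otimes\psi_1(x),\psi_2(x))$; by the pointwise bound, $h$ factors through the subset $D:=\{(a,b)\in[0,1]^2\mid a\le b\}$. The key point is that $D$ is topologically closed in $[0,1]\times[0,1]$ with the Euclidean topology (a standard fact, matching the equaliser presentation already exploited in the preceding lemma). The ultrafilter $Uh(\fx)=\bigl(U(u\otimes\psi_1)(\fx),U\psi_2(\fx)\bigr)$ on $[0,1]\times[0,1]$ is concentrated on $D$, so its $\xi\times\xi$-limit
\[
  \bigl(\xi(U(u\otimes\psi_1)(\fx)),\,\xi(U\psi_2(\fx))\bigr)
\]
lies in $D$, giving exactly the desired inequality. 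Combining the two steps yields $u\otimes\Phi_\fx(\psi_1)\le\Phi_\fx(\psi_2)$.

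The routine part is the rewrite via adjunction and continuity; the one conceptual step to get right is the transfer of a pointwise order relation across $\xi$, which is really a closure argument in $\COMPHAUS$ rather than in $\Cats{[0,1]}$. This is the only genuine obstacle, and it is dispatched as soon as one notices that $D$ is Euclidean-closed and that convergence in a product is coordinatewise.
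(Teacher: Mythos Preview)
Your proof is correct. The paper's argument is organised a little differently: since both $[0,1]^X$ and $[0,1]$ are $[0,1]$-copowered, it observes that it suffices to check that $\Phi_\fx$ is monotone ($\psi\le\psi'$ implies $\xi\cdot U\psi(\fx)\le\xi\cdot U\psi'(\fx)$) and preserves copowers ($\xi\cdot U(\psi\otimes u)(\fx)=(\xi\cdot U\psi(\fx))\otimes u$), because any monotone copower-preserving map between copowered $\V$-categories is automatically a $\V$-functor. Your two steps establish precisely these two ingredients --- Step~1 is copower preservation, Step~2 is monotonicity applied to the specific pair $u\otimes\psi_1\le\psi_2$ --- so the underlying mathematics is the same; you have simply inlined the general principle rather than invoking it. Your version is more self-contained and makes explicit \emph{why} each ingredient holds (continuity of $u\otimes-$ as a $\COMPHAUS$-morphism, Euclidean closedness of the order relation), whereas the paper's version isolates a reusable characterisation of $\V$-functors between copowered $\V$-categories.
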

\begin{proof}
  Since domain and codomain of $\Phi_\fx$ are both $\V$-copowered, the
  assertion follows from
  \begin{align*}
    \xi\cdot U\psi\le\xi\cdot U\psi' &&\text{and}&& \xi\cdot U(\psi\otimes u)=(\xi\cdot U\psi)\otimes u,
  \end{align*}
  for all $u\in[0,1]$ and $\psi,\psi'\in [0,1]^X$ with $\psi\le\psi'$.
\end{proof}

\begin{corollary}
  For every compact Hausdorff space $X$, the subset
  \[
    \COMPHAUS(X,[0,1])\subseteq [0,1]^{|X|}
  \]
  of the $[0,1]$-category $[0,1]^{|X|}$ is closed.
\end{corollary}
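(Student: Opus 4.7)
The plan is to realise $\COMPHAUS(X,[0,1])$ as an intersection of equalisers of $[0,1]$-functors and then deduce closedness from Theorem~\ref{thm:closure}. The preceding lemma on the $[0,1]$-functoriality of $\Phi_\fx$ is precisely the ingredient that makes this work for compact Hausdorff spaces, mirroring how the earlier lemma about the closedness of $\{(u,v)\mid u\le v\}$ handles the ordered case.

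Since $X$ is compact and Hausdorff, every ultrafilter $\fx\in UX$ has a unique limit $\lim\fx\in X$, and a map $\psi\colon X\to[0,1]$ is continuous if and only if $\xi\cdot U\psi(\fx)=\psi(\lim\fx)$ for every $\fx\in UX$. With $\Phi_\fx\colon[0,1]^{|X|}\to[0,1]$ as in the preceding lemma and $\ev_x\colon[0,1]^{|X|}\to[0,1]$ denoting the ($[0,1]$-functorial) projection at $x\in X$, this condition is exactly $\Phi_\fx(\psi)=\ev_{\lim\fx}(\psi)$. Because the forgetful functor $\Cats{[0,1]}\to\SET$ preserves limits, the set of $\psi$ satisfying this equality coincides with the equaliser $E_\fx$ of $\Phi_\fx$ and $\ev_{\lim\fx}$ computed in $\Cats{[0,1]}$, so
\[
  \COMPHAUS(X,[0,1])=\bigcap_{\fx\in UX}E_\fx.
\]

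It remains to show that each $E_\fx$ is closed in $[0,1]^{|X|}$, after which closure under arbitrary intersections (a formal property of $\overline{(-)}$) completes the argument. By Theorem~\ref{thm:closure}(2) the $[0,1]$-category $[0,1]$ is Cauchy-complete, and by (3) the Cauchy-complete $[0,1]$-categories form a limit-closed subcategory of $\Cats{[0,1]}$; hence $[0,1]^{|X|}$ is Cauchy-complete, and so is $E_\fx$ as a limit (equaliser) of Cauchy-complete $[0,1]$-categories. Since $[0,1]^{|X|}$ is moreover separated, Theorem~\ref{thm:closure}(4) applies and yields that $E_\fx\subseteq[0,1]^{|X|}$ is closed.

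I do not foresee a genuine obstacle: the whole argument is formal once the ultrafilter-convergence description of continuous functions is matched with the $[0,1]$-functor $\Phi_\fx$ from the preceding lemma. The only small points to keep in mind are that equalisers in $\Cats{[0,1]}$ have the expected underlying set (automatic from limit-preservation of the forgetful functor to $\SET$) and that $[0,1]^{|X|}$ is separated, both of which are recorded in Section~\ref{sec:enriched-cats-act}.
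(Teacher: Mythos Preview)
Your proof is correct and follows the same approach as the paper: express $\COMPHAUS(X,[0,1])$ as the intersection over all $\fx\in UX$ of the equaliser of $\Phi_\fx$ and the projection at the unique limit of $\fx$. The paper's proof is a single sentence that only names this equaliser description, whereas you additionally spell out why equalisers of $[0,1]$-functors are closed via Theorem~\ref{thm:closure}; this extra detail is sound and in the intended spirit.
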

\begin{proof}
  For an ultrafilter $\fx\in UX$ with convergence point $x\in X$, a
  map $\psi:X\to[0,1]$ preserves this convergence if and only if
  $\psi$ belongs to the equaliser of $\Phi_\fx$ and $\pi_x$.
\end{proof}

\begin{proposition}
  For every partially ordered compact space $X$, the $[0,1]$-category
  $CX$ is Cauchy-complete.
\end{proposition}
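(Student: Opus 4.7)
The plan is to realise $CX$ as a closed $[0,1]$-subcategory of a Cauchy-complete and separated $[0,1]$-category, and then invoke Theorem~\ref{thm:closure}(4) directly.

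First, I would establish that the power $[0,1]^{|X|}$ is Cauchy-complete and separated. Cauchy-completeness follows by combining parts~(2) and~(3) of Theorem~\ref{thm:closure}: the $[0,1]$-category $[0,1]$ is Cauchy-complete, powers are limits in $\Cats{[0,1]}$, and Cauchy-complete $[0,1]$-categories are closed under limits. Separation is immediate since the underlying order of $[0,1]^{|X|}$ is computed pointwise from the (anti-symmetric) order on $[0,1]$.

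Next, I would identify $CX$ with the set-theoretic intersection
\[
\POSCH(X,[0,1]^\op) \;=\; \ORD(X,[0,1]^\op)\,\cap\,\COMPHAUS(X,[0,1])
\]
sitting inside $[0,1]^{|X|}$; indeed, since $[0,1]$ and $[0,1]^\op$ carry the same (Euclidean) topology, a map $X\to[0,1]^\op$ is a $\POSCH$-morphism precisely when it is both monotone with respect to the given order and continuous with respect to the compact Hausdorff topology. The two corollaries immediately preceding the statement show that each factor of this intersection is a closed subset of the $[0,1]$-category $[0,1]^{|X|}$, and closedness (in the sense of the closure operator $\overline{(-)}$) is plainly preserved by finite intersections, since membership in $\overline{M}$ is characterised in Theorem~\ref{thm:closure}(1) by a condition on $M$ that is monotone in $M$.

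Finally, Theorem~\ref{thm:closure}(4) tells us that a $[0,1]$-subcategory of a Cauchy-complete separated $[0,1]$-category is Cauchy-complete if and only if its underlying set is closed. Applying this to the closed subset $CX\subseteq [0,1]^{|X|}$ gives the result. No real obstacle arises: the entire proof is a bookkeeping exercise once the two preparatory corollaries are in hand, the only point worth emphasising being that $CX$ inherits its $[0,1]$-categorical structure from $[0,1]^{|X|}$, so that ``closed'' in the two corollaries is exactly the notion needed for Theorem~\ref{thm:closure}(4).
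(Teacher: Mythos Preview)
Your proposal is correct and follows exactly the approach the paper intends: the proposition is stated without proof precisely because it is meant to follow immediately from the two preceding corollaries together with Theorem~\ref{thm:closure}, and you have accurately unpacked those details. The only thing the paper leaves implicit that you spell out is the identification $CX=\ORD(X,[0,1]^\op)\cap\COMPHAUS(X,[0,1])$ and the stability of closedness under intersections, both of which you handle correctly.
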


We will now introduce a category $\catA$ of $[0,1]$-categories which
depends on the chosen tensor $\otimes$ on $[0,1]$.

\begin{description}
\item[For the \L{}ukasiewicz tensor $\otimes=\luk$] $\catA$ is the
  category with objects all $[0,1]$-powered objects in the category
  $\Mnd(\FinSup{[0,1]})$, and morphisms all those arrows in
  $\Mnd(\FinSup{[0,1]})$ which preserve powers by elements of $[0,1]$.
\item[For the multiplication $\otimes=*$] $\catA$ is the category with
  objects all $[0,1]$-powered objects in the category
  $\Mnd_\ominus(\FinSup{[0,1]})$, and morphisms all those arrows in
  $\Mnd_\ominus(\FinSup{[0,1]})$ which preserve powers by elements of
  $[0,1]$.
\end{description}

\begin{remark}
  The category $\catA$ over $\SET$ is $\aleph_1$-ary quasivariety and,
  moreover, a full subcategory of a finitary variety. Therefore the
  isomorphisms in $\catA$ are precisely the bijective morphisms.
\end{remark}

\begin{theorem}\label{thm.stone-weierstrass}
  Assume that $\otimes=*$ is multiplication or $\otimes=\luk$ is
  the \L{}ukasiewicz tensor. Let $m:A\to CX$ be an injective morphism
  in $\catA$ so that the cone $(m(a):X\to[0,1]^\op)_{a\in A}$ is
  point-separating and initial with respect to the canonical forgetful
  functor $\POSCH\to\SET$. Then $m$ is an isomorphism in $\catA$ if
  and only if $A$ is Cauchy-complete.
\end{theorem}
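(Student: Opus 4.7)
The ``only if'' direction is immediate: we have just shown that $CX$ is Cauchy-complete, and Cauchy-completeness is invariant under isomorphism of $[0,1]$-categories.

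For the ``if'' direction, the plan is to identify $A$ with its image $L:=m(A)\subseteq CX$ (using injectivity of $m$) and show that $L$ is simultaneously dense and closed in $CX$, hence equal to $CX$. Once this is established, $m$ is surjective and therefore bijective, and by the remark preceding the theorem, every bijective morphism in $\catA$ is an isomorphism.

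For density, I would argue as follows. Since $m$ is a morphism in $\catA$, $L$ is closed under finite suprema, the monoid operations $(1,\otimes)$, the action $u\otimes-$ for every $u\in[0,1]$, and powers $-\pitchfork u$ by elements of $[0,1]$; in the multiplication case it is moreover closed under $-\ominus u$. The point-separating and initial cone hypothesis on $(m(a))_{a\in A}$ translates directly into the assumption of Lemma~\ref{d:lem:2} (for $\otimes=\luk$) or Lemma~\ref{d:lem:1} (for $\otimes=*$), so $L$ satisfies the separation property \ref{Sep}. Lemma~\ref{lem:stone} then yields $\psi\in\overline{L}$ for every $\psi\in CX$ whose image allows us to apply it, i.e.\ $\overline{L}=CX$.

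For closure, I would invoke Theorem~\ref{thm:closure}(4): $CX$ is Cauchy-complete and separated, and the $[0,1]$-subcategory $L$ inherits Cauchy-completeness from $A$ via the embedding $m$; hence $L$ is closed in $CX$. Combining the two gives $L=\overline{L}=CX$.

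The main technical obstacle will be in the multiplication case, where Lemma~\ref{lem:stone} requires continuity of $\hom$ on $\im(\psi)\times[0,1]$, and this fails at $(0,0)$. The way around it is to exploit the additional closure under $-\ominus u$ available in $\catA$: given $\psi\in CX$ and $\varepsilon>0$, the function $\psi\ominus\varepsilon$ has the property that any $\psi'\in L$ approximating it well enough gives an approximation of $\psi$, while the relevant $\hom$ computations can be restricted to arguments bounded away from $0$ where continuity is unproblematic. One then passes to the limit $\varepsilon\to 0$ inside the Cauchy-complete $CX$ to conclude $\psi\in\overline{L}$. For the \L ukasiewicz tensor this issue does not arise since $\hom$ is globally continuous.
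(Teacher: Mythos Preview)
Your overall strategy---identify $A$ with $L=m(A)$, show $L$ is closed (via Cauchy-completeness and Theorem~\ref{thm:closure}(4)) and dense (via Lemma~\ref{lem:stone} together with Lemma~\ref{d:lem:2} or~\ref{d:lem:1})---is exactly the paper's, and for $\otimes=\luk$ your argument is complete and matches the paper.

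The gap is in the multiplication case. Your proposed workaround via $\psi\ominus\varepsilon$ does not achieve what you need: the function $\psi\ominus\varepsilon$ still takes the value $0$ wherever $\psi\le\varepsilon$, so $\hom$ restricted to $\im(\psi\ominus\varepsilon)\times[0,1]$ is again discontinuous at $(0,0)$ and Lemma~\ref{lem:stone} does not apply. Moreover, the limiting step ``pass to $\varepsilon\to 0$'' fails in the relevant $[0,1]$-categorical closure: for $\otimes=*$ one has $d(\psi,\psi\ominus\varepsilon)=\inf_x\hom(\psi(x),(\psi\ominus\varepsilon)(x))$, and this equals $0$ at any point where $0<\psi(x)\le\varepsilon$; so $\psi\ominus\varepsilon$ need not approach $\psi$ in the sense of Theorem~\ref{thm:closure}(1).

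The paper's fix is different and sharper: given $\psi\in CX$, set $\psi'=\tfrac{1}{2}*\psi+\tfrac{1}{2}$. This $\psi'$ is again in $CX$ and has image contained in $[\tfrac{1}{2},1]$, where $\hom(u,v)=\min(v/u,1)$ is continuous. Hence Lemma~\ref{lem:stone} (with separation from Lemma~\ref{d:lem:1}) gives $\psi'\in\overline{L}=L$. Now recover $\psi$ using only operations that $L$ is closed under: $\psi'\ominus\tfrac{1}{2}=\tfrac{1}{2}*\psi\in L$, and then $\hom(\tfrac{1}{2},\tfrac{1}{2}*\psi)=\psi$, i.e.\ $\psi=(\psi'\ominus\tfrac{1}{2})\pitchfork\tfrac{1}{2}\in L$. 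The affine shift, rather than truncated subtraction, is what pushes the image away from $0$.
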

\begin{proof}
  Clearly, if $m$ is an isomorphism, then $A$ is Cauchy-complete since
  $CX$ is so. The reverse implication is clear for $\otimes=\luk$ by
  Lemmas~\ref{lem:stone} and \ref{d:lem:2}. Consider now $\otimes=*$
  multiplication. Let $\psi\in CX$. Put
  $\psi'=\frac{1}{2}*\psi+\frac{1}{2}$, then $\psi'$ is monotone and
  continuous. By Lemmas~\ref{lem:stone} and \ref{d:lem:1},
  $\psi'\in\im(m)$ and therefore also
  $\psi=\hom(\frac{1}{2},\psi'\ominus\frac{1}{2})$ belongs to $CX$.
\end{proof}

We say that an object $A$ of $\catA$ \df{has enough characters}
whenever the cone $(\varphi:A\to[0,1])_\varphi$ of all morphisms into
$[0,1]$ separates the points of $A$.

\begin{corollary}
  Let $A$ be an object in $\catA$. Then $A\simeq CX$ in $\catA$ for
  some partially ordered compact space $X$ if and only if $A$ is
  Cauchy-complete and has enough characters.
\end{corollary}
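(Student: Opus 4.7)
The plan is to exhibit the partially ordered compact space $X$ as the space of characters of $A$, and then to invoke Theorem~\ref{thm.stone-weierstrass} applied to the canonical evaluation map $A \to CX$ to conclude that it is an isomorphism in $\catA$.

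For the forward implication, if $A \simeq CX$ in $\catA$, then $A$ is Cauchy-complete because $CX$ is so by the proposition on Cauchy-completeness of $CX$, and Cauchy-completeness is preserved by isomorphism of $[0,1]$-categories. For every $x \in X$, the evaluation $\ev_x : CX \to [0,1]$ is a morphism in $\catA$ since every operation on $CX$ is computed pointwise from the corresponding operation on $[0,1]$, and these evaluations clearly separate the elements of $CX$; hence $CX$, and thus $A$, has enough characters.

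For the converse, assume $A$ is Cauchy-complete and has enough characters. I take $X$ to be the set of characters $\varphi : A \to [0,1]$ of $A$, endowed with the coarsest partially ordered compact structure for which every evaluation map $\ev_a : X \to [0,1]^\op$ is a morphism of $\POSCH$. Such a structure exists because $X$ can be realised as a closed subset of a power $([0,1]^\op)^{|A|}$ formed in $\POSCH$: each defining equation of a character (preservation of $\vee$, of each $-\otimes u$, of the monoid operation, and, in the appropriate case, of $-\ominus u$ or $-\pitchfork u$) is the equaliser of two $\POSCH$-morphisms between powers of $[0,1]^\op$, exactly as in Proposition~\ref{prop:init} and Remark~\ref{rem:init-lax}. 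By construction the cone $(\ev_a)_{a \in A}$ is initial in $\POSCH$, and it is point-separating because characters are determined by their values on the elements of $A$.

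I then define $\varepsilon_A : A \to CX$ by $\varepsilon_A(a)(\varphi) = \varphi(a)$. Since the operations on $CX$ are defined pointwise and each $\varphi \in X$ preserves the operations of $\catA$, the map $\varepsilon_A$ is itself a morphism in $\catA$. The hypothesis that $A$ has enough characters ensures that $\varepsilon_A$ is injective, and the cone $(\varepsilon_A(a) : X \to [0,1]^\op)_{a \in A}$ coincides with $(\ev_a)_{a \in A}$, already seen to be point-separating and initial. Applying Theorem~\ref{thm.stone-weierstrass} to the Cauchy-complete $A$ yields that $\varepsilon_A$ is an isomorphism in $\catA$, as required. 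The main technical point I expect to have to check carefully is that the character set really does carry the desired $\POSCH$-structure with initial evaluation cone; this is a standard limit argument making essential use of the fact that $[0,1]^\op$ is an initial cogenerator of $\POSCH$ (Proposition~\ref{prop:initial-cogenerator}) and that every operation of $\catA$ on $[0,1]^\op$ underlies a $\POSCH$-morphism.
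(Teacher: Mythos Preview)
Your proof is correct and follows essentially the same route as the paper's own argument: take $X$ to be the set of $\catA$-characters with the initial $\POSCH$-structure induced by the evaluation cone, observe that the Gelfand map $a\mapsto\ev_a$ is an injective $\catA$-morphism whose image is an initial point-separating subcone of $CX$, and then apply Theorem~\ref{thm.stone-weierstrass}. The only difference is that the paper delegates the existence of the $\POSCH$-structure on $X$ to \citep[Proposition~2.4]{LR79}, whereas you unfold this step directly via the equaliser/pullback description of Proposition~\ref{prop:init} and Remark~\ref{rem:init-lax}; both routes are standard and equally valid.
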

\begin{proof}
  If $A\simeq CX$ in $\catA$, then clearly $A$ is Cauchy-complete and
  has enough characters. Assume now that $A$ has this
  properties. Then, by \citep[Proposition 2.4]{LR79},
  $X=\hom(A,[0,1])$ is a partially ordered compact space with the
  initial structure relative to all evaluation maps
  $\ev_a:X\to[0,1]^\op$ ($a\in A$). The map
  $m:A\to CX,\,a\mapsto ev_a$ is injective since $A$ has enough
  characters and satisfies the hypothesis of
  Theorem~\ref{thm.stone-weierstrass}, hence $m$ is an isomorphism.
\end{proof}

Therefore we consider now the following categories.
\begin{itemize}
\item $\catA_{[0,1],\cc}$ denotes the full subcategory of $\catA$
  defined by the Cauchy-complete objects having enough characters.
\item $\catB_{[0,1],\cc}$ denotes the category with the same objects
  as $\catA_{[0,1],\cc}$, and the morphisms of $\catB_{[0,1],\cc}$ are
  the finitely cocontinuous $[0,1]$-functors which laxly preserve the
  multiplication.
\end{itemize}

\begin{theorem}
  For $\otimes=*$ multiplication or $\otimes=\luk$ the \L{}ukasiewicz
  tensor,
  \begin{align*}
    \POSCHDIST\simeq\catB_{[0,1],\cc}^\op &&\text{and}&& \POSCH\simeq\catA_{[0,1],\cc}^\op.
  \end{align*}
\end{theorem}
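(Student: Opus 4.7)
The plan is to stack two facts already proven: the full embeddings $C:\POSCH\to\Mnd(\FinSup{[0,1]})^{\op}$ and $C:\POSCHDIST\to\LaxMnd(\FinSup{[0,1]})^{\op}$ coming from the corollary of Theorem~\ref{thm:full-embedding-1} and from Theorem~\ref{thm:full-embedding-1} itself in the \L{}ukasiewicz case (with the $\Mnd_\ominus$/$\LaxMnd_\ominus$ analogues supplied by Theorem~\ref{thm:full-embedding-2} for multiplication), and the characterisation of objects of the form $CX$ as the Cauchy-complete objects of $\catA$ having enough characters (the corollary immediately preceding the statement). The argument then reduces to verifying that $C$ factors through the appropriate subcategories $\catA_{[0,1],\cc}^{\op}$ and $\catB_{[0,1],\cc}^{\op}$, and that no hom-sets are lost in the restriction.

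First I would check that, for every partially ordered compact space $X$, the object $CX$ lies in $\catA_{[0,1],\cc}$: it is Cauchy-complete by the proposition established earlier in this section; it is $[0,1]$-powered with powers $(\psi\pitchfork u)(x)=\hom(u,\psi(x))$, which belongs to $CX$ because $\hom(u,-):[0,1]\to[0,1]$ is continuous and monotone for both tensors under consideration; and it has enough characters since each evaluation $\ev_x:CX\to[0,1]$ preserves all of the relevant pointwise operations, and distinct continuous functions differ at some point. Moreover, for $f:X\to Y$ in $\POSCH$ the map $Cf$ preserves $u$-powers pointwise, and for $\varphi:X\modto Y$ in $\POSCHDIST$ the map $C\varphi$ is, by Theorem~\ref{thm:full-embedding-1}, a finitely cocontinuous $[0,1]$-functor that laxly preserves multiplication. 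Consequently $C$ lands in $\catA_{[0,1],\cc}^{\op}$, respectively in $\catB_{[0,1],\cc}^{\op}$.

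Next I would verify full fidelity of the restricted functors. Faithfulness is automatic because $\catA_{[0,1],\cc}$ and $\catB_{[0,1],\cc}$ are subcategories of categories in which $C$ is already known to be faithful. For fullness, any morphism $\Phi:CY\to CX$ in $\catA$ is in particular a morphism of $\Mnd(\FinSup{[0,1]})$ (or $\Mnd_\ominus$), so the corollary of Theorem~\ref{thm:full-embedding-1} (or Theorem~\ref{thm:full-embedding-2}) yields a unique $f:X\to Y$ in $\POSCH$ with $\Phi=Cf$; the same argument applied to $\LaxMnd(\FinSup{[0,1]})$ (or $\LaxMnd_\ominus$) handles $\catB$ and $\POSCHDIST$. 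Essential surjectivity is then exactly the content of the corollary preceding the statement.

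The heavy lifting has already been done: the Stone--Weierstra\ss{} Theorem~\ref{thm.stone-weierstrass} is what makes the canonical injection from a Cauchy-complete $A$ with enough characters into $C\hom(A,[0,1])$ actually surjective, and hence an isomorphism. What remains is essentially bookkeeping, namely tracking the extra operations ($u$-powers, and $\ominus$ in the multiplicative case) across the four categories involved and observing that no morphisms between objects of the form $CX$ are lost when one passes to the smaller subcategories $\catA_{[0,1],\cc}$ and $\catB_{[0,1],\cc}$.
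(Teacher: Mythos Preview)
Your proposal is correct and matches the paper's intended argument, which is left implicit: the theorem is stated immediately after the corollary characterising the essential image of $C$ and is meant to be read as the evident assembly of that corollary with the full-faithfulness results of Section~\ref{sec:duality-distributors}. One small clarification: for $\otimes=*$ you do not actually need to invoke Theorem~\ref{thm:full-embedding-2} for the $\catB$-side, since Theorem~\ref{thm:full-embedding-1} already covers multiplication and the morphisms of $\catB_{[0,1],\cc}$ are precisely those of $\LaxMnd(\FinSup{[0,1]})$ (no $\ominus$-preservation is required there); the $\ominus$ operation enters only through the definition of $\catA$ in the multiplicative case, where it is needed for the Stone--Weierstra\ss{} argument of Lemma~\ref{d:lem:1}.
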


\section{Metric compact Hausdorff spaces and metric Vietoris monads}
\label{sec:metr-comp-hausd}

As we pointed already out in Remark~\ref{rem:Kleisli-adjunction}, the
constructions leading to dualities for Kleisli categories seem to be
more ``canonical'' if we work with a monad $\mT=\monad$ satisfying
$T1\simeq[0,1]^\op$. In \citep{Hof14} we introduce a generalisation of
the Vietoris monad with this property in the context of ``enriched
topological spaces'', more precisely, models of topological theories
as defined in \citep{Hof07}. Such a topological theory involves a
$\SET$-monad and a quantale together with an Eilenberg--Moore algebra
structure on the underlying set of the quantale, subject to further
axioms. In this paper we consider only the ultrafilter monad
$\mU=\umonad$ and a quantale with underlying lattice $[0,1]$. The
convergence of the Euclidean compact Hausdorff topology on $[0,1]$
defines an Eilenberg--Moore algebra structure for the ultrafilter
monad:
\[
  \xi:U[0,1]\to[0,1],\, \xi(\fx)=\sup_{A\in\fx}\inf_{u\in
    A}u=\inf_{A\in\fx}\sup_{u\in A}u.
\]
We continue assuming that the multiplication of the quantale $[0,1]$
is continuous and has $1$ as neutral element; that is, we continue
working under Assumption~\ref{ass:general-assumption}.  Under these
conditions, $\thU=\utheory$ is a strict topological theory as defined
in \citep{Hof07}. In order to keep the amount of background theory
small, we do not enter here into the details of monad-quantale
enriched categories but give only the details needed to understand the
Kleisli category of the $[0,1]$-enriched Vietoris monad. We refer to
\citep[Section 1]{Hof14} for an overview, and a comprehensive
presentation of this theory can be found in \citep{HST14}.

The functor $U:\SET\to\SET$ extends to a 2-functor
$U_\xi:\Rels{[0,1]}\to\Rels{[0,1]}$ where $U_\xi X=UX$ for every set
$X$ and
\[
  U_\xi r(\fx,\fy)=\{\xi\cdot Ur(\fw)\mid\fw\in
    U(X\times Y),
    U\pi_1(\fw)=\fx,U\pi_2(\fw)=\fy\}=\sup_{A\in\fx,B\in\fy}\inf_{x\in
    A,y\in B}r(x,y)
\]
for all $r:X\relto Y$ in $\Rels{[0,1]}$, $\fx\in UX$ and $\fy\in UY$.

\begin{definition}
  A \df{$\thU$-category} is a set $X$ equipped with a $[0,1]$-relation
  \[
    a:UX\times X\longrightarrow [0,1],
  \]
  subject to the axioms
  \begin{align*}
    1=a(e_X(x),x) &&\text{and}&& U_\xi a(\fX,\fx)\otimes a(\fx,x)\le a(m_X(\fX),x),
  \end{align*}
  for all $\fX\in UUX$, $\fx\in UX$ and $x\in X$. Given
  $\thU$-categories $(X,a)$ and $(Y,b)$, a \df{$\thU$-functor}
  $f:(X,a)\to(Y,b)$ is a map $f:X\to Y$ such that
  \[
    a(\fx,x)\le b(Uf(\fx),f(x)),
  \]
  for all $\fx\in UX$ and $x\in X$.
\end{definition}

Clearly, the composite of $\thU$-functors is a $\thU$-functor, and so
is the identity map $1_X:(X,a)\to(X,a)$, for every $\thU$-category
$(X,a)$. We denote the category of $\thU$-categories and
$\thU$-functors by $\Cats{\thU}$. The most notable example is
certainly the case of $\otimes=*$ being multiplication: since
$[0,1]\simeq[0,\infty]$, $\Cats{\thU}$ is isomorphic to the category
$\APP$ of approach spaces and non-expansive maps (see \citep{Low97}).

The category $\Cats{\thU}$ comes with a canonical forgetful functor
$\Cats{\thU}\to\SET$ which is topological, hence $\Cats{\thU}$ is
complete and cocomplete and the forgetful functor $\Cats{\thU}\to\SET$
preserves limits and colimits. For example, the product of
$\thU$-categories $(X,a)$ and $(Y,b)$ can be constructed by first
taking the Cartesian product $X\times Y$ of the sets $X$ and $Y$, and
then equipping $X\times Y$ with the structure $c$ defined by
\[
  c(\fw,(x,y))=a(U\pi_1(\fw),x)\wedge b(U\pi_2(\fw),y),
\]
for all $\fw\in U(X\times Y)$, $x\in X$ and $y\in Y$. More important
to us is, however, a different structure $c$ on $X\times Y$ derived
from the tensor product of $[0,1]$, namely
\[
  c(\fw,(x,y))=a(U\pi_1(\fw),x)\otimes b(U\pi_2(\fw),y).
\]
We denote this $\thU$-category as $(X,a)\otimes(Y,b)$; in fact, this
construction extends naturally to morphisms and yields a functor
$-\otimes-:\Cats{\thU}\times\Cats{\thU}\to\Cats{\thU}$.

Every $\thU$-category $(X,a)$ defines a topology on the set $X$ with
convergence
\[
  \fx\to x \iff 1=a(\fx,x),
\]
and this construction defines a functor $\Cats{\thU}\to\TOP$ which
commutes with the forgetful functors to $\SET$. The functor
$\Cats{\thU}\to\TOP$ has a left adjoint $\TOP\to\Cats{\thU}$ which
sends a topological space to the $\thU$-category with the same
underlying set, say $X$, and with the discrete convergence
\[
  UX\times X\longrightarrow[0,1],\,(\fx,x)\longmapsto
  \begin{cases}
    1 & \text{if $\fx\to x$},\\
    0 & \text{otherwise.}
  \end{cases}
\]
This functor allows us to interpret topological spaces as
$\thU$-categories. Note that, for a topological space $X$ and a
$\thU$-category $Y$, we have $X\otimes Y=X\times Y$ in
$\Cats{\thU}$. There is also a faithful functor
$\Cats{\thU}\to\Cats{[0,1]}$ which commutes with the forgetful
functors to $\SET$ and sends a $\thU$-category $(X,a)$ to the
$[0,1]$-category $(X,a_0)$ where $a_0=a\cdot e_X$; hence
$a_0(x,y)=a(e_X(x),y)$, for all $x,y\in X$. We also note that the
natural order of the underlying topology of an $\thU$-category $(X,a)$
coincides with the order induced by the $[0,1]$-category $(X,a_0)$. We
refer to this order as the underlying order of $(X,a)$.

\begin{remark}
  An important example of a $\thU$-category is given by $[0,1]$ with
  convergence $(\fx,u)\mapsto\hom(\xi(\fx),u)$. In the underlying
  topology of $[0,1]$, $\fx\to u$ precisely when $\xi(\fx)\le u$;
  hence, the closed subsets are precisely the intervals $[v,1]$. From
  now on $[0,1]$ refers to this $\thU$-category; to distinguish,
  $[0,1]_e$ denotes the standard compact Hausdorff space with
  convergence $\xi$, and $[0,1]_o$ denotes the ordered compact
  Hausdorff space with the usual order and the Euclidean topology.
\end{remark}

The enriched Vietoris monad $\mV=\vmonad$ on $\Cats{\thU}$ sends a
$\thU$-category $X$ to the $\thU$-category $VX$ with underlying set
\[
  \{\varphi:X\to[0,1]\mid \varphi\text{ is a $\thU$-functor}\},
\]
the underlying $[0,1]$-category structure of the $\thU$-category $VX$
is given by
\[
  [\varphi,\varphi']=\inf_{x\in X}\hom(\varphi'(x),\varphi(x)),
\]
and therefore
\[
  \varphi\le \varphi'\iff \varphi(x)\ge \varphi'(x),\text{ for all
  }x\in X
\]
in its underlying order. It is shown in \citep{Hof14} that this monad
restricts to the $[0,1]$-enriched counterpart of the category of
stably compact spaces and spectral maps: the category of separated
representable $\thU$-categories. A $\thU$-category $(X,a)$ is
\df{representable} whenever $a\cdot U_\xi a=a\cdot m_X$ and there is a
map $\alpha:UX\to X$ with $a=a_0\cdot\alpha$. If $(X,a)$ is separated,
then $\alpha:UX\to X$ is unique and an $\mU$-algebra structure on $X$;
that is, the convergence of a compact Hausdoff topology on $X$. The
separated representable $\thU$-categories are the objects of the
category $\Reps{\thU}$, a morphism $f:(X,a)\to(Y,b)$ in $\Reps{\thU}$
is a $[0,1]$-functor $f:(X,a_0)\to(Y,b_0)$ where $f$ is also
continuous with respect to the corresponding compact Hausdoff
topologies; that is, $f\cdot\alpha=\beta\cdot Uf$. We also note that
the category $\Reps{\thU}$ is complete and the inclusion functors
$\Reps{\thU}\to\Cats{\thU}$ preserves limits. For $(X,a)$ in
$\Reps{\thU}$ with $a=a_0\cdot\alpha$, also $(X,a_0^\circ\cdot\alpha)$
is a separated representable $\thU$-category, called the \df{dual} of
$(X,a)$ and denoted as $(X,a)^\op$. In fact, this construction defines
a functor $(-)^\op:\Reps{\thU}\to\Reps{\thU}$ leaving maps
unchanged. The $\thU$-category $[0,1]$ is separated and representable,
with $\hom:\V\relto\V$ being the underlying $[0,1]$-category structure
and $\xi:U[0,1]\to[0,1]$ the convergence of the corresponding compact
Hausdoff topology. We note that now $V1$ is isomorphic to
$[0,1]^\op$. Below we collect some important properties.

\begin{proposition}\label{prop:1}
  The following assertions hold.
  \begin{enumerate}
  \item The maps
    \begin{align*}
      \wedge:[0,1]\times[0,1]&\to[0,1], & \vee:[0,1]\times[0,1]&\to[0,1],  & \otimes:[0,1]\otimes[0,1]&\to[0,1]
    \end{align*}
    are morphisms in $\Reps{\thU}$.
  \item For every $u\in[0,1]$, $u\otimes-:[0,1]\to[0,1]$ is a morphism
    of $\Reps{\thU}$. Therefore the map $u\otimes-$ preserves
    non-empty infima.
  \item Let $u\in[0,1]$ so that $\hom(u,-)$ is continuous of type
    $\hom(u,-):[0,1]_e\to[0,1]_e$. Then $\hom(u,-):[0,1]\to[0,1]$ is a
    morphism in $\Reps{\thU}$.
  \item $\inf:[0,1]^I\to[0,1]$ is a $\thU$-functor, for every set $I$.
  \item For every $v\in[0,1]$, $\hom(-,v):[0,1]^\op\to[0,1]$ is a
    $\thU$-functor.
  \end{enumerate}
\end{proposition}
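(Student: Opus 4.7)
The plan is to address each item by reducing it to an inequality in the quantale $[0,1]$ that follows from the adjunction $u\otimes-\dashv\hom(u,-)$ together with continuity properties of the tensor. For items~(1)--(3), which concern morphisms in $\Reps{\thU}$, I will check the $[0,1]$-functor property on the underlying $[0,1]$-categories separately from continuity with respect to the Euclidean topology on $[0,1]$; items~(4) and~(5) only claim $\thU$-functoriality, which I will establish by direct computation with ultrafilters.

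For~(1), continuity of $\wedge,\vee,\otimes$ into $[0,1]_e$ is standard, with the last being exactly Assumption~\ref{ass:general-assumption}. The underlying $[0,1]$-category on $[0,1]\otimes[0,1]$ carries the structure $\hom(u,u')\otimes\hom(v,v')$, and the $[0,1]$-functor inequality for $\otimes$ reduces, via adjunction, to $(u\otimes\hom(u,u'))\otimes(v\otimes\hom(v,v'))\le u'\otimes v'$, which is immediate from $u\otimes\hom(u,u')\le u'$ and commutativity. For $\wedge$ and $\vee$, whose source is the Cartesian product with structure $\hom(u,u')\wedge\hom(v,v')$, the analogous reductions use $u\wedge v\le u,v$, respectively the distributivity of $\otimes$ over $\vee$. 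Item~(2) then follows from~(1) by factoring $u\otimes-$ as $[0,1]\simeq 1\otimes[0,1]\to [0,1]\otimes[0,1]\xrightarrow{\otimes}[0,1]$, the first arrow being the constant morphism $u:1\to[0,1]$ paired with the identity; preservation of non-empty infima then follows because a morphism of $\Reps{\thU}$ is a continuous monotone map on $[0,1]_e$, and every non-empty infimum in $[0,1]$ is the limit of a decreasing net. Item~(3) needs continuity (by hypothesis) plus the inequality $\hom(v,v')\le\hom(\hom(u,v),\hom(u,v'))$, which by adjunction is the transitivity $\hom(u,v)\otimes\hom(v,v')\le\hom(u,v')$.

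For~(4), the product structure on $[0,1]^I$ yields the inequality to be proven as
\[
    \inf_i\hom(\xi(U\pi_i(\fw)),x_i)\le\hom(\xi(U\inf(\fw)),\inf_i x_i);
\]
by adjunction, it is enough to show, for every index $j$, that $\xi(U\inf(\fw))\otimes\inf_i\hom(\xi(U\pi_i(\fw)),x_i)\le x_j$. The monotonicity of $\xi\cdot U(-)$ in its function argument (an easy consequence of $\xi(Uf(\fx))=\inf_{A\in\fx}\sup_{x\in A}f(x)$) applied to $\inf\le\pi_j$ gives $\xi(U\inf(\fw))\le\xi(U\pi_j(\fw))$, whence the bound reduces to $u\otimes\hom(u,x_j)\le x_j$. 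I expect~(5) to be the main obstacle: with the $\thU$-structure on $[0,1]^\op$ being $(\fx,x)\mapsto\hom(x,\xi(\fx))$, the required inequality reduces, using the adjunction and $\hom(x,\xi(\fx))\otimes x\le\xi(\fx)$, to
\[
    \xi(\fx)\otimes\xi(U\hom(-,v)(\fx))\le v.
\]
The plan here is to introduce $g:[0,1]\to[0,1]\times[0,1]$, $u\mapsto(u,\hom(u,v))$, to observe that $\otimes\circ g\le v$ pointwise, and then to use continuity of $\otimes$ in the form $\xi\cdot U\otimes=\otimes\cdot(\xi\times\xi)$ (after identifying $U([0,1]^2)$ with pairs of ultrafilters via $U\pi_1,U\pi_2$) to rewrite the left-hand side as $\xi(U(\otimes\circ g)(\fx))$; monotonicity of $\xi\cdot U(-)$ then bounds this by $\xi(U\mathrm{const}_v(\fx))=v$. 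The subtle point is the rewriting step that converts a product of two $\xi$-values into $\xi$ applied to a single ultrafilter, which is where the continuity of the tensor is essential.
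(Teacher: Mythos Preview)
Your argument is correct and, for items~(1)--(3), proceeds along the same lines as the paper: verify the $[0,1]$-functor inequality via the adjunction $u\otimes-\dashv\hom(u,-)$ and check continuity for the Euclidean topology separately. One small omission is that the statement for $\otimes$ implicitly requires $[0,1]\otimes[0,1]$ to lie in $\Reps{\thU}$ (so that your two-step verification really characterises a morphism in that category); the paper handles this by citing \citep[Remark~4.9]{Hof13}, and you should at least acknowledge the point.

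Where your write-up differs from the paper is in items~(4) and~(5). The paper simply invokes \citep[Corollary~5.3]{Hof07} for~(4) and \citep[Lemmas~5.1 and~3.2]{Hof07} for~(5), whereas you carry out the ultrafilter computations by hand. Your computation for~(4) is routine; your computation for~(5) is the genuinely interesting one, and the key step---rewriting $\xi(\fx)\otimes\xi(U\hom(-,v)(\fx))$ as $\xi(U(\otimes\circ g)(\fx))$ via the $\mU$-algebra homomorphism property of the continuous map $\otimes$---is exactly right. This is a more elementary and self-contained route than the paper's, which relies on general machinery from \citep{Hof07} about $\thU$-functoriality of $\hom$-type maps; the trade-off is that your argument is specific to this situation, while the cited lemmas give reusable criteria.
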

\begin{proof}
  The first assertion follows directly from our
  Assumption~\ref{ass:general-assumption}. Note that
  $[0,1]\otimes[0,1]$ is in $\Reps{\thU}$, see \citep[Remark
  4.9]{Hof13}. The second assertion is a direct consequence of the
  first one, and the third one follows form $\hom(u,-):[0,1]\to[0,1]$
  being a $[0,1]$-functor. Regarding the forth assertion, see
  \citep[Corollary 5.3]{Hof07}. Regarding the last assertion, the
  condition of \citep[Lemma 5.1]{Hof07} can be verified using
  \citep[Lemma 3.2]{Hof07}.
\end{proof}

\begin{remark}
  Similarly to the connection between stably compact spaces and
  partially ordered compact spaces, representable $\thU$-categories
  can be seen as compact Hausdorff spaces with a compatible
  $[0,1]$-category structure. More precisely, in \citep{Tho09} it is
  shown that the $\SET$-monad $\mU$ extends to a monad on
  $\Cats{[0,1]}$, and there is a natural comparison functor
  $K:(\Cats{[0,1]})^\mU\to\Cats{\thU}$ sending a $[0,1]$-category
  $(X,a_0)$ with Eilenberg--Moore algebra structure
  $\alpha:U(X,a_0)=(UX,U_\xi a_0)\to(X,a_0)$ to the $\thU$-category
  $(X,a_0\cdot\alpha)$. The functor $K$ restricts to an equivalence
  between the full subcategory of $(\Cats{[0,1]})^\mU$ defined by all
  separated $[0,1]$-categories and the category $\Reps{\thU}$ (see
  also \citep{Hof14} for details).
\end{remark}

We do not need to say much about the enriched Vietoris monad
$\mV=\vmonad$ here, it is enough to have a better understanding of the
Kleisli category $\Reps{\thU}_\mV$. A morphism $X\to VY$ in
$\Reps{\thU}$ corresponds to a $[0,1]$-distributor between the
underlying $[0,1]$-categories, we call such a distributor a
\df{continuous $[0,1]$-distributor} between the separated
representable $\thU$-categories $X$ and $Y$. Similar to the classical
case, composition in $\Reps{\thU}_\mV$ corresponds to composition of
$[0,1]$-relations, and the identity morphism on $X$ is given by $a_0$
(see \citep[Section 8]{Hof14}). Therefore we identify
$\Reps{\thU}_{\mV}$ with the category $\RepDists{\thU}$ of separated
representable $\thU$-categories and continuous $[0,1]$-distributors
between them, and with the compositional structure and identities as
described above.

The adjunction
\[
  \Cats{\thU}\adjunct{\text{discrete}}{\text{forgetful}}\TOP
\]
restricts to an adjunction
\[
  \Reps{\thU}\adjunct{\text{discrete}}{\text{forgetful}}\STCOMP\simeq\POSCH,
\]
which allows us to transfer $\mV$ to a monad $\mV=\vmonad$ on
$\POSCH$. The Kleisli category $\POSCH_\mV$ for this monad can be
identified with the full subcategory $\POSCHDists{\thU}$ of
$\RepDists{\thU}$ defined by the $\thU$-categories in the image of
$\POSCH\xrightarrow{\text{discrete}}\Reps{\thU}$. Furthermore, the
category $\POSCHDIST$ can be identified with the subcategory of
$\POSCHDists{\thU}$ defined by those distributors $\varphi:X\modto Y$
where the map $\varphi:X\times Y\to[0,1]$ takes only values in
$\{0,1\}$. The functor
$\Reps{\thU}\xrightarrow{\text{forgetful}}\POSCH$ sends the
$\thU$-category $[0,1]$ to the ordered compact Hausdorff space
$[0,1]_o$ and $[0,1]^\op$ to $[0,1]_o^\op$, the latter follows from
the fact that this functor commutes with dualisation. Finally, there
is also a canonical adjunction
\[
  \Reps{\thU}\adjunct{\text{discrete}}{\text{forgetful}}\COMPHAUS
\]
sending a separated representable $\thU$-category to its corresponding
compact Hausdorff space (see also \citep[Remark 2.6]{Hof14}).

Since the construction of dual adjunctions typically involves initial
lifts, below we give a description of initial cones in $\Reps{\thU}$.
\begin{proposition}\label{d:prop:1}
  Let $(\psi_i:(X,a)\to (X_i,a_i))_{i\in I}$ be a point-separating
  cone in $\Reps{\thU}$. Then the following assertions are equivalent.
  \begin{tfae}
  \item For all $x,y\in X$,
    $\displaystyle{a_0(x,y)=\inf_{i\in
        I}{a_i}_0(\psi_i(x),\psi_i(y))}$.
  \item The cone $(\psi:X\to X_i)_{i\in I}$ is initial with respect to
    the forgetful functor $\Reps{\thU}\to\COMPHAUS$.
  \item The cone $(\psi:X\to X_i)_{i\in I}$ is initial with respect to
    the forgetful functor $\Reps{\thU}\to\SET$.
  \item The cone $(\psi:X\to X_i)_{i\in I}$ is initial with respect to
    the forgetful functor $\Cats{\thU}\to\SET$.
  \end{tfae}
\end{proposition}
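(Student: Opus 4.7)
The plan is to close the cycle $(\text{iv})\Rightarrow(\text{iii})\Rightarrow(\text{ii})\Rightarrow(\text{i})\Rightarrow(\text{iv})$, exploiting point-separation of the cone at two crucial places.

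For $(\text{iv})\Rightarrow(\text{iii})$, given a set map $f\colon Y\to X$ with $(Y,b_0\cdot\beta)$ in $\Reps{\thU}$ and each $\psi_i\cdot f$ in $\Reps{\thU}$, condition (iv) makes $f$ a $\thU$-functor. It remains to verify the compact Hausdorff continuity $f\cdot\beta=\alpha\cdot Uf$. For each $i$, the fact that $\psi_i\cdot f$ and $\psi_i$ lie in $\Reps{\thU}$ gives $\psi_i\cdot f\cdot\beta=\alpha_i\cdot U\psi_i\cdot Uf=\psi_i\cdot\alpha\cdot Uf$, and point-separation of the cone forces $f\cdot\beta=\alpha\cdot Uf$. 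The implication $(\text{iii})\Rightarrow(\text{ii})$ is immediate, since every continuous map between the underlying compact Hausdorff spaces is in particular a map of sets.

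For $(\text{ii})\Rightarrow(\text{i})$, the inequality $a_0(x,y)\le\inf_i(a_i)_0(\psi_i(x),\psi_i(y))$ follows from each $\psi_i$ being a $[0,1]$-functor. For the reverse, form the product $P=\prod_i X_i$ in $\Reps{\thU}$: its $[0,1]$-category structure is the initial one $(a_P)_0(p,q)=\inf_i(a_i)_0(\pi_i(p),\pi_i(q))$ and its compact Hausdorff topology is the product topology. The induced $\psi\colon X\to P$ in $\Reps{\thU}$ with $\pi_i\cdot\psi=\psi_i$ is injective by point-separation and hence a topological embedding between compact Hausdorff spaces. The closed image $\psi(X)\subseteq P$ inherits a separated representable $\thU$-structure from $P$ (using that $\Reps{\thU}$ is closed under limits in $\Cats{\thU}$), making the inclusion $\iota$ a morphism of $\Reps{\thU}$. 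The continuous inverse $g\colon\psi(X)\to X$ of the co-restriction of $\psi$ satisfies $\psi_i\cdot g=\pi_i\cdot\iota\in\Reps{\thU}$ for every $i$, so (ii) promotes $g$ to a morphism of $\Reps{\thU}$, i.e.\ a $[0,1]$-functor; this gives $\inf_i(a_i)_0(\psi_i(x),\psi_i(y))=(a_P)_0(\psi(x),\psi(y))\le a_0(x,y)$.

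For $(\text{i})\Rightarrow(\text{iv})$, let $(Y,b)$ be a $\thU$-category and $f\colon Y\to X$ a map with each $\psi_i\cdot f$ a $\thU$-functor. Using representability of $X_i$ together with the identity $\alpha_i\cdot U\psi_i=\psi_i\cdot\alpha$ (available because $\psi_i\in\Reps{\thU}$), rewrite
\[
b(\fy,y)\le a_i(U\psi_i(Uf(\fy)),\psi_i(f(y)))=(a_i)_0(\psi_i(\alpha(Uf(\fy))),\psi_i(f(y)));
\]
taking infima over $i$ and invoking (i) gives $b(\fy,y)\le a_0(\alpha(Uf(\fy)),f(y))=a(Uf(\fy),f(y))$, so $f$ is a $\thU$-functor. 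The main obstacle is $(\text{ii})\Rightarrow(\text{i})$: one must identify the product in $\Reps{\thU}$ with the pointwise initial $[0,1]$-structure over the Cartesian product of compact Hausdorff spaces, and check that the closed image $\psi(X)\subseteq P$ again carries a separated representable structure so that (ii) can be applied to the inverse of the factorisation through the image.
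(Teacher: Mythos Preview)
Your argument is correct, and the cycle $(\text{iv})\Rightarrow(\text{iii})\Rightarrow(\text{ii})\Rightarrow(\text{i})\Rightarrow(\text{iv})$ closes cleanly; the two uses of point-separation (for $(\text{iv})\Rightarrow(\text{iii})$ to force $f\cdot\beta=\alpha\cdot Uf$, and for $(\text{ii})\Rightarrow(\text{i})$ to make $\psi$ injective) are exactly where that hypothesis is needed.

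The route, however, differs substantially from the paper's. The paper does not argue directly but invokes three external characterisations: Tholen's description of initial lifts for the forgetful functor $(\Cats{[0,1]})^{\mU}\to\Cats{[0,1]}$, the standard fact that point-separating cones into compact Hausdorff spaces are automatically initial for $\COMPHAUS\to\SET$, and the description of initial structures for $\Cats{\thU}\to\SET$ from \emph{Monoidal Topology}. Via the equivalence between $\Reps{\thU}$ and the separated part of $(\Cats{[0,1]})^{\mU}$, these results assemble into the four-way equivalence with essentially no further work. Your approach is more self-contained and elementary, at the cost of having to verify by hand the structure of products in $\Reps{\thU}$ and, more delicately, that a closed subspace of a separated representable $\thU$-category again lies in $\Reps{\thU}$. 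You flag this yourself as the main obstacle; it does go through (the compact Hausdorff convergence $\alpha_P$ restricts to the closed set, and the restricted $[0,1]$-structure is still compatible because sub-$[0,1]$-categories are full), but the justification ``using that $\Reps{\thU}$ is closed under limits in $\Cats{\thU}$'' is not quite the right one, since a closed subspace is not manifestly a limit in $\Cats{\thU}$. A cleaner phrasing is to pass through the equivalence with separated $\mU$-algebras in $\Cats{[0,1]}$, where closed subobjects are visibly subalgebras.
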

\begin{proof}
  This follows from the description of initial structures for the
  functor $(\Cats{[0,1]})^\mU\to\Cats{[0,1]}$ given in
  \citep[][Proposition 3]{Tho09}, the fact that every point-separating
  cone is initial with respect to $\COMPHAUS\to\SET$, and the
  description of initial structures for the functor
  $\Cats{\thU}\to\SET$ in \citep[][Proposition III.3.1.1]{HST14}.
\end{proof}

\begin{definition}
  A point-separating cone in $\Reps{\thU}$ is called \df{initial}
  whenever it satisfies the first and hence all of the conditions
  of Proposition~\ref{d:prop:1}.
\end{definition}

\section{Duality theory for continuous enriched
  distributors}\label{sec:enriched-Vietoris}

In this section we will use the setting described in
Section~\ref{sec:metr-comp-hausd} and aim for results similar to the
ones obtained in Section~\ref{sec:duality-distributors} for ordered
compact Hausdorff spaces. To do so, we continue working under
Assumption~\ref{ass:general-assumption}. By Propositions~\ref{prop:1}
and \ref{prop:init}, the dualising object $([0,1]^\op,[0,1])$ induces
a natural dual adjunction
\[
  \Reps{\thU}\adjunctop{G}{C}\FinSup{[0,1]}^\op;
\]
here $CX$ has as underlying set all morphisms $X\to[0,1]^\op$ in
$\Reps{\thU}$. For every separated representable $\thU$-category $X$,
the map
\[
  \hom(X,[0,1]^\op)\longrightarrow\hom(VX,[0,1]^\op),\,\psi
  \longmapsto(\varphi \mapsto\psi\cdot\varphi =\sup_{x\in
    X}\psi(x)\otimes\varphi(x))
\]
is certainly a morphism $CX\to CVX$ in $\FinSup{[0,1]}$, by
Theorem~\ref{thm:Kleisli-adjunction} and
Remark~\ref{rem:Kleisli-adjunction} we obtain a commutative diagram
\[
  \xymatrix{\RepDists{\thU}\ar[rr]^-{C} && \FinSup{[0,1]}^\op\\
    & \Reps{\thU}\ar[ul]\ar[ur]_C}
\]
of functors. For $\varphi:X\modto Y$ in $\RepDists{\thU}$, we have
\[
  C\varphi:CY\longrightarrow CX,\;\psi\longmapsto\psi\cdot\varphi.
\]
If $\varphi$ lives in $\POSCHDIST$, then $C\varphi$ coincides with
what was defined in the Section~\ref{sec:duality-distributors}.

\begin{remark}\label{rem:tensor-in-metric-case}
  If $X$ is a partially ordered compact space, then, for all
  $\psi_1,\psi_2\in CX$, the composite
  \[
    X\xrightarrow{\Delta_X}X\times X\simeq X\otimes
    X\xrightarrow{\psi_1\otimes\psi_2}[0,1]^\op\otimes[0,1]^\op\simeq([0,1]\otimes[0,1])^\op\xrightarrow{\otimes^\op}[0,1]^\op
  \]
  is also in $\Reps{\thU}$. Therefore we can still consider
  $\psi_1\otimes\psi_2\in CX$; however, $C\varphi$ does not need to
  preserve this operation, not even laxly.
\end{remark}

The functor $C:\RepDists{\thU}\to\FinSup{[0,1]}^\op$ induces a monad
morphism $j$ whose component at $X$ is given by the maps
\begin{equation}\label{formula.metric_monad_morphism}
  j_X:VX\longrightarrow\FinSup{[0,1]}(CX,[0,1]),\;(\varphi:1\modto X)\longmapsto\left(\psi\mapsto\psi\cdot\varphi=\sup_{x\in X}(\psi(x)\otimes\varphi(x))\right).
\end{equation}

For every $\Phi:CX\to[0,1]$ in $\FinSup{[0,1]}$, we define a map
$\varphi:X\to[0,1]$ by
\[
  \varphi(x)=\inf_{\psi\in CX}\hom(\psi(x),\Phi(\psi)).
\]
For every $\psi\in CX$,
\[
  X\xrightarrow{\,\psi\,}[0,1]^\op\xrightarrow{\,\hom(-,\Phi(\psi))\,}[0,1]
\]
is a $\thU$-functor, and so is $\varphi:X\to[0,1]$ since it can be
written as the composite (with $I=CX$)
\[
  X\longrightarrow[0,1]^I\xrightarrow{\,\inf\,}[0,1].
\]
In other words, $\varphi\in VX$. For $\Phi,\Phi':CX\to[0,1]$ in
$\FinSup{[0,1]}$ with corresponding map $\varphi,\varphi':X\to[0,1]$, 
\begin{align*}
  [\varphi',\varphi]
  &=\inf_{x\in X}\hom(\inf_{\overline{\psi}\in CX}\hom(\overline{\psi}(x),\Phi(\overline{\psi})),\inf_{\psi\in CX}\hom(\psi(x),\Phi'(\psi)))\\
  &=\inf_{x\in X}\inf_{\psi\in CX}\hom(\inf_{\overline{\psi}\in CX}\hom(\overline{\psi}(x),\Phi(\overline{\psi})),\hom(\psi(x),\Phi'(\psi)))\\
  &\ge\inf_{x\in X}\inf_{\psi\in CX}\hom(\hom(\psi(x),\Phi(\psi)),\hom(\psi(x),\Phi'(\psi)))\\
  &\ge\inf_{x\in X}\inf_{\psi\in CX}\hom(\Phi(\psi),\Phi'(\psi)) && (\text{$\hom(\psi(x),-)$ is a $[0,1]$-functor})\\
  &=[\Phi',\Phi].
\end{align*}
Therefore $\Phi\mapsto\varphi$ defines a $[0,1]$-functor $GCX\to
VX$. Furthermore, one easily verifies that these constructions define
an adjunction $\Cats{[0,1]}$:

\begin{proposition}\label{prop:j-adjunction}
  Let $X$ be a separated representable $\thU$-category. Then the
  following assertions hold.
  \begin{enumerate}
  \item\label{item:7} For every $\varphi\in VX$ and every $x\in X$,
    \[
      \varphi(x)\le\inf_{\psi\in CX}\hom(\psi(x),\sup_{y\in
        X}\psi(y)\otimes\varphi(y)).
    \]
  \item For every $\Phi:CX\to[0,1]$ in $\FinSup{[0,1]}$ and every
    $\psi\in CX$,
    \[
      \sup_{x\in X}\psi(x)\otimes\inf_{\psi'\in
        CX}\hom(\psi'(x),\Phi(\psi'))\le\Phi(\psi).
    \]
  \end{enumerate}
\end{proposition}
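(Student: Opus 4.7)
The plan is to observe that both inequalities are essentially unit/counit-type statements for the tensor-hom adjunction $u \otimes - \dashv \hom(u,-)$ on the quantale $[0,1]$, and therefore reduce almost immediately to trivial bounds.

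For part (\ref{item:7}), I would fix $\varphi \in VX$, $x \in X$ and an arbitrary $\psi \in CX$. The goal is to show
\[
\varphi(x) \le \hom\!\Bigl(\psi(x),\ \sup_{y\in X}\psi(y)\otimes\varphi(y)\Bigr).
\]
By the defining adjunction $\psi(x)\otimes - \dashv \hom(\psi(x),-)$, this is equivalent to the inequality $\psi(x)\otimes\varphi(x) \le \sup_{y\in X}\psi(y)\otimes\varphi(y)$, which is immediate since the left-hand side appears among the terms of the supremum (take $y=x$). Taking the infimum over $\psi \in CX$ on the right gives the claim.

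For the second part, I would fix $\Phi: CX \to [0,1]$ in $\FinSup{[0,1]}$, $\psi \in CX$ and an arbitrary $x \in X$, and show
\[
\psi(x)\otimes\inf_{\psi'\in CX}\hom(\psi'(x),\Phi(\psi')) \le \Phi(\psi);
\]
taking the supremum over $x \in X$ then yields the stated inequality. To prove this, I would specialise the infimum on the left to $\psi' = \psi$, obtaining
\[
\psi(x)\otimes\inf_{\psi'\in CX}\hom(\psi'(x),\Phi(\psi')) \le \psi(x)\otimes\hom(\psi(x),\Phi(\psi)),
\]
which uses only monotonicity of $\psi(x)\otimes -$. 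The right-hand side is then bounded above by $\Phi(\psi)$ by the counit inequality $u\otimes\hom(u,w)\le w$ of the tensor-hom adjunction, applied with $u=\psi(x)$ and $w=\Phi(\psi)$.

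Neither step presents an obstacle: both inequalities are formal consequences of $\otimes \dashv \hom$ together with elementary properties of suprema and infima, and no properties of $\Phi$ beyond being a map into $[0,1]$ (in particular, no use of finite cocontinuity or the representable structure of $X$) are required.
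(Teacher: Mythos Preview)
Your proof is correct and matches the paper's intent: the paper does not give an explicit argument but merely remarks that ``one easily verifies that these constructions define an adjunction'', and your reduction of both inequalities to the unit/counit of the tensor--hom adjunction $u\otimes-\dashv\hom(u,-)$ is precisely the routine verification the authors had in mind. Your closing observation that neither the finite cocontinuity of $\Phi$ nor the representable structure of $X$ is actually used is also accurate.
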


Recall from Proposition~\ref{prop:initial-cogenerator} that
$[0,1]_o^\op$ is an initial cogenerator in $\POSCH$; so far we do not
know if $[0,1]^\op$ is an initial cogenerator in
$\Reps{\thU}$. Therefore we define:

\begin{definition}
  A separated representable $\thU$-category $X$ is called
  \df{$[0,1]^\op$-cogenerated} if the cone
  $(\psi:X\to[0,1]^\op)_{\psi\in CX}$ is point-separating and initial.
\end{definition}
Of course, $[0,1]^\op$ is $[0,1]^\op$-cogenerated, and so is every
partially ordered compact space. Our next question is whether
$V:\Reps{\thU}\to\Reps{\thU}$ restricts to $[0,1]^\op$-cogenerated
representable $\thU$-categories.  

\begin{lemma}\label{lem:1}
  Assume that the tensor product $\otimes$ on $[0,1]$ is either
  $*$, $\luk$ or $\wedge$. Let $X$ be
  $[0,1]^\op$-cogenerated. Then, for all $x,y\in X$,
  \[
    a_0(y,x)=\inf_{\psi\in CX,\psi(x)=1}\psi(y).
  \]
\end{lemma}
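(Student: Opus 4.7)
The strategy is to rewrite both sides as infima of expressions involving $\hom$, prove the easy direction by inspection, and obtain the hard direction by constructing, for each $\psi\in CX$, a ``renormalised'' $\psi'$ with $\psi'(x)=1$ and $\psi'(y)\le\hom(\psi(x),\psi(y))$. Since $X$ is $[0,1]^\op$-cogenerated, Proposition~\ref{d:prop:1}(i) applied to the initial cone $(\psi:X\to[0,1]^\op)_{\psi\in CX}$ gives
\[
  a_0(y,x)=\inf_{\psi\in CX}\hom(\psi(x),\psi(y)),
\]
using that the underlying $[0,1]$-category structure of $[0,1]^\op$ is $\hom^\circ(u,v)=\hom(v,u)$. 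The inequality $a_0(y,x)\le\inf\{\psi(y)\mid\psi\in CX,\psi(x)=1\}$ is then immediate: for every $\psi\in CX$ with $\psi(x)=1$ one has $\hom(\psi(x),\psi(y))=\psi(y)$, so the restricted infimum already majorises $a_0(y,x)$.

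For the reverse inequality, it suffices to produce, for every $\psi\in CX$, an element $\psi'\in CX$ satisfying $\psi'(x)=1$ and $\psi'(y)\le\hom(\psi(x),\psi(y))$: taking infima over $\psi$ then collapses the right-hand side onto $a_0(y,x)$. The natural candidate is $\psi'=g_u\circ\psi$, where $u=\psi(x)$ and $g_u:[0,1]^\op\to[0,1]^\op$ is a morphism in $\Reps{\thU}$ with $g_u(u)=1$ and $g_u(\psi(y))\le\hom(u,\psi(y))$.

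For $\otimes=\luk$ and $\otimes=*$, the assignment $g_u=\hom(u,-)$ works directly: $\hom(u,-):[0,1]\to[0,1]$ is continuous in both cases, hence a morphism in $\Reps{\thU}$ by Proposition~\ref{prop:1}(3), and applying the dualisation functor $(-)^\op$ (which leaves maps unchanged) gives a morphism $\hom(u,-):[0,1]^\op\to[0,1]^\op$. The composite $\psi'=\hom(u,-)\circ\psi$ lies in $CX$ and satisfies $\psi'(x)=\hom(u,u)=1$ together with $\psi'(y)=\hom(u,\psi(y))$, as required.

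The main obstacle is the case $\otimes=\wedge$, where $\hom(u,-)$ is discontinuous at $v=u$ and the direct composition fails. I would split on whether $u\le\psi(y)$: in that subcase $\hom(u,\psi(y))=1$ and the constant map $\psi'\equiv 1$ (which is clearly in $CX$) suffices. Otherwise, with $w:=\psi(y)<u$, I would replace $\hom(u,-)$ by the continuous piecewise-linear surrogate $g_u$ that equals $w$ on $[0,w]$, interpolates linearly from $w$ at $w$ to $1$ at $u$, and equals $1$ on $[u,1]$. By construction $g_u(u)=1$ and $g_u(w)=w$, and a short case analysis, reducing ultimately to $1-w\ge u-w$, verifies that whenever $b>a$ forces $g_u(b)>g_u(a)$ one has $g_u(a)\ge a$; this is exactly the $\wedge$-enriched functoriality requirement for $g_u$ to be a morphism $[0,1]^\op\to[0,1]^\op$ in $\Reps{\thU}$. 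Then $\psi'=g_u\circ\psi$ again provides the required witness.
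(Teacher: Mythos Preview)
Your proof is correct and follows essentially the same approach as the paper's: both use the initial-cone formula $a_0(y,x)=\inf_{\psi}\hom(\psi(x),\psi(y))$, handle $\otimes=*$ and $\otimes=\luk$ by composing with $\hom(u,-)$ (the paper phrases this via $u$-powers, $u\pitchfork\psi$), and treat $\otimes=\wedge$ by replacing the discontinuous $\hom(u,-)$ with a continuous piecewise-linear surrogate that is a $[0,1]$-functor. The only cosmetic difference is the choice of surrogate in the $\wedge$ case: the paper's map $h$ is the identity on $[0,b]$ for an intermediate $b$ with $\psi(y)\le b<\psi(x)$ and then rises linearly to $1$ at $\psi(x)$, whereas your $g_u$ is constant $w=\psi(y)$ on $[0,w]$ and rises linearly to $1$ at $u=\psi(x)$; both satisfy the required condition $g(a)\ge a$ (equivalently, slope $\ge 1$ on the ramp, which is your $1-w\ge u-w$), so both work.
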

\begin{proof}
  Since $X$ is $[0,1]^\op$-cogenerated, we certainly have
  \[
    a_0(y,x)=\inf_{\psi\in CX}\hom(\psi(x),\psi(y))\le\inf_{\psi\in
      CX,\psi(x)=1}\psi(y).
  \]
  Assume first that $\otimes=*$ or $\otimes=\luk$. For $\psi\in
  CX$, put $u=\psi(x)$. Then
  \[
    \hom(\psi(x),\psi(y))=(u\pitchfork\psi)(y)
  \]
  and $(u\pitchfork\psi)(x)=\hom(u,\psi(x))=1$. Since
  $u\pitchfork\psi\in CX$, the assertion follows. 

  Assume now that $\otimes=\wedge$. The assertion follows immediately
  if $a_0(y,x)=1$. Let now $\psi\in CX$, we may assume that
  $\psi(x)>\psi(y)$. Let $b\in[0,1]$ with $\psi(y)\le
  b<\psi(x)$. Consider the piecewise linear map $h:[0,1]\to[0,1]$ with
  $h(v)=v$ for all $v\le b$ and $h(u)=1$ for all $u\ge\psi(x)$. Then
  $h^\op\cdot\psi\in CX$ since $h:[0,1]_e\to[0,1]_e$ is continuous and
  $h:[0,1]\to[0,1]$ is a $[0,1]$-functor. To see the latter, let
  $u,v\in[0,1]$.  If $\hom(u,v)=1$, then $\hom(h(u),h(v))=1$ since $h$
  is monotone. Assume now $u>v$. We distinguish the following cases.
  \begin{description}
  \item[If $\psi(x)\le v$] In this case,
    $\hom(h(u),h(v))=1\ge \hom(u,v)$.
  \item[If $v<\psi(x)\le u$] Now we have
    $\hom(u,v)=v\le h(v)=\hom(h(u),h(v)$.
  \item[If $v<u<\psi(x)$] Assume first that $b\le u$. Then
    $\hom(u,v)=v$ and $v\le\hom(h(u),h(v))$ since
    $v\otimes h(u)\le v\le h(v)$. Finally, if $v<u<b$, then the
    assertion follows from $u=h(u)$ and $v=h(v)$.
  \end{description}
  We conclude that $h(\psi(y))=\psi(y)$ and $h(\psi(x))=1$.
\end{proof}

Also note that, for every $\psi:X\to[0,1]^\op$ in $\Reps{\thU}$, the
canonical extension $\psi^\Diamond:VX\to[0,1]^\op$ of $\psi$ to the
free $\mV$-algebra $VX$ over $X$ sends $\varphi\in VX$ to
$\sup_{x\in X}\varphi(x)\otimes\psi(x)$; and the diagram
\[
  \xymatrix{VX\ar[dr]_{\psi^\Diamond}\ar[r]^-{j_X} &
    GC(X)\ar[d]^{\ev_\psi}\\ & [0,1]^\op}
\]
commutes.

\begin{lemma}\label{lem:psi-box-pointseparating}
  Assume that the tensor product $\otimes$ on $[0,1]$ is either
  $*$, $\luk$ or $\wedge$. For every $[0,1]^\op$-cogenerated $X$
  in $\Reps{\thU}$, the cone
  $(\psi^\Diamond:VX\to[0,1]^\op)_{\psi\in CX}$ is point-separating.
\end{lemma}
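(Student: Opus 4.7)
The plan is to show that for distinct $\varphi, \varphi' \in VX$, some $\psi \in CX$ satisfies $\psi^\Diamond(\varphi) \neq \psi^\Diamond(\varphi')$. Without loss of generality there exist $x_0 \in X$ and $u \in [0,1]$ with $\varphi'(x_0) < u < \varphi(x_0)$. The task then reduces to producing $\psi \in CX$ with $\psi(x_0) = 1$ and $\psi(z) \otimes \varphi'(z) \le u$ for every $z \in X$: once this is done, $\psi^\Diamond(\varphi') \le u < \varphi(x_0) = \psi(x_0) \otimes \varphi(x_0) \le \psi^\Diamond(\varphi)$.

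Because $\varphi' : X \to [0,1]$ is a $\thU$-functor, it is upper semicontinuous with respect to the Euclidean topology on $[0,1]$, so $U_0 := \{z \in X : \varphi'(z) < u\}$ is an open neighbourhood of $x_0$ on which $\psi(z) \otimes \varphi'(z) \le \varphi'(z) < u$ holds automatically for any $\psi$ bounded by $1$. For each $y$ in the compact set $X \setminus U_0$, the underlying $[0,1]$-functor property of $\varphi'$ gives $a_0(y, x_0) \otimes \varphi'(y) \le \varphi'(x_0) < u$; combining Lemma~\ref{lem:1} with continuity of $\otimes$ in the first variable yields $\psi_y \in CX$ with $\psi_y(x_0) = 1$ and $\psi_y(y) \otimes \varphi'(y) < u$. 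Since $\psi_y$ is continuous, $\varphi'$ is upper semicontinuous, and $\otimes$ is continuous and monotone, the product $z \mapsto \psi_y(z) \otimes \varphi'(z)$ is upper semicontinuous, so $W_y := \{z : \psi_y(z) \otimes \varphi'(z) < u\}$ is open and contains $y$.

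Pick by compactness a finite subcover $W_{y_1}, \ldots, W_{y_n}$ of $X \setminus U_0$ and set $\psi := \psi_{y_1} \wedge \cdots \wedge \psi_{y_n}$. The map $\wedge : [0,1] \times [0,1] \to [0,1]$ is a morphism in $\Reps{\thU}$ by Proposition~\ref{prop:1}(1); since dualisation is a $\Reps{\thU}$-endofunctor fixing underlying maps, the same map is also a morphism $[0,1]^\op \times [0,1]^\op \to [0,1]^\op$. Hence $\psi \in CX$, $\psi(x_0) = 1$, and for every $z \in X \setminus U_0$ there is an index $i$ with $\psi(z) \otimes \varphi'(z) \le \psi_{y_i}(z) \otimes \varphi'(z) < u$, which together with the bound on $U_0$ supplies the required $\psi$.

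The principal obstacle is assembling a single $\psi \in CX$ that uniformly controls $\varphi'$ across $X$: this needs the approximation of $a_0(-, x_0)$ by continuous functions provided by Lemma~\ref{lem:1}, plus upper semicontinuity of $\psi_y \otimes \varphi'$ to make compactness applicable. The restriction to $\otimes \in \{*, \luk, \wedge\}$ is inherited directly from Lemma~\ref{lem:1}.
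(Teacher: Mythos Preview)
Your proof is correct and follows essentially the same route as the paper's: pick $x_0$ and $u$ with $\varphi'(x_0)<u<\varphi(x_0)$, use Lemma~\ref{lem:1} together with the fact that $v\otimes-$ commutes with the relevant infimum to produce for each $y$ some $\psi_y\in CX$ with $\psi_y(x_0)=1$ and $\psi_y(y)\otimes\varphi'(y)<u$, show the corresponding sublevel sets are open in the compact Hausdorff topology, extract a finite subcover by compactness, and take the pointwise infimum $\psi=\psi_{y_1}\wedge\dots\wedge\psi_{y_n}$. The only differences are cosmetic: you carve out the set $U_0=\{z:\varphi'(z)<u\}$ first and only cover its complement, whereas the paper treats all $y\in X$ uniformly (your $U_0$ is harmless but unnecessary, since the argument for $y\notin U_0$ already works for every $y$); and where you argue openness of $W_y$ via upper semicontinuity of $\varphi'$ and joint continuity of $\otimes$, the paper argues directly that the composite $(X,\alpha)\xrightarrow{\langle\varphi',\psi_y\rangle}[0,1]\times[0,1]_e\to[0,1]\otimes[0,1]\xrightarrow{\otimes}[0,1]$ is a $\thU$-functor and hence continuous for the underlying topologies.
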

\begin{proof}
  Let $\varphi_1,\varphi_2\in VX$ and $x\in X$ with
  $\varphi_1(x)<u<\varphi_2(x)$. Let $y\in X$. By Lemma~\ref{lem:1}, and
  since every $v\otimes-:[0,1]\to[0,1]$ preserves non-empty infima,
  \[
    \inf_{\psi\in
      CX,\psi(x)=1}(\varphi_1(y)\otimes\psi(y))=\varphi_1(y)\otimes\left(\inf_{\psi\in
        CX,\psi(x)=1}\psi(y)\right)=\varphi_1(y)\otimes
    a_0(y,x)\le\varphi_1(x)<u.
  \]
  Therefore there is some $\psi_y\in CX$ with $\psi_y(x)=1$ and
  $\varphi_1(y)\otimes\psi_y(y)<u$. The composite
  \[
    (X,\alpha)\xrightarrow{\langle\varphi_1,\psi_y\rangle}[0,1]\times[0,1]_e\simeq[0,1]\otimes[0,1]_e\longrightarrow
    [0,1]\otimes[0,1]\xrightarrow{\otimes}[0,1]
  \]
  is in $\Cats{\thU}$ and therefore also continuous with respect to
  the underlying topologies, which tells us that the set
  \[
    V_y=\{z\in X\mid \varphi_1(z)\otimes\psi_y(z)<u\}
  \]
  is open in the compact Hausdoff space $(X,\alpha)$.

  By construction, $(V_y)_{y\in X}$ is an open cover of the compact
  Hausdorff space $(X,\alpha)$; therefore we find $n\in\N$ and
  $y_1,\dots,y_n$ in $X$ with $X=V_{y_1}\cup\dots\cup V_{y_n}$. Put
  $\psi=\psi_{y_1}\wedge\dots\wedge\psi_{y_n}$, clearly, $\psi\in
  CX$. Then, for all $y\in Y$,
  \begin{align*}
    \varphi_1(y)\otimes\psi(y)<u &&\text{and}&& \psi(x)=1;
  \end{align*}
  consequently,
  \begin{align*}
    \sup_{y\in X}(\psi(y)\otimes\varphi_1(y))& \le u &\text{and}&& \sup_{y\in X}(\psi(y)\otimes\varphi_2(y))&\ge\psi(x)\otimes\varphi_2(x)>u,
  \end{align*}
  and the assertion follows.
\end{proof}

\begin{corollary}\label{cor:j-is-sometimes-injective}
  Assume that the tensor product $\otimes$ on $[0,1]$ is either
  $*$, $\luk$ or $\wedge$.  For every $[0,1]^\op$-cogenerated
  separated representable $\thU$-category $X$, $j_X:VX\to GCX$ is
  injective.
\end{corollary}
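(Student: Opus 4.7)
The plan is to deduce the corollary directly from Lemma~\ref{lem:psi-box-pointseparating} by way of the commutative triangle displayed immediately above that lemma, which says precisely that
\[
  \ev_\psi \cdot j_X = \psi^\Diamond
\]
for every $\psi \in CX$. Thus the family $(\psi^\Diamond)_{\psi \in CX}$ factors through $j_X$, and so joint injectivity of that family forces $j_X$ itself to be injective.

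Concretely, I would argue as follows. Suppose $\varphi_1, \varphi_2 \in VX$ satisfy $j_X(\varphi_1) = j_X(\varphi_2)$. Post-composing both sides with $\ev_\psi$ for an arbitrary $\psi \in CX$ yields $\psi^\Diamond(\varphi_1) = \psi^\Diamond(\varphi_2)$. Since the hypotheses on the tensor product (being one of $*$, $\luk$ or $\wedge$) and on $X$ being $[0,1]^\op$-cogenerated are precisely those of Lemma~\ref{lem:psi-box-pointseparating}, the cone $(\psi^\Diamond : VX \to [0,1]^\op)_{\psi \in CX}$ is point-separating; hence $\varphi_1 = \varphi_2$, and $j_X$ is injective.

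The actual content has already been spent in establishing Lemma~\ref{lem:psi-box-pointseparating}: producing, given $\varphi_1(x) < u < \varphi_2(x)$, a function $\psi \in CX$ with $\psi(x) = 1$ and $\varphi_1(y) \otimes \psi(y) < u$ for all $y$ required Lemma~\ref{lem:1}, the compactness of the underlying Hausdorff space of $X$, and a case distinction reflecting the three admissible tensors. Once that technical step is in hand, the present corollary is a purely formal translation from a statement about the cone $(\psi^\Diamond)_\psi$ to a statement about the single map $j_X$, so there is no further obstacle to overcome.
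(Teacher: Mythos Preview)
Your argument is correct and is precisely the intended one: the paper states this corollary without proof because the commutative triangle $\ev_\psi\cdot j_X=\psi^\Diamond$ displayed just before Lemma~\ref{lem:psi-box-pointseparating} together with the point-separation established there immediately yields injectivity of $j_X$, exactly as you wrote.
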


\begin{corollary}\label{d:cor:1}
  Assume that the tensor product $\otimes$ on $[0,1]$ is either
  $*$, $\luk$ or $\wedge$.  Let $X$ be a $[0,1]^\op$-cogenerated
  separated representable $\thU$-category. Then $j_X:VX\to GCX$ is an
  embedding in $\Cats{[0,1]}$ and therefore also in
  $\Reps{\thU}$. Consequently, the cone
  $(\psi^\Diamond:VX\to[0,1]^\op)_{\psi\in CX}$ is point-separating
  and initial; and therefore $VX$ is $[0,1]^\op$-cogenerated.
\end{corollary}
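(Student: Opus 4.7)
The plan is to first verify that the pair $(g,j_X)$ of Proposition~\ref{prop:j-adjunction} is an adjunction $g\dashv j_X$ in $\Cats{[0,1]}$, and then show its counit is an equality; this will make $j_X$ fully faithful, and combined with the injectivity from Corollary~\ref{cor:j-is-sometimes-injective} will supply the embedding in $\Cats{[0,1]}$. The passage to $\Reps{\thU}$ and the cone statement will follow formally from Proposition~\ref{d:prop:1}.

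For the adjunction, a direct calculation using the identities $\hom(u\otimes v,w)=\hom(u,\hom(v,w))$ and $\hom(\sup_i u_i,w)=\inf_i\hom(u_i,w)$ reduces both $[g(\Phi),\varphi]_{VX}$ and $[\Phi,j_X(\varphi)]_{GCX}$ to the common expression $\inf_{x,\psi}\hom(\varphi(x)\otimes\psi(x),\Phi(\psi))$; parts~(1) and~(2) of Proposition~\ref{prop:j-adjunction} then provide respectively the counit $g\circ j_X\le 1_{VX}$ and the unit $1_{GCX}\le j_X\circ g$ of this adjunction.

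The non-formal ingredient is that $j_X$ reflects the underlying order, which is essentially already in Lemma~\ref{lem:psi-box-pointseparating}: its compactness-based construction starts from the hypothesis $\varphi_1(x)<u<\varphi_2(x)$, i.e.\ from a witness of $\varphi_1\not\le\varphi_2$ in $VX$, and produces $\psi\in CX$ with $j_X(\varphi_1)(\psi)\le u<j_X(\varphi_2)(\psi)$, so $j_X(\varphi_1)\not\le j_X(\varphi_2)$ in $GCX$. The remainder is standard: the unit at $\Phi=j_X(\varphi)$ gives $j_X(\varphi)\le j_X(g(j_X(\varphi)))$, whence order-reflection forces $\varphi\le g(j_X(\varphi))$; combining with the counit and using separatedness of $VX$ yields $g\circ j_X=1_{VX}$, and so
\[
  [\varphi_1,\varphi_2]_{VX}=[g(j_X(\varphi_1)),\varphi_2]_{VX}=[j_X(\varphi_1),j_X(\varphi_2)]_{GCX}
\]
by adjunction. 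Thus $j_X$ is fully faithful, and together with injectivity it is an embedding in $\Cats{[0,1]}$.

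To finish, I apply Proposition~\ref{d:prop:1} to the point-separating singleton cone $\{j_X\}$: its distance-preservation (condition~(i)) is just fully faithfulness, so the equivalent condition of being initial with respect to $\Reps{\thU}\to\SET$ also holds, giving the embedding in $\Reps{\thU}$. The evaluation cone $(\ev_\psi:GCX\to[0,1]^\op)_{\psi\in CX}$ is point-separating and initial on $GCX$ by its very construction as the initial lift of set-theoretic evaluations, and precomposing each $\ev_\psi$ with the initial injection $j_X$ preserves both properties, so the cone $(\psi^\Diamond=\ev_\psi\circ j_X)_{\psi\in CX}$ is point-separating and initial on $VX$; this is exactly the definition of $VX$ being $[0,1]^\op$-cogenerated. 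The only genuine obstacle is the order-reflection step, but Lemma~\ref{lem:psi-box-pointseparating} already carries out the necessary compactness argument for each of the three admissible tensors.
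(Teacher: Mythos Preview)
Your proof is correct and follows the same overall strategy as the paper: exploit the adjunction between $j_X$ and $g$ from Proposition~\ref{prop:j-adjunction} to show that $g\circ j_X=1_{VX}$, so that $j_X$ is a split monomorphism in $\Cats{[0,1]}$, and then read off the remaining statements.

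The paper's argument is more direct at the key step. You go back into the proof of Lemma~\ref{lem:psi-box-pointseparating} to extract that $j_X$ reflects the underlying order, and then combine this with the unit inequality to force $\varphi\le g(j_X(\varphi))$. The paper skips this: it simply uses injectivity of $j_X$ (exactly the statement of Corollary~\ref{cor:j-is-sometimes-injective}). From the two inequalities of Proposition~\ref{prop:j-adjunction} one obtains, by monotonicity of $j_X$ and separatedness of $GCX$, that $j_X\circ g\circ j_X=j_X$; injectivity then gives $g\circ j_X=1_{VX}$ immediately. So your detour through order-reflection is valid but unnecessary --- plain set-theoretic injectivity already does the job. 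The rest of your argument (the passage to $\Reps{\thU}$ via Proposition~\ref{d:prop:1} and the composition with the evaluation cone to obtain $[0,1]^\op$-cogeneration of $VX$) spells out what the paper leaves implicit, and is fine.
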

\begin{proof}
  Since $j_X$ is injective by
  Corollary~\ref{cor:j-is-sometimes-injective}, the inequality in
  (\ref{item:7}) of Proposition~\ref{prop:j-adjunction} is actually an
  equality; and therefore $j_X$ is a split mono in $\Cats{[0,1]}$.
\end{proof}

We write $\Reps{\thU}_{[0,1]^\op}$ and $\RepDists{\thU}_{[0,1]^\op}$
to denote the full subcategory of $\Reps{\thU}$ respectively
$\RepDists{\thU}$ defined by the $[0,1]^\op$-cogenerated separated
representable $\thU$-categories. Under the conditions of
Corollary~\ref{d:cor:1}, the monad $\mV$ can be restricted to
$\Reps{\thU}_{[0,1]^\op}$ and then $\RepDists{\thU}_{[0,1]^\op}$ is
the Kleisli category for this monad on $\Reps{\thU}_{[0,1]^\op}$.

In the remainder of this section, our arguments use the continuity of
the map $\hom(u,-):[0,1]_e\to[0,1]_e$, for all
$u\in[0,1]$. Unfortunately, this property excludes $\otimes=\wedge$;
which leaves us with only two choices for $\otimes$:
\begin{itemize}
\item $u\otimes v=u*v$ is multiplication, here
  $\hom(u,v)=v\varoslash u$ is truncated division where
  $0\varoslash 0=1$; and
\item $u\otimes v=u\luk v=\max(0,u+v-1)$ is the \L{}ukasiewicz tensor,
  here $\hom(u,v)=1-\max(0,u-v)$.
\end{itemize}

\begin{lemma}\label{lem:simplify-varphi}
  Assume that $\otimes=*$ is multiplication or $\otimes=\luk$ is
  the \L{}ukasiewicz tensor. Then, for every separated representable
  $\thU$-category $X$, every $\Phi:CX\to[0,1]$ in $\FinSup{[0,1]}$ and
  every $x\in X$,
  \[
    \inf_{\psi\in CX}\hom(\psi(x),\Phi(\psi))=\inf_{\psi\in
      CX,\psi(x)=1}\Phi(\psi).
  \]
\end{lemma}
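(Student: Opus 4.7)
The plan is to prove the identity by showing the two inequalities separately, with the reverse inequality being essentially trivial and the forward inequality being the substantive one. The easy direction $\ge$ would follow by noting that when one restricts the infimum on the left to those $\psi \in CX$ with $\psi(x) = 1$, one has $\hom(\psi(x),\Phi(\psi)) = \hom(1,\Phi(\psi)) = \Phi(\psi)$, so the infimum over the smaller class on the right can only be larger.

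For the non-trivial direction $\le$, I would take an arbitrary $\psi \in CX$, set $u = \psi(x)$, and exhibit $\psi' \in CX$ with $\psi'(x) = 1$ and $\Phi(\psi') \le \hom(u, \Phi(\psi))$. The natural candidate is the $u$-power $\psi' = u \pitchfork \psi$, given pointwise by $\psi'(y) = \hom(u, \psi(y))$. To justify that $\psi' \in CX$, I need $\hom(u,-) : [0,1]^\op \to [0,1]^\op$ to be a morphism in $\Reps{\thU}$; this is where the restriction on $\otimes$ enters. For $\otimes = *$ one has $\hom(u,v) = \min(v/u,1)$ when $u \ne 0$ and $\hom(0,-) \equiv 1$, which is continuous in $v$; for $\otimes = \luk$ one has $\hom(u,v) = \min(1, 1-u+v)$, again continuous in $v$. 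So by Proposition~\ref{prop:1}(3) and the fact that $(-)^\op$ is a functor on $\Reps{\thU}$ preserving underlying maps, $\hom(u,-)$ is a morphism $[0,1]^\op \to [0,1]^\op$ in $\Reps{\thU}$, and composing with $\psi$ places $\psi'$ in $CX$.

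With $\psi' \in CX$ in hand, the identity $\psi'(x) = \hom(u,u) = 1$ is immediate. Furthermore, since $\Phi \in \FinSup{[0,1]}$ is in particular a $[0,1]$-functor, and $\psi'$ is the $u$-power of $\psi$ in $CX$, Remark~\ref{d:rem:1} applied to $\Phi$ yields
\[
\Phi(\psi') = \Phi(u \pitchfork \psi) \le u \pitchfork \Phi(\psi) = \hom(u, \Phi(\psi)) = \hom(\psi(x), \Phi(\psi)).
\]
Taking the infimum of the left-hand side over all such $\psi'$ (that is, over all $\psi' \in CX$ with $\psi'(x) = 1$) and then the infimum of the right-hand side over all $\psi \in CX$ delivers the remaining inequality.

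The only potential obstacle is the existence of the $u$-power inside $CX$, and this is precisely why $\otimes = \wedge$ is excluded: there $\hom(u,-)$ fails to be continuous. Notably, I would not need $\Phi$ to preserve $u$-powers on the nose, only the lax inequality of Remark~\ref{d:rem:1}, which holds for every $[0,1]$-functor.
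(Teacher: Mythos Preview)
Your proof is correct and matches the paper's argument exactly: both take $u=\psi(x)$, pass to the $u$-power of $\psi$ (which has value $1$ at $x$), and invoke Remark~\ref{d:rem:1} to obtain $\Phi(\psi\pitchfork u)\le\hom(u,\Phi(\psi))$. Your added justification that $\hom(u,-)$ is a morphism in $\Reps{\thU}$ via Proposition~\ref{prop:1}(3) is a helpful elaboration of what the paper leaves implicit; note only that your direction labels ``$\ge$'' and ``$\le$'' appear swapped relative to the inequalities you actually establish, and that the paper writes the power as $\psi\pitchfork u$ rather than $u\pitchfork\psi$.
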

\begin{proof}
  Clearly, the left-hand side is smaller or equal to the right-hand
  side. Let now $\psi\in CX$ and put $u=\psi(x)$. Then
  $(\psi\pitchfork u)(x)=\hom(u,\psi(x))=1$ and
  \[
    \Phi(\psi\pitchfork
    u)\le\hom(u,\Phi(\psi))=\hom(\psi(x),\Phi(\psi)).\qedhere
  \]
\end{proof}

\begin{proposition}\label{prop:second-inequality}
  Assume that $\otimes=*$ is multiplication or $\otimes=\luk$ is
  the \L{}ukasiewicz tensor. Then, for every separated representable
  $\thU$-category $X$, the second inequality of
  Proposition~\ref{prop:j-adjunction} is actually an equality.
\end{proposition}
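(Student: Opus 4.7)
The plan is to establish the reverse inequality $\Phi(\psi) \le \sup_{x \in X}\psi(x) \otimes \varphi(x)$, where $\varphi(x) = \inf_{\psi' \in CX}\hom(\psi'(x), \Phi(\psi'))$, by adapting the compactness argument from the proof of Proposition~\ref{prop:Phi-less-or-equal}. By Lemma~\ref{lem:simplify-varphi}, we can rewrite $\varphi(y) = \inf_{\psi'(y) = 1}\Phi(\psi')$, so for each $\delta > 0$ and each $y \in X$ we may choose $\psi'_y \in CX$ with $\psi'_y(y) = 1$ and $\Phi(\psi'_y) < \varphi(y) + \delta$.

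For fixed $\alpha \in (0,1)$ and $\delta > 0$, I construct an open cover of $X$ tailored to the desired inequality. The open set $W = \{w \in X : \alpha \otimes \psi(w) < \delta\}$ handles the region where $\alpha \otimes \psi$ is negligible. For each $z$ in the compact complement $K = X \setminus W$ (which automatically has $\psi(z) > 0$), the set $V_z = \{w \in X : \alpha \otimes \psi(w) < \psi(z) \otimes \psi'_z(w)\}$ is open by continuity of $\psi$, $\psi'_z$, and $\otimes$; moreover $z \in V_z$, since $\alpha \otimes \psi(z) < \psi(z) = \psi(z) \otimes \psi'_z(z)$ holds whenever $\alpha < 1$ and $\psi(z) > 0$, using that both $*$ and $\luk$ are strictly increasing in each factor when the other is positive. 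Extracting from $\{W\} \cup \{V_z : z \in K\}$ a finite subcover $W, V_{z_1}, \ldots, V_{z_n}$ yields the pointwise inequality $\alpha \otimes \psi \le \delta \vee \bigvee_{j=1}^n \psi(z_j) \otimes \psi'_{z_j}$ in $CX$.

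Applying $\Phi$, which is monotone, preserves binary suprema and copowers (as it lies in $\FinSup{[0,1]}$), and satisfies $\Phi(\delta) = \delta \otimes \Phi(1) \le \delta$, gives
\[
\alpha \otimes \Phi(\psi) \le \delta \vee \bigvee_{j=1}^n \psi(z_j) \otimes \Phi(\psi'_{z_j}) \le \delta \vee \bigvee_{j=1}^n \psi(z_j) \otimes (\varphi(z_j) + \delta).
\]
For $\otimes = *$ and $\otimes = \luk$ a short check gives $\psi(z_j) \otimes (\varphi(z_j) + \delta) \le \psi(z_j) \otimes \varphi(z_j) + \delta$, so $\alpha \otimes \Phi(\psi) \le \sup_{y \in X}\psi(y) \otimes \varphi(y) + \delta$. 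Letting $\delta \to 0$ and then $\alpha \to 1$ (using $\alpha \otimes \Phi(\psi) \to \Phi(\psi)$ for both tensors) yields the desired bound.

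The main technical point is that the cover $V_z$ must be defined using the scalar $\psi(z)$ alongside the function $\psi'_z$, rather than merely the support $\{\psi'_z > \alpha\}$ as in the ordered case, so that the $\psi(z_j)$ factor survives when $\Phi$ is applied. The restriction to $\otimes = *$ or $\otimes = \luk$ enters through Lemma~\ref{lem:simplify-varphi}, through the strict monotonicity needed to ensure $z \in V_z$, and through the approximate additivity used to absorb the $\delta$ error.
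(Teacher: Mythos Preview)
Your argument is correct and follows the same compactness-plus-Lemma~\ref{lem:simplify-varphi} strategy as the paper, but the execution differs in an interesting way. The paper fixes $u>u_0$ and, after extracting the finite cover, introduces auxiliary \emph{non-continuous} indicator-type functions $\widehat{\psi}_i$ (constant $1$ on $U_\alpha(\psi_i)$, constant $0$ on $D_i$) in order to manufacture scalar weights $w_i=\sup_x\widehat{\psi}_i(x)\otimes\psi(x)$ satisfying $w_i\otimes\Phi(\psi_i)\le u$ and $\alpha\otimes\psi\le\bigvee_i w_i\otimes\psi_i$. You bypass this device entirely by building the scalar $\psi(z)$ directly into the definition of the covering set $V_z$, so that the pointwise inequality $\alpha\otimes\psi\le\delta\vee\bigvee_j\psi(z_j)\otimes\psi'_{z_j}$ already lies in $CX$ and $\Phi$ can be applied immediately. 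The price is the extra $\delta$-bookkeeping and the Lipschitz-type estimate $a\otimes(b+\delta)\le a\otimes b+\delta$; the paper's route avoids this by working with a single threshold $u$ throughout. Both arguments are equally valid.

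One small correction: the justification ``both $*$ and $\luk$ are strictly increasing in each factor when the other is positive'' is false for $\luk$ (e.g.\ $\alpha\mapsto\alpha\luk 0.3$ is constant $0$ on $[0,0.7]$). What you actually need, and what does hold, is the weaker fact that $\alpha\otimes c<c$ whenever $\alpha<1$ and $c>0$; this is immediate for both tensors by a direct case check.
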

\begin{proof}
  Let $\Phi:CX\to[0,1]$ in $\FinSup{[0,1]}$ and $\psi\in CX$. Put
  $u_0=\bigvee_{x\in X}\psi(x)\otimes\inf_{\psi'\in
    CX}\hom(\psi'(x),\Phi(\psi'))$ and consider $u_0<u$. Let $x\in
  X$. Then
  \[
    u>\psi(x)\otimes\inf_{\psi'\in
      CX}\hom(\psi'(x),\Phi(\psi'))=\inf_{\psi'\in
      CX,\psi'(x)=1}\psi(x)\otimes\Phi(\psi'),
  \]
  hence there is some $\psi'\in CX$ with $\psi'(x)=1$ and
  \[
    \psi(x)\otimes\Phi(\psi')<u.
  \]
  Let now $\alpha<1$. For every $\psi'\in CX$, we put
  \[
    U_\alpha(\psi')=\{x\in X\mid
    \psi(x)\otimes\Phi(\psi')<u\}\cap\{x\in X\mid\psi'(x)>\alpha\}.
  \]
  Then $U_\alpha(\psi')$ is open in the compact Hausdorff topology of
  $X$, and
  \[
    X=\bigcup_{\psi'\in CX}U_\alpha(\psi').
  \]
  Since $X$ is compact, we find $\psi_1,\dots,\psi_n$ so that
  \[
    X=U_\alpha(\psi_1)\cup\dots\cup U_\alpha(\psi_n).
  \]
  For every $i\in\{1,\dots,n\}$ we put
  $D_i=\{x\in X\mid \psi(x)\otimes\Phi(\psi_i)\ge u\}$, then
  $U_\alpha(\psi_i)\cap D_i=\varnothing$.  Let
  $\widehat{\psi}_i:X\to[0,1]$ be a function (not necessarily a
  morphism) which is constant $1$ on $U_\alpha(\psi_i)$ and constant
  $0$ on $D_i$. Then, for all $x\in X$,
  \[
    \alpha\otimes\psi(x)
    \le(\widehat{\psi}_1(x)\otimes\psi_1(x)\otimes\psi(x))\vee\dots\vee(\widehat{\psi}_n(x)\otimes\psi_n(x)\otimes\psi(x)).
  \]
  For every $i\in\{1,\dots,n\}$ we put
  $w_i=\sup_{x\in X}\widehat{\psi}_i(x)\otimes\psi(x)$; with the
  inequality above we get
  \[
    \alpha\otimes\psi\le (w_1\otimes\psi_1)\vee\dots\vee (w_n\otimes
    \psi_n).
  \]
  Let now $i\in\{1,\dots,n\}$. Then, for every $x\in X$,
  \[
    \widehat{\psi}_i(x)\otimes\psi(x)\otimes\Phi(\psi_i)\le u,
  \]
  and therefore $w_i\otimes\Phi(\psi_i)\le u$. Consequently,
  $\alpha\otimes\Phi(\psi)\le u$ for all $\alpha<1$ and $u>u_0$; which
  implies $\Phi(\psi)\le u_0$.
\end{proof}

From Proposition~\ref{prop:second-inequality} we obtain immediately:

\begin{theorem}\label{thm:enriched-C-ff}
  For $\otimes=*$ multiplication or $\otimes=\luk$ the \L{}ukasiewicz
  tensor, the functor
  \[
    C:\RepDists{\thU}_{[0,1]^\op}\longrightarrow\FinSup{[0,1]}^\op
  \]
  is fully faithful.
\end{theorem}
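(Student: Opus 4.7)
The plan is to apply Theorem~\ref{thm:Kleisli-adjunction}: the functor $C:\RepDists{\thU}_{[0,1]^\op}\to\FinSup{[0,1]}^\op$ is fully faithful precisely when the induced monad morphism $j$ between $\mV$ and the monad $\mD$ generated by the adjunction $C\dashv G$ is a (componentwise) isomorphism. So it suffices to prove, for every $[0,1]^\op$-cogenerated separated representable $\thU$-category $X$, that the map
\[
  j_X:VX\longrightarrow \FinSup{[0,1]}(CX,[0,1])
\]
given by~\eqref{formula.metric_monad_morphism} is bijective; since $j_X$ is always a morphism in $\Reps{\thU}$, bijectivity will automatically promote it to an isomorphism there (recalling from Corollary~\ref{d:cor:1} that $VX$ is again $[0,1]^\op$-cogenerated, so that the induced structures are determined by the evaluations).

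For injectivity, both $\otimes=*$ and $\otimes=\luk$ satisfy the hypotheses of Corollary~\ref{cor:j-is-sometimes-injective}, whence $j_X$ is already injective for any $[0,1]^\op$-cogenerated $X$. In fact, Corollary~\ref{d:cor:1} strengthens this to $j_X$ being a $\Cats{[0,1]}$-embedding, which is exactly the statement that the first inequality of Proposition~\ref{prop:j-adjunction} is an equality. Consequently, the assignment
\[
  \Phi\longmapsto\varphi,\qquad \varphi(x)=\inf_{\psi\in CX}\hom(\psi(x),\Phi(\psi)),
\]
provides a section of $j_X$ at the level of the underlying ordered sets.

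For surjectivity, given $\Phi:CX\to[0,1]$ in $\FinSup{[0,1]}$, define $\varphi$ by the formula above; by the discussion preceding Proposition~\ref{prop:j-adjunction}, $\varphi$ lies in $VX$. Proposition~\ref{prop:second-inequality}, whose proof uses Lemma~\ref{lem:simplify-varphi} and hence the continuity of $\hom(u,-):[0,1]_e\to[0,1]_e$, upgrades the second inequality of Proposition~\ref{prop:j-adjunction} to an equality, so that
\[
  j_X(\varphi)(\psi)=\sup_{x\in X}\psi(x)\otimes\varphi(x)=\Phi(\psi)
\]
for every $\psi\in CX$. Hence $j_X(\varphi)=\Phi$, and $j_X$ is surjective. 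Combined with injectivity, $j_X$ is a bijective embedding, i.e.\ an isomorphism; naturality being automatic, $j$ is a monad isomorphism and Theorem~\ref{thm:Kleisli-adjunction} delivers the claim.

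The only non-trivial work is therefore hidden in Proposition~\ref{prop:second-inequality} (already proved above): it is there that one needs a compact-cover argument in the underlying compact Hausdorff space of $X$ together with $u$-powers $\psi\pitchfork u=\hom(u,\psi(-))$ that are themselves morphisms in $\Reps{\thU}$, and this is exactly what forces $\hom(u,-)$ to be continuous with respect to the Euclidean topology on $[0,1]$. This restricts us to the two continuous t-norms $\otimes=*$ and $\otimes=\luk$, and explains why $\otimes=\wedge$ is excluded despite $j_X$ being injective there as well.
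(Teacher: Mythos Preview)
Your proposal is correct and follows essentially the same route as the paper, which simply records the theorem as an immediate consequence of Proposition~\ref{prop:second-inequality}; you have merely unpacked that word ``immediately'' by making explicit the reduction to $j_X$ being an isomorphism, invoking Corollary~\ref{cor:j-is-sometimes-injective} (and Corollary~\ref{d:cor:1}) for injectivity/embedding and Proposition~\ref{prop:second-inequality} for surjectivity. One small remark: the equivalence ``$C$ is fully faithful iff $j$ is componentwise an isomorphism'' is not stated as such in Theorem~\ref{thm:Kleisli-adjunction} but is the standard Yoneda consequence of the factorisation $\catX_\mT\to\catX_\mD\to\catA^\op$, so you might want to say this explicitly rather than attribute it to that theorem.
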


Our next aim is to identify those morphisms in $\FinSup{[0,1]}$ which
correspond to ordinary relations between partially ordered compact
spaces on the other side. Recall from
Remark~\ref{rem:tensor-in-metric-case} that, for $X$ being a separated
ordered compact Hausdoff space, we can still consider
$\psi_1\otimes\psi_2$ in $CX$.

\begin{proposition}
  For $\otimes=*$ being multiplication or $\otimes=\luk$ the
  \L{}ukasiewicz tensor, a morphism $\varphi:X\modto Y$ in
  $\RepDists{\thU}$ between partially ordered compact spaces is in
  $\POSCHDIST$ if and only if $C\varphi$ satisfies \LTen.
\end{proposition}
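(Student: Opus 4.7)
The plan is to prove the two implications separately. The forward direction is a direct computation: if $\varphi \colon X \times Y \to [0,1]$ takes only the values $0$ and $1$, then for every $x \in X$ and every $\psi_1, \psi_2 \in CY$, writing $Y_x = \{y \in Y \mid \varphi(x,y) = 1\}$, one has
\[
  C\varphi(\psi_1 \otimes \psi_2)(x) = \sup_{y \in Y_x} \psi_1(y) \otimes \psi_2(y) \le \left(\sup_{y \in Y_x} \psi_1(y)\right) \otimes \left(\sup_{y \in Y_x} \psi_2(y)\right) = C\varphi(\psi_1)(x) \otimes C\varphi(\psi_2)(x),
\]
where the inequality uses that $\otimes$ is monotone in both arguments. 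This is precisely \LTen.

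For the converse I would argue by contradiction. Assume $C\varphi$ satisfies \LTen{} and suppose there exist $x_0 \in X$ and $y_0 \in Y$ with $\varphi(x_0, y_0) = u$ for some $0 < u < 1$. From the equivalence $\STCOMP \simeq \POSCH$ and the discrete/forgetful adjunction between $\Reps{\thU}$ and $\POSCH$ recalled in Section~\ref{sec:metr-comp-hausd}, the underlying set of $VY$ for $Y \in \POSCH$ is $\POSCH(Y, [0,1]_o)$; in particular $\varphi(x_0, -) \colon Y \to [0,1]$ is monotone and continuous into $[0,1]_o$. Therefore, for every $v \in (u, 1]$, the set $A_v = \{y \in Y \mid \varphi(x_0, y) \ge v\}$ is a closed upper subset of $Y$, and by monotonicity of $\varphi(x_0, -)$ it is disjoint from the closed lower set $\downc y_0$ (the latter is closed by Proposition~\ref{comp_closed}).

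Applying Proposition~\ref{prop:orderUri} to $A_v$ and $\downc y_0$ and passing to $1 - (-)$ yields, whenever $A_v \ne \varnothing$, a function $\psi_v \in CY$ with $\psi_v(y_0) = 1$ and $\psi_v \equiv 0$ on $A_v$; if $A_v = \varnothing$ one simply takes $\psi_v$ to be the constant function $1$. In either case $\psi_v(y) \otimes \varphi(x_0, y) \le v$ for every $y \in Y$ (since outside $A_v$ we have $\varphi(x_0, y) < v$ and $\psi_v(y) \le 1$, while on $A_v$ the function $\psi_v$ vanishes), so $C\varphi(\psi_v)(x_0) \le v$, while evaluation at $y_0$ gives $C\varphi(\psi_v \otimes \psi_v)(x_0) \ge (1 \otimes 1) \otimes u = u$. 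By \LTen{} applied at $x_0$ we obtain $u \le v \otimes v$, which reads $u \le v^2$ when $\otimes = *$ and $u \le \max(0, 2v - 1)$ when $\otimes = \luk$. Letting $v \downarrow u$ produces $u \le u \otimes u$, a condition violated by every $u \in (0,1)$ for both admissible tensors, the desired contradiction.

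The main delicate point I anticipate is the structural identification needed for Proposition~\ref{prop:orderUri} to apply: one must confirm that $\varphi(x_0, -)$ is monotone continuous into $[0,1]_o$ (not $[0,1]^\op$), so that the level sets $A_v$ are closed upper, and that $CY$ does contain the antitone continuous functions produced from Urysohn--Nachbin via the transformation $1 - (-)$. The remainder of the argument is the routine check that $u \le u \otimes u$ has no solution in $(0,1)$ for either the multiplication or the \L{}ukasiewicz tensor.
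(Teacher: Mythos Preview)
Your argument is essentially correct and reaches the same key inequality $u\le u\otimes u$, but one claim needs adjusting: the underlying set of $VY$ for $Y\in\POSCH$ is \emph{not} $\POSCH(Y,[0,1]_o)$. An element of $VY$ is a $\thU$-functor $Y\to[0,1]$, and for $Y$ discretely embedded from its stably compact topology this amounts to a monotone map that is only upper semicontinuous with respect to the compact Hausdorff topology (the underlying topology of the $\thU$-category $[0,1]$ has closed sets $[v,1]$). Fortunately this weaker property is exactly what you need: it still gives that each $A_v=\{y\mid\varphi(x_0,y)\ge v\}$ is a closed upper subset, so Proposition~\ref{prop:orderUri} applies and the rest of your argument goes through unchanged.

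Your route differs from the paper's. The paper first reduces to the case $\varphi:1\modto X$ (composing with evaluation at $x_0$), then invokes the formula $\varphi(x)=\inf_{\psi(x)=1}\Phi(\psi)$ of Lemma~\ref{lem:simplify-varphi} together with the fact that $u\otimes-$ preserves non-empty infima to compute directly
\[
  \varphi(x)\otimes\varphi(x)
  =\inf_{\psi(x)=1,\psi'(x)=1}\Phi(\psi)\otimes\Phi(\psi')
  \ge\inf_{\psi(x)=1,\psi'(x)=1}\Phi(\psi\otimes\psi')
  =\inf_{\psi(x)=1}\Phi(\psi)=\varphi(x),
\]
with no limiting step. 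Your version is more self-contained: it bypasses the infimum formula and the surrounding machinery (Theorem~\ref{thm:enriched-C-ff}, Lemma~\ref{lem:simplify-varphi}) in favour of an explicit Urysohn--Nachbin construction and a continuity argument $v\downarrow u$. The paper's proof is shorter precisely because it cashes in on that earlier work; yours would stand on its own even without it.
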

\begin{proof}
  Clearly, if $\varphi:X\modto Y$ is in $\POSCHDIST$, then $C\varphi$
  satisfies \LTen. To see the reverse implication, note first that
  $u\in\{0,1\}$ precisely when $u\le u\otimes u$. It is enough to
  consider the case $\varphi:1\modto X$, and assume now that the
  corresponding $\Phi:CX\to[0,1]$ satisfies \LTen. Then, for all
  $x\in X$,
  \begin{align*}
    \varphi(x)\otimes\varphi(x)
    &=\left(\inf_{\psi\in CX,\psi(x)=1}\Phi(\psi)\right)\otimes\left(\inf_{\psi'\in CX,\psi'(x)=1}\Phi(\psi')\right)\\
    &=\inf_{\substack{\psi\in CX,\psi(x)=1\\ \psi'\in CX,\psi'(x)=1}}\Phi(\psi)\otimes\Phi(\psi')\\
    &\ge\inf_{\substack{\psi\in CX,\psi(x)=1\\ \psi'\in CX,\psi'(x)=1}}\Phi(\psi\otimes\psi')\\
    &=\inf_{\psi\in CX,\psi(x)=1}\Phi(\psi)=\varphi(x).\qedhere
  \end{align*}
\end{proof}

The proposition above together with Theorem~\ref{thm:enriched-C-ff} is
certainly related to Theorem~\ref{thm:full-embedding-1}; however, in
Section~\ref{sec:duality-distributors} we consider finitely
cocomplete $\V$-categories equipped with an \emph{additional} monoid 
structure which is not needed in this section. In fact,
Theorem~\ref{thm:enriched-C-ff} allows us to characterise the
multiplication $\otimes$ of $CX$ within $\FinSup{[0,1]}$.

\begin{lemma}
  Assume that $\otimes=*$ is multiplication or $\otimes=\luk$ is
  the \L{}ukasiewicz tensor. Let $X$ be a partially ordered compact
  space and let $\psi_0\in CX$. Let $\Phi:CX\to CX$ in
  $\FinSup{[0,1]}$ with $\Phi(1)\le\psi_0$ and $\Phi(\psi)\le\psi$,
  for all $\psi\in CX$. Then $\Phi=\psi_0\otimes-$ provided that
  $\psi_0\otimes\psi\le\Phi(\psi)$, for all $\psi\in CX$.
\end{lemma}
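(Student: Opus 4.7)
The plan is to translate the functor-level data on $\Phi$ into distributor-level data on $X$, and then constrain the distributor pointwise using the hypotheses. Since $X$ is a partially ordered compact space, it is $[0,1]^\op$-cogenerated, and Theorem~\ref{thm:enriched-C-ff} provides a unique continuous $[0,1]$-distributor $\varphi : X \modto X$ with
\[
  \Phi(\psi)(x) = \sup_{y \in X} \varphi(x,y) \otimes \psi(y)
\]
for all $\psi \in CX$ and $x \in X$. The three hypotheses translate into $\sup_y \varphi(x,y) \le \psi_0(x)$; $\sup_y \varphi(x,y) \otimes \psi(y) \le \psi(x)$ for every $\psi \in CX$; and the reverse of the identity we are trying to prove. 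Only the upper bound $\Phi(\psi) \le \psi_0 \otimes \psi$ still needs to be established.

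Writing $a_0(x,y) = 1$ if $x \le y$ and $0$ otherwise, and observing that $\sup_{y \ge x}\psi(y) = \psi(x)$ for any $\psi \in CX$ (since $\psi : X \to [0,1]^\op$ is monotone, hence antitone in the usual order of $[0,1]$), the whole statement reduces to the pointwise bound $\varphi(x,y) \le \psi_0(x) \otimes a_0(x,y)$. The factor $\psi_0(x)$ is already controlled by the translation of $\Phi(1) \le \psi_0$, so the crux is the nullity claim
\[
  x \not\le y \;\Longrightarrow\; \varphi(x,y) = 0.
\]

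The hard part will be this nullity, which I would obtain by producing a separating $\psi \in CX$. If $x \not\le y$, then $\upc x$ and $\downc y$ are disjoint closed upper and lower subsets of $X$, so Proposition~\ref{prop:orderUri} yields a monotone continuous $\tilde\psi : X \to [0,1]$ that is $1$ on $\upc x$ and $0$ on $\downc y$. Its pointwise complement $\psi := 1 - \tilde\psi$ then lies in $CX$ and satisfies $\psi(x) = 0$ and $\psi(y) = 1$. Plugging $\psi$ into the translated inequality $\sup_z \varphi(x,z) \otimes \psi(z) \le \psi(x) = 0$ and specialising at $z = y$ forces $\varphi(x,y) \otimes 1 = \varphi(x,y) \le 0$, since $1$ is the neutral element of the quantale. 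Combining the two bounds on $\varphi(x,y)$ with the identity $\sup_{y \ge x}\psi(y) = \psi(x)$ yields $\Phi(\psi)(x) \le \psi_0(x) \otimes \psi(x)$, and together with the assumed $\psi_0 \otimes \psi \le \Phi(\psi)$ this completes the proof. A minor preliminary check is that a partially ordered compact space is $[0,1]^\op$-cogenerated in the sense demanded by Theorem~\ref{thm:enriched-C-ff}, which is immediate from Propositions~\ref{prop:initial-cogenerator} and~\ref{d:prop:1}.
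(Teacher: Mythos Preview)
Your proof is correct and follows essentially the same route as the paper: invoke Theorem~\ref{thm:enriched-C-ff} to replace $\Phi$ by a continuous $[0,1]$-distributor $\varphi$, then use an order-Urysohn separation (the paper cites Proposition~\ref{prop:initial-cogenerator}, you cite Proposition~\ref{prop:orderUri} and pass to the complement) to force $\varphi(x,y)=0$ whenever $x\not\le y$. The only cosmetic difference is that the paper computes $\varphi$ exactly (showing $\varphi(x,y)=\psi_0(x)$ for $y\ge x$ via the explicit $\inf$--$\hom$ formula), whereas you establish only the upper bound $\varphi(x,y)\le\psi_0(x)\otimes a_0(x,y)$ and then appeal to the hypothesis $\psi_0\otimes\psi\le\Phi(\psi)$ for the other inequality.
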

\begin{proof}
  Let $x\in X$ and consider
  \[
    CX\xrightarrow{\;\Phi\,}CX\xrightarrow{\,\ev_x\,}[0,1]
  \]
  in $\FinSup{[0,1]}$. By Theorem~\ref{thm:enriched-C-ff}, this arrow
  corresponds to the continuous $[0,1]$-distributor
  $\varphi:1\modto X$ given by
  \[
    \varphi(y)=\inf_{\psi\in CX}\hom(\psi(y),\Phi(\psi)(x)),
  \]
  for all $y\in X$. Let now $y\in X$, we consider the following two
  cases.
  \begin{description}
  \item[$y\not\ge x$] By Proposition~\ref{prop:initial-cogenerator},
    there exists some $\psi\in CX$ with $\psi(y)=1$ and
    $\psi(x)=0$. Since $\Phi(\psi)\le\psi$, we obtain
    $\Phi(\psi)(x)\le\psi(x)=0$; hence $\varphi(y)=0$.
  \item[$y\ge x$] Firstly, for every $\psi\in CX$,
    \[
      \hom(\psi(y),\Phi(\psi)(x))\ge\hom(\psi(x),\Phi(\psi)(x))\ge\psi_0(x)
    \]
    since $\psi_0(x)\otimes\psi(x)\le\Phi(\psi)(x)$. Secondly,
    $\hom(1,\Phi(1)(x))\le\psi_0(x)$. Therefore
    $\varphi(y)=\psi_0(x)$.
  \end{description}
  Finally, we obtain
  \[
    \Phi(\psi)(x)=\sup_{y\in X}(\varphi(y)\otimes\psi(y))=\sup_{y\ge
      x}(\psi_0(x)\otimes\psi(y))=\psi_0(x)\otimes\psi(x),
  \]
  for all $\psi\in CX$.
\end{proof}

\begin{theorem}
  Assume that $\otimes=*$ is multiplication or $\otimes=\luk$ is
  the \L{}ukasiewicz tensor. Let $X$ be a partially ordered compact
  space. Then, for every $\psi_0\in CX$, $\psi_0\otimes-:CX\to CX$ is
  the largest morphism $\Phi:CX\to CX$ in $\FinSup{[0,1]}$ satisfying
  \begin{align}\label{eq:tens-properties}
    \Phi(1)\le\psi_0 &&\text{and}&& \Phi(\psi)\le\psi,\text{ for all }\psi\in CX.
  \end{align}
\end{theorem}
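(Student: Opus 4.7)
The plan is to deduce the theorem from the immediately preceding lemma by a symmetrisation argument. First I would verify that $\psi_0\otimes-\colon CX\to CX$ does satisfy the two conditions in \eqref{eq:tens-properties} and is a morphism in $\FinSup{[0,1]}$. Both are immediate: since $\otimes$ is a quantale multiplication, $\psi_0\otimes-$ preserves arbitrary suprema and commutes with the action by $[0,1]$ pointwise, hence is finitely cocontinuous; and $\psi_0\otimes 1=\psi_0$, while $\psi_0\otimes\psi\le 1\wedge\psi=\psi$ by the standing inequality $u\otimes v\le u\wedge v$ (Assumption~\ref{ass:general-assumption}).

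For the maximality statement, let $\Phi\colon CX\to CX$ be any morphism in $\FinSup{[0,1]}$ satisfying \eqref{eq:tens-properties}. The key idea is to enlarge $\Phi$ to
\[
  \Phi'(\psi) \;=\; \Phi(\psi)\,\vee\,(\psi_0\otimes\psi),
\]
where the supremum is computed pointwise in $CX$, and then invoke the preceding lemma.

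I would then check, as the main verification, that $\Phi'$ is still a morphism of $\FinSup{[0,1]}$. Preservation of the bottom element and of binary suprema is routine using that both $\Phi$ and $\psi_0\otimes-$ are finitely cocontinuous together with associativity and commutativity of $\vee$. Preservation of copowers is the point where the quantale structure is genuinely used: for $u\in[0,1]$,
\[
  \Phi'(\psi\otimes u) \;=\; (\Phi(\psi)\otimes u)\vee\bigl((\psi_0\otimes\psi)\otimes u\bigr) \;=\; \bigl(\Phi(\psi)\vee(\psi_0\otimes\psi)\bigr)\otimes u \;=\; \Phi'(\psi)\otimes u,
\]
where the middle equality uses that $-\otimes u$ preserves binary suprema in the quantale $[0,1]$ and hence pointwise in $CX$.

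Finally, I would observe that $\Phi'$ meets all three hypotheses of the preceding lemma: $\Phi'(1)=\Phi(1)\vee\psi_0\le\psi_0$ since $\Phi(1)\le\psi_0$; $\Phi'(\psi)\le\psi$ since both $\Phi(\psi)\le\psi$ and $\psi_0\otimes\psi\le\psi$; and $\psi_0\otimes\psi\le\Phi'(\psi)$ by construction. The lemma then forces $\Phi'=\psi_0\otimes-$, so that $\Phi(\psi)\le\Phi'(\psi)=\psi_0\otimes\psi$ for every $\psi\in CX$, which is exactly the claimed maximality. The only real subtlety lies in Step 1 above (that $\Phi'$ stays in $\FinSup{[0,1]}$); everything else is bookkeeping on top of the lemma.
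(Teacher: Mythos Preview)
Your argument is correct and follows essentially the same approach as the paper: both proofs form the pointwise join $\Phi\vee(\psi_0\otimes-)$, observe it still lies in $\FinSup{[0,1]}$ and satisfies \eqref{eq:tens-properties}, and then invoke the preceding lemma. The only cosmetic difference is that the paper phrases this as ``the collection of such $\Phi$ is directed, and the lemma says $\psi_0\otimes-$ is maximal, hence largest'', whereas you apply the lemma directly to the single join $\Phi'$; your route is arguably more explicit.
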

\begin{proof}
  Clearly, $\psi_0\otimes-$ satisfies \eqref{eq:tens-properties}, and
  the lemma above tells us already that it is maximal among all those
  maps. Let now $\Phi_1,\Phi_2:CX\to CX$ be in $\FinSup{[0,1]}$
  satisfying \eqref{eq:tens-properties}. Then also the composite arrow
  \[
    CX\xrightarrow{\,\langle \Phi_1,\Phi_2\rangle\,}CX\times
    CX\xrightarrow{\,\vee\,}CX
  \]
  satisfies \eqref{eq:tens-properties}, therefore the collection of
  all morphism $\Phi:CX\to CX$ in $\FinSup{[0,1]}$ satisfying
  \eqref{eq:tens-properties} is directed. Consequently,
  $\psi_0\otimes-:CX\to CX$ is the largest such morphism.
\end{proof}


\end{document}